\DeclareMathOperator{\Dist}{dist}
\DeclareMathOperator{\Div}{div}
\newcommand{\R}{\mathbb{R}}
\newcommand{\EPS}{\varepsilon}
\newcommand{\EMP}{\emptyset}
\newcommand{\Ann}{\mathfrak{A}}
\newcommand{\ColorWord}[2]{\color{#1} #2 \color{black} }
\newcommand{\Dt}{\mathcal{D}_t^{\frac{1}{2}}}
\numberwithin{equation}{section}
\theoremstyle{plain}
\newtheorem{thm}[equation]{Theorem}
\newcommand{\refthm}[1]{\emph{\ColorWord{blue}{Theorem} \ref{#1}}}
\newtheorem{lemma}[equation]{Lemma}
\newcommand{\reflemma}[1]{\emph{\ColorWord{blue}{Lemma} \ref{#1}}}
\newtheorem{prop}[equation]{Proposition}
\newcommand{\refprop}[1]{\emph{\ColorWord{blue}{Proposition} \ref{#1}}}
\newtheorem{cor}[equation]{Corollary}
\newcommand{\refcor}[1]{\emph{\ColorWord{blue}{Corollary} \ref{#1}}}
\theoremstyle{definition}
\newtheorem{defin}[equation]{Definition}
\newcommand{\refdef}[1]{\emph{\ColorWord{blue}{Definition} \ref{#1}}}
\theoremstyle{remark}
\newtheorem{rem}[equation]{Remark}
\newcommand{\refrem}[1]{\textit{Remark \ref{#1}}}
\newtheoremstyle{named}{}{}{\itshape}{}{\bfseries}{}{.5em}{#1 #3}
\theoremstyle{named}
\title{Perturbation theory for the parabolic Regularity and Neumann problem}
\author{Martin Ulmer}
\date{\today}
\begin{document}

\maketitle

\begin{abstract}
\noindent
We show small and large Carleson perturbation results for the parabolic Regularity boundary value problem with boundary data in \(\dot{L}_{1,1/2}^p\) and small Carelson perturbation results for the Neumann problem with boundary data in \(L^p\). The operator we consider is \(L:=\partial_t -\mathrm{div}(A\nabla\cdot)\) and the domains are parabolic cylinders \(\Omega=\mathcal{O}\times\mathbb{R}\), where \(\mathcal{O}\) is a Lipschitz domain.

\end{abstract}

\tableofcontents 
\bigskip

\section{Introduction}

In this work we study a class of linear parabolic operators with time-varying coefficients. We prove that if such an operator solves the Regularity or the Neumann boundary value problem, then perturbations of this operator also solve the Regularity or Neumann boundary value problem respectively. We measure these perturbations in terms of a well-studied Carleson norm on the difference of the coefficients (see \eqref{eq:CarlesonWithSupNorm}). Our results address both small and large perturbations and go beyond the already established perturbation theory for the Dirichlet boundary value problem. 
\smallskip

Boundary value problems for elliptic and parabolic operators with rough boundary data have been studied extensively over the last few decades. The question of perturbation theory, we address in this work, goes back to Dahlberg in \cite{dahlberg_absolute_1986}. In this article, Dahlberg considers two symmetric elliptic operators \(L_0=\mathrm{div}(A_0\nabla\cdot)\) and \(L_1=\mathrm{div}(A_1\nabla\cdot)\) and supposes that the \(L^2\) Dirichlet problem is solvable for the "good" operator \(L_0\). He poses the question under what condition on the difference between \(A_0\) and \(A_1\) we can conclude that \(L_1\) is also a "good" operator in the sense that the \(L^2\) Dirichlet problem is also solvable for \(L_1\). He introduces a vanishing Carleson condition on the difference \(\EPS:=|A_1-A_0|\) and proves the posed perturbation question. However, this first positive result gave rise to a lot of new questions. In particular, is Dahlberg's vanishing Carleson condition the best perturbative condition on closeness of \(A_0\) and \(A_1\) that allows to transfer \(L^p\) solvability from one operator to the other one and what is the situation if we merely want solvability of the \(L^q\) Dirichlet problem for \(L_1\), where \(p\leq q<\infty\)? These questions were addressed in \cite{fefferman_criterion_1989} and \cite{fefferman_theory_1991}, and the answer is also given in terms of a Carleson measure condition on the discrepancy function \(\EPS\). Interestingly, there are two types of results, which we generally refer to as large Carleson perturbation and small Carleson perturbation results. First, if the Carleson measure involving \(\EPS\) has large Carleson norm, then solvability of the Dirichlet problem with boundary data in \(L^p\) for \(L_0\) implies solvability of the Dirichlet problem with boundary data in \(L^q\) for \(L_1\), where \(q\) is some number potentially larger than \(p\). Second, if the Carleson measure norm is sufficiently small depending on the given \(p\), then \(L^p\) solvability for \(L_0\) yields \(L^p\) solvability for \(L_1\). Hence, in this case, \(L^p\) solvability is preserved, as was in \cite{dahlberg_absolute_1986} for \(L^2\).
\smallskip

Since then, perturbation theory has experienced much attention and extensions to rougher domains, non symmetric operators, other boundary value problems, and the parabolic setting (refer to Section \ref{subsection:BVPs} for definitions of the boundary value problems). For the elliptic Dirichlet problem we have extensions of the small and the large perturbation results (cf. \cite{milakis_harmonic_2013}, \cite{hofmann_extrapolation_2012} \cite{cavero_perturbations_2019}, \cite{cavero_perturbations_2020}, \cite{akman_perturbation_2023}, \cite{dindos_perturbation_2023}, \cite{feneuil_generalized_2022}) to very rough domains and non symmetric operators. For the elliptic Regularity problem, \cite{kenig_neumann_1995} establishes small and large perturbation results and both could also be extended to rough settings and non symmetric operators (cf. \cite{dai_carleson_2022}). However, in contrast to that, for the elliptic Neumann problem there are currently only small perturbation results for symmetric operators and only on Lipschitz domains. This result was also established in \cite{kenig_neumann_1995}.
\smallskip

For the parabolic problems, we have fewer perturbation results. So far, only the Dirichlet boundary value problem has affirmative answers to the perturbation question. We have small and large perturbation results, also for non symmetric operators, on Lip(1,1/2) or Lipschitz cylinder domains (cf. \cite{nystrom_dirichlet_1997}, \cite{sweezy_bq_1998}, \cite{hofmann_dirichlet_2001}). We are going to discuss the parabolic Dirichlet perturbation results in more detail in \refprop{prop:ParabolicPerturbationTheoryForDP}. However, there are no perturbation results yet for the parabolic Regularity or Neumann boundary value problem. This is the gap that we address with our main results in this work.
\medskip

In the following we consider the parabolic drift-less PDE
\begin{align}L_iu:=\partial_tu -\Div(A_i(X,t)\nabla u)=0,\label{eq:StandardPDE}\end{align}
for \(i=0,1\), where \(A_i(X,t)\in \mathbb{R}^{n}\times\mathbb{R}^{n}\) are elliptic matrices, i.e. there exists \(\lambda>0\) such that
\begin{align*}\lambda |\xi|^2\leq \xi^T A_i(X,t) \xi\qquad\textrm{for all }\xi\in\mathbb{R}^{n}, \textrm{ and a.e. }(X,t)\in \Omega\subset \mathbb{R}^{n+1}.\end{align*}
Furthermore, we only assume that the coefficients of \(A_i\) are bounded and measurable functions.
Our domains will be parabolic cylinder domains \(\Omega:=\mathcal{O}\times\mathbb{R}\), where \(\mathcal{O}\subset\mathbb{R}^n\) is the spatial component and the last coordinate for \((X,t)\in\Omega\) is the \(t\) component or time component. The closeness condition of \(A_0\) and \(A_1\) is given as
\begin{align}d\mu:=\frac{\sup_{B(X,t,\delta(X,t)/2))}|A_0-A_1|^2}{\delta}dXdt\qquad\textrm{ is a Carleson measure,}\label{eq:CarlesonWithSupNorm}\end{align}
where \(B(X,t,\delta(X,t)/2))\) is a parabolic ball centered at \((X,t)\) with radius \(\delta(X,t)/2\). The main theorem is the following:

\begin{thm}\label{MainTheorem}
    Let \(L_0, L_1\) be two parabolic operators of form \eqref{ParabolicOperator} on the cylinder domain \(\Omega:=\mathcal{O}\times\mathbb{R}\subset \mathbb{R}^{n+1}\) with a Lipschitz domain \(\mathcal{O}\). Assume \eqref{eq:CarlesonWithSupNorm} and that the Regularity problem \((R)^{L_0}_{q_0}\) is solvable for \(q_0>1\). Then following results are true:
    \begin{itemize}
        \item[(S)] There exists a sufficiently small \(\EPS=\EPS(n,\lambda_0,\mathcal{O}, q_0)>0\), such that if \(\Vert \mu \Vert_{\mathcal{C}}\leq \EPS\), then the Regularity problem \((R)^{L_1}_{q_0}\) is solvable.
        \item[(L)] If \(\Vert \mu \Vert_{\mathcal{C}}<\infty\), then the Regularity problem \((R)^{L_1}_{q_1}\) is solvable for some \(1< q_1<q_0\).
    \end{itemize} 
\end{thm}

The first conclusion \((S)\) is a small Carleson perturbation result, while the second conclusion \((L)\) is a large Carleson perturbation result. Hence, we extend both types of perturbative results from the elliptic to the parabolic setting. Our proof generally follows the road map of the elliptic perturbation theory for the Regularity problem in \cite{kenig_neumann_1995}. But due to the parabolic setting we encounter new difficulties, since fundamental properties of solutions, like the Harnack inequality, are time dependent, and the PDE has scaling differences in space and time. For instance, moving poles of the Green's function in the parabolic setting requires much more care and the decay of a solution in time is different from its decay in space. Since the proof of \cite{kenig_neumann_1995} relies heavily on these tools, we can roughly follow the outline in \cite{kenig_neumann_1995}, but the core lemmas will need adaptations which require new proof ideas. We will point out these differences in Section \ref{subsection:UnboundedStructure}. 
\smallskip

For certain applications it suffices to study operators with a pointwise bound on the weak derivative of the coefficients of the form \(\delta |\nabla A_1|\leq C\) (cf. for instance \cite{hofmann_a_infty_2017}, \cite{feneuil_green_2023}). In this case, we can relax the perturbative condition. Specifically, we can drop the \(L^\infty\) norm on the ball \(B(x,t,\delta(X,t)/2)\).

\begin{cor}\label{cor:MainThmWithoutSupCond}
    Let \(L_0, L_1\) be two parabolic operators of form \eqref{ParabolicOperator} on the cylinder domain \(\Omega:=\mathcal{O}\times\mathbb{R}\subset \mathbb{R}^{n+1}\) with a Lipschitz domain \(\mathcal{O}\). Assume that
    \begin{align} \delta |\nabla A_1|\leq C<\infty\textrm{ and }
        d\mu:=\frac{|A_0-A_1|^2}{\delta}dXdt\textrm{ is a Carleson measure.} \label{eq:CarlesonWithoutSupNorm} \end{align}
    Further assume that the Regularity problem \((R)^{L_0}_{q_0}\) is solvable for \(q_0>1\). Then we have conclusion \((S)\) and \((L)\) from \refthm{MainTheorem}.
\end{cor}

The measure in condition \eqref{eq:CarlesonWithoutSupNorm} is always less or equal to that of \eqref{eq:CarlesonWithSupNorm} and hence might cover a wider class of perturbations, but it comes at the cost of having a pointwise bound on the gradient of \(A_1\). Because the proofs for both conditions differ in only one point (proof of \reflemma{lemma:defv}), we include this corollary here. 
\smallskip

Apart from that, we establish a parabolic version of the small perturbation result for the \(L^p\) Neumann boundary value problem on a parabolic cylinder.
\begin{thm}\label{thm:MaintheoremNeumann}
    Assume that \(L_0, L_1\) are two parabolic operators of form \eqref{ParabolicOperator} on the cylinder domain \(\Omega:=\mathcal{O}\times\mathbb{R}\) with a Lipschitz \(\mathcal{O}\). Assume \eqref{eq:CarlesonWithSupNorm}. If \((N)^{L_0}_{q_0}\) and \((D)^{L_1^*}_{q_0'}\) are solvable for some \(q_0>1\), then there exists a small \(\EPS=\EPS(n,\lambda_0,\mathcal{O}, q_0)>0\), such that if \(\Vert \mu \Vert_{\mathcal{C}}\leq \EPS\), then \((N)^{L_1}_{q_0}\) is solvable.
\end{thm}

Compared to the Regularity problem, the Neumann perturbation contains the additional assumption of solvability of the Dirichlet problem for the adjoint. In fact, this assumption is also present in the elliptic case (cf. \cite{kenig_neumann_1995}) and does not appear in the Regularity perturbation because \((R)_{L_0}^{q_0}\) already implies \((D)_{L_0^*}^{q_0'}\). However, the Neumann problem \((N)_{L_0}^{q_0}\) is not known to imply \((D)_{L_0^*}^{q_0'}\). 

\subsection{Extensions to domains with rougher boundary}

All of our main results are formulated on cylinders \(\Omega=\mathcal{O}\times\mathbb{R}\) where \(\mathcal{O}\) is a Lipschitz domain (see \refdef{def:Lipschitzdomain}). However, some of the important ground work for us has been done on domains with rougher boundary (cf. \cite{dindos_regularity_2023}, \cite{dindos_satterqvist_24}, \cite{ulmer_l_2024}). 

With all these tools at our disposal on domains with rougher base \(\mathcal{O}\), it is natural to wonder whether the perturbation results, the main results of this work, could not be extended to these types of domains. We present a detailed discussion in Section \ref{subsection:geometryOfBoundary}. Here, we would like to mention that by imposing \(\omega_{L_1}^*\in B_{q_1}(\sigma)\) (or equivalently that \((D)_{L_1^*}^{q_0'}\) is solvable) as an additional condition, we obtain conclusion \((S)\) of \refthm{MainTheorem} also on domains with uniform base \(\mathcal{O}\) (see \refdef{def:unifromDomain}).

\begin{cor}[Alternative version of \((S)\) in \refthm{MainTheorem}]\label{cor:AltMainThm}
     Let \(L_0, L_1\) be two parabolic operators of form \eqref{ParabolicOperator} on the cylinder domain \(\Omega:=\mathcal{O}\times\mathbb{R}\) with uniform \(\mathcal{O}\). Assume \eqref{eq:CarlesonWithSupNorm}, that the Regularity problem \((R)^{L_0}_{q_0}\) is solvable for \(q_0>1\), and that \(\omega_{L_1}^*\in B_{q_1}(\sigma)\). Then there exists a sufficiently small \(\EPS=\EPS(n,\lambda_0,\mathcal{O}, q_0)>0\), such that if \(\Vert \mu \Vert_{\mathcal{C}}\leq \EPS\), then the Regularity problem \((R)^{L_1}_{q_0}\) is solvable.
\end{cor}

For an alternative of the large perturbation result \((L)\) of \refthm{MainTheorem} we have to impose more additional conditions and this is also part of the discussion in Section \ref{subsection:geometryOfBoundary}. There we will also elaborate that our perturbation results do not use that \(\mathcal{O}\) is Lipschitz intrinsically, and that a full extension to the case of uniform base \(\mathcal{O}\) at this point falls short due to certain missing results in the rougher setting.

\subsection{Application of perturbation theory to DKP operators}
Another question for the Dirichlet boundary value problem, that goes back to Dahlberg (\cite{dahlberg_estimates_1977} and \cite{dahlberg_poisson_1979}), is the following: For which matrices \(A\) is the \(L^p\) Dirichlet boundary value problem solvable?  The literature, that works towards answering this question, is extensive and we do not make an effort to mention all results here, but one of the most influential conditions is the DKP condition, or also Carleson condition. The DKP condition assumes essentially that \(|\nabla A|^2\delta dX\) is a Carleson measure. Kenig and Pipher were the first ones to prove solvability of the elliptic \(L^p\) Dirichlet problem for some \(p\) under this condition (cf. \cite{kenig_dirichlet_2001}). 
Interestingly, the DKP condition found application also for other elliptic boundary value problems, like the Regularity or Neumann problem (cf. the survey article \cite{dindos_boundary_2023} or \cite{dindos_lp_2007}, \cite{dindos_elliptic_2010}, \cite{dindos_boundary_2017}, \cite{dindos_etal_regularity_2023}, \cite{feneuil_alternative_2023}, \cite{mourgoglou_solvability_2025}, \cite{dindos_elliptic_2010}, \cite{dindos_boundary_2017}, \cite{dindos_etal_regularity_2023}).
\smallskip

The parabolic theory of boundary value problems largely trails behind the elliptic one. However, our parabolic perturbation results for the Regularity and for the Neumann problem are not only interesting in themselves, but have also been applied to prove new parabolic boundary value problems. Specifically, the perturbation results were necessary to extend the DKP condition to the regularity problem for parabolic time-varying operators (cf. \cite{dindos_lp_2025}). Previously, the DKP condition had only been shown to be sufficient for solvability of the parabolic Dirichlet boundary value problem (cf. \cite{hofmann_dirichlet_2001},\cite{dindos_dirichlet_2018},\cite{dindos_parabolic_2020}).

\subsection{Plan of the article}

A brief summary of this article is as follows. In Section \ref{section:Prelims}, we introduce all notations and basic definitions. We also define the solution spaces, the boundary value problems and list properties of solutions. The main proof of \refthm{MainTheorem} will need to distinguish domains \(\Omega=\mathcal{O}\times\mathbb{R}\) with unbounded \(\mathcal{O}\) and bounded \(\mathcal{O}\). Section \ref{section:BoundedMainTheorem} gives the full proof for unbounded \(\mathcal{O}\), where the first subsection \ref{subsection:UnboundedStructure} gives an overview over the proof and indicates the major differences to the elliptic case, while all the following subsections contain the proofs of the different needed lemmas. Section \ref{subsection:geometryOfBoundary} discusses extensions of this perturbation theory to domains with rougher geometries and contains the proof of \refcor{cor:AltMainThm}. Section \ref{subsection:BoundNF} discusses the minor adaptations that are needed to prove \refcor{cor:MainThmWithoutSupCond}: a parabolic adaptation of a lemma that is standard in the elliptic setting is in the appendix. In Section \ref{section:BoundedMainTheorem}, we show the proof of \refthm{MainTheorem} for bounded \(\mathcal{O}\), which uses large parts of the proof for unbounded \(\mathcal{O}\), but contains too many deviations to be dealt with simultaneously.
Lastly, Section \ref{section:ProofNeumann} provides the proof of \refthm{thm:MaintheoremNeumann}, where, again, a fairly routine parabolic adaption is given in the appendix.

\section{Notations and Preliminaries}\label{section:Prelims}

\subsection{Basic definitions}

For every point in our domain we write \((X,t)\in \Omega:=\mathcal{O}\times\mathbb{R}\subset\mathbb{R}^{n+1}\), where the last component \(t\) is the time and \(\mathcal{O}\subset \mathbb{R}^n\) is a spatial domain.

We denote the parabolic distance as
\[|(Y,s)-(X,t)|_p:=\sqrt{|s-t| + \sum_{i=1}^n |y_i-x_i|^2}, \]
and the distance to the boundary as
\[\delta(X,t):=\inf_{(Y,s)\in\partial\Omega}|(X,t)-(Y,s)|_p.\]
The point that minimizes the distance of \(X\in \mathcal{O}\) to the boundary \(\partial\mathcal{O}\) is denoted by \(X^*\), i.e. \(|X-X^*|=\delta(X)\).

We introduce a variety of notations for balls and cubes
\begin{itemize}
    \item \(B(X,t,r)=\{(Y,s)\in \Omega; |(Y,s)-(X,t)|_p\leq r\}\),
    \item \(B(X,r)=\{Y\in \mathcal{O}; |Y-X|\leq r\}\),
    \item \(Q(X,t,r)=\{(Y,s)\in \Omega; |Y_i-X_i|\leq r, |s-t|\leq r^2\}\),
    \item \(\Delta(X^*,t,r)=\partial\Omega\cap Q(X^*,t,r)=\{(Y,s)\in \partial\Omega; |Y_i-X^*_i|\leq r, |s-t|\leq r^2\}\),
    \item \(T(\Delta(X^*,t,r))=\Omega\cap Q(X^*,t,r)=\{(Y,s)\in \Omega; |Y_i-X^*_i|\leq r, |s-t|\leq r^2\}\).
\end{itemize}

Now, we define families of Whitney cubes over a point \((X,t)\in\partial\Omega\) by
\[\mathcal{W}(X,t):=\{I; I=\Delta(X,t, 2^j) \textrm{ for some }j\in\mathbb{Z}\}\]
and
\[\mathcal{W}^+(X,t):=\{I^+; I=\Delta(X,t,2^j)\textrm{ for some }j\in\mathbb{Z}, 1\leq i\leq N\},\]
where 
\[(\Delta(X,t,2^j))^+:=\{(X,t)\in \Omega; |(X,t)-(X_i,t_i)|_p\leq 2^{j+1}, 2^{j-1}\leq \delta(X,t)\leq 2^{j+1}\}.\]
According to our definitions, \(\mathcal{W}\) contains boundary cubes, while \(\mathcal{W}^+\) contains Whitney cubes in \(\Omega\). 
We would also like to use the opportunity to introduce enlargements of these cubes \(I:=\Delta(Y,s,2^j)\) by
\[\tilde{I}:=\Delta(Y,s,5*2^{j-2})\qquad \textrm{and}\qquad \hat{I}:=\Delta(Y,s,6*2^{j-2}).\]

Similar to a Whitneybox decomposition over a point, we define a Whitney decomposition of a Carleson region over a boundary ball \(\Delta(X,t,r)\). We note that for \((X,t)\in \partial\Omega\), every scale \(j\in \mathbb{Z}\), and a small \(c>0\) there are \(N(j)\) points \((X^j_i,t^j_i)\) such that
\(I_i^j:=\Delta(X^j_i,t^j_i,2^j)\) have finite overlap and cover \(\Delta(X,t, r)\). Then
\[\mathcal{W}(\Delta(X,t,r)):=\{I^+|I=I_i^j=\Delta(Y^j_i,s^j_i,2^j), \textrm{ for some }j\in\mathbb{Z}, 1\leq i\leq N(j)\},\]
where all cubes in \(\mathcal{W}\) have finite overlap and cover \(Q(X,t,r)\cap\Omega\).
\smallskip

Furthermore, we define a cone with aperture \(\alpha\) as
\[\Gamma_\alpha(X,t):=\{(Y,s)\in \Omega; |(Y,s)-(X,t)|_p<(1+\alpha)\delta(Y,s)\}.\]

The \textit{nontangential maximal function} is defined as
\[\tilde{N}^{\alpha,\beta}_p(u)(X,t):=\sup_{(Y,s)\in \Gamma_\alpha(X,t)} \Big(\fint_{Q(Y,s,\beta \delta(Y,s)/2)}|u|^pdZd\tau\Big)^{1/p}.\]
If \(\alpha\) and \(\beta\) are not specified then we set them to \(1\) or take the quantity that is clear from context. If \(p\) is not specified it is set to be \(2\). If we write \(N(u)(X,t):=\sup_{(Y,s)\in \Gamma_\alpha(X,t)} |u(Y,s)|\) we mean the usual \textit{nontangential maximal function}.  

From the definitions of the enlargements, we see that
\[\Gamma_2(X,t)\subset \bigcup_{I\in \mathcal{W}(X,t)}I^+\subset\bigcup_{I\in \mathcal{W}(X,t)}\tilde{I}^+\bigcup_{I\in \mathcal{W}(X,t)}\hat{I}^+\subset\Gamma_3(X,t).\]

When writing \(\nabla\) we always mean the gradient with respect to the spatial coordinates \(X\), while the partial time derivative is always separated as \(\partial_t\). Throughout this work we will use the \textit{surface measure} on \(\partial\Omega\) which is given by
\[\sigma(E):=\mathcal{H}^{n}|_{\partial\Omega}(E) \qquad \textrm{for }E\subset\partial\Omega,\]
where \(\mathcal{H}^{n}|_{\partial\Omega}\) is the \(n\) dimensional Hausdorff measure restricted to \(\partial\Omega\). The parabolic measure with pole in \((X,t)\) will be written as \(\omega^{(X,t)}\).

We also denote the Carleson norm of a measure \(\mu\) as \[\Vert \mu\Vert_{\mathcal{C}}=\sup_{\Delta\subset\partial\Omega}\sigma(\Delta)^{-1}\int_{T(\Delta)}d\mu.\]

We set a parabolic operator as
\begin{align}L:=\partial_t-\Div(A(X,t)\nabla\cdot),\label{ParabolicOperator}\end{align}
and its adjoint as
\begin{align}L^*:=-\partial_t -\Div(A^*(X,t)\nabla \cdot),\label{AdjointOperator}\end{align}
where \(A\) is a bounded elliptic matrix, i.e there exists \(\lambda>0\) such that
\[\lambda |\xi|^2\leq A(X,t)\xi\cdot\xi\leq \frac{1}{\lambda} |\xi|,\qquad\textrm{for all }\xi\in \mathbb{R}^n,\textrm{ a.e. }(X,t)\in\Omega,\]
and every component of \(A\) is bounded.

We follow \cite{dindos_regularity_2023} (also \cite{jerison_boundary_1982}) for the following definitions.
\begin{defin}[Interior Corkscrew condition]\label{def:CorcscrewCondition}
	The domain \( \mathcal{O}\subset\mathbb R^n \) satisfies the \emph{corkscrew condition} with parameters \( M > 1, r_0 > 0 \) if, 
	    for each boundary cube \( \Delta \coloneqq \Delta(P,r) \) with \( P \in \partial \mathcal{O} \) 
	    and \( 0 < r < r_0 \),
		there exists a point \( A^X(P,r) \in \mathcal{O}  \), called a \emph{corkscrew point relative to} \( \Delta \),
		    such that \( B(A^X(P,r),M^{-1}r) \subset B(P,r) \). 
\end{defin}

We introduce the forwards and backwards in time corkscrew points of a boundary ball \(\Delta=\Delta(X,t,r)\subset\partial\Omega\), where \((X,t)\in\partial\Omega\), by
\[\bar{A}(\Delta):=(A^X(X,r),t +4r^2),\qquad\textrm{and}\qquad\underline{A}(\Delta):=(A^X(X,r),t -4r^2). \]
Here \(A^X(X,r)\in \mathcal{O}\) denotes the spatial corkscrew point from the previous definition.

\begin{defin}[Harnack chain]
	The domain \( \mathcal{O}\subset\mathbb{R}^n \) is said to satisfy the \emph{Harnack chain condition} if there is a constant \( c_0 \)
	such that for each \( \rho > 0, \; \Lambda \geq 1, \; X_1,X_2 \in \mathcal{O} \) 
	    with \( \delta(X_j) \geq \rho \) and \( |X_1 - X_2| \leq \Lambda \rho \),
	    there exists a chain of open balls \( B_1,\dots,B_N \subset \mathcal{O} \) 
	    with \( N \lesssim_{\Lambda} 1, \;  X_1 \in B_1, \; X_2 \in B_N, \; B_i \cap B_{i+1} \neq \EMP \)
	    and \( c_0^{-1} r(B_i) \leq \Dist(B_i,\partial \mathcal{O}) \leq c_0 r(B_i) \).
	The chain of balls is called a \emph{Harnack chain}.
\end{defin}

\begin{defin}[Uniform domain]\label{def:unifromDomain}
Let \(\mathcal{O}\subset\R^n \) satisfy the Harnack chain condition and the Interior Corkscrew condition. If \(\mathcal{O}\) is a set of locally finite perimeter and \((n-1)\) dimensional Ahlfors regular boundary, 
    i.e. there exists  \(C\geq 1\) so that for \(r\in (0,\mathrm{diam}(\Omega))\) and \(Q\in \partial \mathcal{O}\)
\[C^{-1}r^{n-1}\leq \sigma(B(Q,r))\leq Cr^{n-1},\]
then we call \(\mathcal{O}\) a uniform domain.
\end{defin}

Next, we define what it means for \(\mathcal{O}\) to be a Lipschitz domain.
\begin{defin}
    The set \(\mathcal{Z}\subset\mathbb{R}^n\) is an \(l\)-cylinder of diameter \(d\) if there exists an orthogonal coordinate system \((x,t)\in\mathbb{R}^{n-1}\times\mathbb{R}\) such that
    \[\mathcal{Z}=\{(x,t);|x|<d,|t|<2ld\}.\]
\end{defin}

\begin{defin}[Lipschitz domain]\label{def:Lipschitzdomain}
    Let \(\mathcal{O}\subset\mathbb{R}^n\) be a domain.
    If \(\mathcal{O}\) is bounded, we say \(\mathcal{O}\) is a \textit{Lipschitz domain} with character \((l,N,d)\) if there are \(l\)-cylinders \(\{\mathcal{Z}_j\}_{j=1}^N\) of diameter \(d\) satisfying the following conditions:
    \begin{enumerate}[(i)]
        \item \(8\mathcal{Z}_j\cap\partial\Omega\) is the graph of a Lipschitz function \(\phi_j\), \(\Vert\nabla \phi_j\Vert_\infty\leq l;\phi_j(0)=0\),
        \item \(\partial\Omega=\bigcup_{j=1}^N(\mathcal{Z}_j\cap\partial\Omega)\),
        \item \(\mathcal{Z}_j\cap\Omega\supset\Big\{(x,t)\in \Omega:|x|<d, \delta(x,t)\leq \frac{d}{2}\Big\}.\)
        \item Each cylinder \(\mathcal{Z}_j\) contains points from \(\Omega^C=\mathbb{R}^n\setminus\Omega\).
    \end{enumerate}

    If \(\mathcal{O}\) is unbounded, then we say \(\mathcal{O}\) is a \textit{Lipschitz domain} or \textit{Lipschitz graph domain} if there exists a Lipschitz function \(\phi:\mathbb{R}^{n-1}\to\mathbb{R}\) and a coordinate system \((x,t)\in \mathbb{R}^{n-1}\times\mathbb{R}\) such that \(\mathcal{O}=\{(x,t);t>\phi(x)\}\).
\end{defin}

In the following we are going to consider \textit{parabolic cylinders} as domains of the form
\[\Omega:=\mathcal{O}\times\mathbb{R},\]
where \(\mathcal{O}\) is a uniform domain. The domains in this article do not change in time direction.

\subsection{Reinforced weak and energy solutions}\label{subsection:EnergySol}

We are going to discuss solutions to \(Lu=0\) like in \eqref{eq:StandardPDE} and solutions to the Poisson equation
\begin{align}
    Lu=\partial_t u-\mathrm{div}(A\nabla u)=\mathrm{div}(h).\label{eq:PoissonEquation}
\end{align}

We follow the concept of reinforced weak and energy solutions that appeared already in \cite{auscher_l2_2020} and was slightly generalized in \cite{dindos_regularity_2023}, and \cite{dindos_lp_2025}. Let \(\Dt\) be the half-order derivative and \(\mathcal{H}_t\) the Hilbert transform with respect to the time variable \(t\), designed in such a way that \(\partial_t=\Dt\circ\mathcal{H}_t\circ\Dt\). We denote by \(W^{1,2}(\mathcal{O})\) the standard Sobolev space of real-valued functions \(v\) defined on \(\mathcal{O}\) such that \(v\) and \(\nabla v\) are in \(L^2(\mathcal{O},\mathbb{R})\) and \(L^2(\mathcal{O},\mathbb{R}^n)\), respectively. The subscript 'loc' will indicate that these conditions holds locally. 
\smallskip

We say that \(u:\Omega\to\mathbb{R}\) is a \textit{reinforced weak solution} of \(Lu=\partial_t u-\mathrm{div}(A\nabla u)=0\) on \(\Omega=\mathcal{O}\times\mathbb{R}\) if 
\[u\in \dot{E}_{loc}(\Omega):=\dot{H}^{1/2}_{loc}(\mathbb{R}, L^2_{loc}(\mathcal{O}))\cap L^2_{loc}(\mathbb{R},W^{1,2}_{loc}(\mathcal{O}))\]
and if for all \(\phi, \psi\in C^\infty_0(\Omega)\),
\begin{align}
    \int_\Omega A\nabla u\cdot\nabla (\phi\psi)+\mathcal{H}_t\Dt(u\psi)\cdot \Dt\phi + \mathcal{H}_t\Dt(u\phi)\cdot\Dt\psi dXdt=0.\label{eq:ReinforcedSolution}
\end{align}
The space \(\dot{H}^{1/2}(\mathbb{R})\) is the homogeneous Sobolev space of order \(1/2\) (it is the completion of \(C_0^\infty(\mathbb{R})\) for the norm \(\Vert \Dt \cdot\Vert_2\) and, modulo constants, it embeds into the space \(\mathcal{S}'/\mathbb{R}\) of tempered distributions modulo constants). By \(\dot{H}^{1/2}_{loc}(\mathbb{R})\) we denote all functions \(u\) such that \(u\phi\in \dot{H}^{1/2}(\mathbb{R})\) for every \(\phi\in C^\infty_0(\mathbb{R})\).

We remark that if \(u\in\dot{E}_{loc}\) satisfies \eqref{eq:ReinforcedSolution} for \(\phi,\psi\in C_0^\infty\), then
\[\int_\Omega A\nabla u\cdot\nabla (\phi\psi)-u\partial_t(\phi\psi)dXdt=0.\]
If we set \(\psi=1\) on the support of \(\phi\), this formulation states that \(u\) is a \textit{weak solution} and we see that every reinforced weak solution is a weak solution.

Furthermore, we say that a reinforced weak solution \(u\in \dot{E}_{loc}(\Omega)\) belongs to the \textit{energy class} \(\dot{E}(\Omega)\) if
\[\Vert v\Vert_{\dot{E}}:=\big(\Vert\nabla v\Vert_{L^2(\Omega)}^2 + \Vert\Dt v\Vert_{L^2(\Omega)}^2\big)^{1/2}<\infty.\]
Consequently, these are called \textit{energy solutions}.

As shown in \cite{auscher_l2_2020} (also in \cite{dindos_lp_2025}), the trace space of \(\dot{E}\) is the space \(\dot{H}^{1/4}_{\partial_t-\Delta}(\partial\Omega)\) where \(\dot{H}^{s}_{\pm\partial_t-\Delta}(\partial\Omega)\) is the closure of the space of Schwarz functions \(v\in\mathcal{S}(\partial\Omega)\) with Fourier support away from the origin with respect to the norm \(\Vert\mathcal{F}((|\xi|^2\pm i\tau)^s\mathcal{F}v)\Vert_{L^2(\partial\Omega)}\). The reverse is also true, i.e. every function \(g\in\dot{H}^{1/4}_{\partial_t-\Delta}(\partial\Omega)\) can be extended to a function \(v\in\dot{E}(\Omega)\) with \(v|_{\partial\Omega}=g\). We remark that all these arguments work on domains \(\Omega\) with Lipschitz bases \(\mathcal{O}\) because we can use a partition of the boundary to reduce the underlying arguments to arguments on the upper half space which was dealt with in \cite{auscher_l2_2020} (see also \cite{dindos_lp_2025}). 

By an energy solution \(u\) to \eqref{eq:StandardPDE} with Dirichlet data \(f\in \dot{H}^{1/4}_{\partial_t-\Delta}(\partial\Omega)\) we mean \(u\in \dot{E}(\Omega)\) such that \(u|_{\partial\Omega}=f\) and 
\[\int_\Omega A\nabla u\cdot \nabla v + \mathcal{H}_t\Dt u \cdot \Dt v dXdt=0 \]
for all \(v\in \dot{E}_0(\Omega)\), the subspace of \(\dot{E}(\Omega)\) with zero boundary trace.
By an energy solution \(u\) to \eqref{eq:StandardPDE} with Neumann data \(A\nabla u\cdot \nu=g\in \dot{H}^{-1/4}_{\partial_t-\Delta}(\partial\Omega)\) we mean \(u\in \dot{E}(\Omega)\) such that for all \(v\in\dot{E}(\Omega)\)
\begin{align}\int_\Omega A\nabla_X u\cdot \overline{\nabla_X v} + \mathcal{H}_t\Dt u \cdot \overline{\Dt v} dXdt=-\langle g, v|_{\partial\Omega}\rangle,\label{eq:NeummanSolution}\end{align}
where \(\langle \cdot, \cdot\rangle\) is the dual pairing of \(\dot{H}^{-1/4}_{\partial_t-\Delta}(\partial\Omega)\) with \(\dot{H}^{1/4}_{\partial_t-\Delta}(\partial\Omega)\).

To show wellposedness of the Dirichlet and Neumann problems with data in \(\dot{H}^{1/4}_{\partial_t-\Delta}(\partial\Omega)\) and \(\dot{H}^{-1/4}_{\partial_t-\Delta}(\partial\Omega)\) respectively, we use the modified sesquilinear form 
\[a_\delta(u,v):=\int_\Omega A\nabla_X u\cdot \overline{\nabla_X (1-\delta\mathcal{H}_t)v} + \mathcal{H}_t\Dt u \cdot \overline{\Dt (1-\delta\mathcal{H}_t)v} dXdt.\]
This form is coercive for a small choice \(\delta>0\), and hence we can apply Lax-Milgram to obtain existence of a unique energy solution \(u\in\dot{E}(\Omega)\) to 
\begin{align}a_\delta(u,v)=-\langle g, (1-\delta\mathcal{H}_t)v|_{\partial\Omega}\rangle.\label{eq:a_deltaNeumannDef}\end{align}

Since \((1-\delta\mathcal{H}_t)\) is invertible for a sufficiently small choice of \(\delta\), replacing \((1-\delta\mathcal{H}_t)v\) by \(v\) yields that \(u\) also satisfies \eqref{eq:NeummanSolution} and is a energy solution to \eqref{eq:StandardPDE} with Neumann data \(g\).

For the Dirichlet problem consider \(f\in \dot{H}^{1/4}_{\partial_t-\Delta}(\partial\Omega)\) and an extension \(w\in \dot{E}(\Omega)\) of \(f\). Then we can apply the Lax-Milgram lemma to \(a_\delta\) on \(\dot{E}_0(\Omega)\) to obtain some \(\tilde{u}\in \dot{E}_0(\Omega)\) with
\[a_\delta(\tilde{u},v)=-a_\delta(w,v)\qquad \textrm{for all }v\in \dot{E}_0(\Omega).\]
Due to linearity, \(u:=\tilde{u}+w\) is now an energy solution to \eqref{eq:StandardPDE} with Dirichlet data \(f\). This solution is unique, since for any other function \(v\) with these properties we have \(a_\delta(u-v,u-v)=0\), whence coercivity implies \(\Vert u-v\Vert_{\dot{E}}=0\).

Since an energy solution \(u\) to the Dirichlet problem is an extension of boundary data \(f\in\dot{H}^{1/4}_{\partial_t-\Delta}(\partial\Omega)\), we have \(\Vert u\Vert_{\dot{E}}\lesssim \Vert \tilde{u}\Vert_{\dot{E}} + \Vert w\Vert_{\dot{E}}\lesssim \Vert f\Vert_{\dot{H}^{1/4}_{\partial_t-\Delta}(\partial\Omega)}\). Similarly, we obtain for an energy solution \(u\) for the Neumann problem with \(g\in \dot{H}^{-1/4}_{\partial_t-\Delta}(\partial\Omega)\) from \eqref{eq:NeummanSolution}
\begin{align*}
\Vert u\Vert_{\dot{E}}^2&\lesssim a_\delta(u,u)=\langle g, (1-\delta\mathcal{H}_t)u|_{\partial\Omega}\rangle\lesssim \Vert g\Vert_{\dot{H}^{-1/4}_{\partial_t-\Delta}(\partial\Omega)} \Vert (1-\delta\mathcal{H}_t)u|_{\partial\Omega}\Vert_{\dot{H}^{1/4}_{\partial_t-\Delta}(\partial\Omega)}
\\
&\lesssim \Vert g\Vert_{\dot{H}^{-1/4}_{\partial_t-\Delta}(\partial\Omega)} \Vert (1-\delta\mathcal{H}_t)u\Vert_{\dot{E}}\lesssim \Vert g\Vert_{\dot{H}^{-1/4}_{\partial_t-\Delta}(\partial\Omega)} \Vert u\Vert_{\dot{E}}.
\end{align*}
In the last line we used Lemma 3.14 from \cite{auscher_l2_2020} and \(L^2-L^2\) boundedness of \((1-\delta\mathcal{H}_t)\). Hence we have \(\Vert u\Vert_{\dot{E}}\lesssim \Vert g\Vert_{\dot{H}^{-1/4}_{\partial_t-\Delta}(\partial\Omega)}\).

For the Poisson-Dirichlet problem we say a function \(u\in \dot{E}_0(\Omega)\) is a solution to \eqref{eq:PoissonEquation} with vanishing Dirichlet data and Poisson data \(h\in C_c^\infty(\Omega)\) if 
\begin{align}a_\delta(u,\phi)=\int_\Omega h\cdot \nabla_X(1-\delta\mathcal{H}_t)\phi dXdt\label{eq:PDSolution}\end{align}
for all \(\phi\in C_c^\infty(\Omega)\). Existence and Uniqueness of such a solution \(u\) follow again by Lax Milgram, and we note that \(\Vert u\Vert_{\dot{E}}\lesssim \Vert h\Vert_{L^2(\Omega)}\), since
\[\Vert u\Vert_{\dot{E}(\Omega)}\lesssim a_\delta(u,u)\lesssim \Vert h\Vert_{L^2(\Omega)}\Vert \nabla u\Vert_{L^2(\Omega)}\lesssim \Vert h\Vert_{L^2(\Omega)}\Vert u\Vert_{\dot{E}(\Omega)}.\]

\subsection{Boundary value problems}\label{subsection:BVPs}

\medskip

Let us define the \(L^p\) Dirichlet boundary value problem.
\begin{defin}[\((D)_p^L\)]
    Let \(f\in \dot{H}^{1/4}_{\partial_t-\Delta}(\partial\Omega)\cap L^p(\partial\Omega)\) and let \(u\in \dot{E}(\Omega)\) be the energy solution to \eqref{eq:StandardPDE} with Dirichlet data \(f\). If
    \[\Vert N(u)\Vert_{L^p(\partial\Omega)}\lesssim\Vert f\Vert_{L^p},\]
    where the implied constants are independent of \(u\) and \(f\), then we say that the \(L^p\) Dirichlet boundary value problem is solvable. We write \((D)^L_p\) holds.
\end{defin}

Before we can define the \(L^p\) Regularity boundary value problem, we need to introduce the space \(\dot{L}^p_{1,1/2}(\partial\Omega)\). For a Borel function \(f:\partial\mathcal{O}\to\mathbb{R}\) we say that a Borel function \(g:\partial\mathcal{O}\to\mathbb{R}\) is a \textit{Haj\l{}asz upper gradient} of \(f\) if
\[|f(X)-f(Y)|\leq |X-Y|(g(X)+g(Y))\qquad\textrm{for a.e. }X,Y\in \partial\mathcal{O}.\]
We denote the collection of all Haj\l{}asz upper gradients of \(f\) as \(\mathcal{D}(f)\). Now we can define the norm of \(\dot{L}_1^p(\partial\mathcal{O})\) by
\[\Vert f\Vert_{\dot{L}^p_1(\partial\mathcal{O})}:=\inf_{g\in\mathcal{D}(f)}\Vert g\Vert_{L^p(\partial\mathcal{O})}.\]
This space is also called homogeneous \textit{Haj\l{}asz Sobolev space}.
\smallskip

This definition of \(\dot{L}_1^p\) simplifies in settings of higher regularity. For example, if \(f\) is a Lipschitz function and \(\mathcal{O}\) a Lipschitz domain, then
\[\Vert f \Vert_{\dot{L}_1^p(\partial\mathcal{O})}\approx\Vert \nabla_T f \Vert_{L^p(\partial\mathcal{O})},\]
where \(\nabla_T\) is the tangential derivative (cf. (1.1) in \cite{mourgoglou_regularity_2023}). If \(\mathcal{O}\) is Lipschitz, the tangent space in almost every point \(P\in\partial\Omega\) has dimension \(n-1\) and we have a tangent space basis \(T_1(P), ..., T_{n-1}(P)\) of unit vectors. Then the tangential derivative in these points is given by \(\nabla_T f:=\sum_{i=1}^{n-1}(\nabla f\cdot T_i(P))T_i(P)\) and hence we can equally define
\[\Vert f\Vert_{\dot{L}^p_1(\partial\mathcal{O})}:=\Big(\int_{\partial\mathcal{O}}|\nabla_T f|^p d\sigma\Big)^{1/p}:=\Big(\sum_{i=1}^{n-1}\int_{\partial\mathcal{O}}|\nabla f\cdot T_i(P)|^p d\sigma\Big)^{1/p}\]
in this case.

Since in the parabolic setting the time derivative scales like the second order spatial derivative, if we want to control the tangential spatial derivative, which is a first order derivative, we also need to control the half-derivative in time. Hence, we define the \textit{parabolic homogeneous Haj\l{}asz Sobolev space} \(\dot{L}^p_{1,1/2}(\partial\Omega)\) by 
\[\Vert f\Vert_{\dot{L}^p_{1,1/2}(\partial\Omega)}:=\big(\int_\mathbb{R}\Vert f(\cdot,t)\Vert_{\dot{L}_1^p(\partial\mathcal{O})}^p dt + \Vert \Dt f\Vert_{L^p(\partial\Omega)}^p\big)^{1/p}.\] We define the Regularity problem next.

\begin{defin}[\((R)_p^L\)]\label{def:DefintionOfRegularityProblem}
    Let \(f\in \dot{H}^{1/4}_{\partial_t-\Delta}(\partial\Omega)\cap \dot{L}^p_{1,1/2}(\partial\Omega)\) and let \(u\in \dot{E}(\Omega)\) be the energy solution to \eqref{eq:StandardPDE} with Dirichlet boundary data \(f\). If
    \begin{align}\Vert \tilde{N}(\nabla u)\Vert_{L^p(\partial\Omega)}\lesssim\Vert f\Vert_{\dot{L}_{1,1/2}^p(\partial\Omega)},\label{eq:RPinequality}\end{align}
    where the implied constants are independent of \(u\) and \(f\), then we say that the \(L^p\) Regularity boundary value problem is solvable for \(L\). We write \((R)^L_p\) holds for \(L\).
\end{defin}

\begin{rem}
    It appears that in our definition of \((R)_p\) the term \(\Vert \tilde{N}(\Dt u)\Vert_p\) is missing on the left side of \eqref{eq:RPinequality}. However, \cite{dindos_regularity_2023} shows that \eqref{eq:RPinequality} already implies
    \[\Vert \tilde{N}(\Dt u)\Vert_{L^p(\partial\Omega)}\lesssim\Vert f\Vert_{\dot{L}_{1,1/2}^p(\partial\Omega)},\]
    and hence our definition is in fact equivalent to defining \((R)_p\) by 
    \[\Vert \tilde{N}(\nabla u)\Vert_{L^p(\partial\Omega)} + \Vert \tilde{N}(\Dt u)\Vert_{L^p(\partial\Omega)}\lesssim\Vert f\Vert_{\dot{L}_{1,1/2}^p(\partial\Omega)}.\]
\end{rem}

Now we can define the \(L^p\) Neumann boundary value problem.
\begin{defin}[\((N)_p^L\)]\label{def:DefintionOfNeumannProblem}
    Let \(f\in \dot{H}^{-1/4}_{\partial_t-\Delta}(\partial\Omega)\cap L^p(\partial\Omega)\) and let \(u\in \dot{E}(\Omega)\) be the energy solution to \eqref{eq:StandardPDE} with Neumann data \(f\), i.e. \(u\) satisfies \eqref{eq:NeummanSolution}. If
    \begin{align}\Vert \tilde{N}(\nabla u)\Vert_{L^p(\partial\Omega)}\lesssim\Vert f\Vert_{L^p(\partial\Omega)},\end{align}
    where the implied constants are independent of \(u\) and \(f\), then we say that the \(L^p\) Neumann boundary value problem is solvable for \(L\). We write \((N)^L_p\) holds for \(L\).
\end{defin}

\begin{rem}\label{rem:nontangnetialmeanvaluedL1L2}
    The Regularity and Neumann problem are both formulated with control over the \(L^2\) mean-valued nontangential maximal function \(\tilde{N}(\nabla u)=\tilde{N}_2(\nabla u)\). In the proof of \refthm{MainTheorem} and \refthm{thm:MaintheoremNeumann} we will show bounds on the \(L^1\) mean-valued version \(\tilde{N}_1(\nabla u)\). To reconcile these bounds we can observe that we have \(\tilde{N}_1(\nabla u)\leq \tilde{N}_2(\nabla u)\) pointwise and by the higher integrability of the gradient and purely real variable arguments, we have that
$$\left(\fint_B|\nabla u|^2\right)^{1/2}\lesssim \left(\fint_{2B}|\nabla u|\right),$$
which implies a bound of $\tilde{N}_2(\nabla u)(\cdot)$ defined using cones $\Gamma_a(\cdot)$ of some aperture $a>0$ by $\tilde{N}_1(\nabla u)(\cdot)$ defined using cones $\Gamma_b(\cdot)$ of some slightly larger aperture $b>a$ (cf. Lemma 3.2 in \cite{dindos_lp_2025}). 
\end{rem}

Let us recall some relationships between different boundary value problems that will be relevant.

\begin{prop}[\cite{dindos_satterqvist_24}]\label{prop:ParabolicPR_pimpliesRP_q}
    Let \(L\) be a parabolic operator like in \eqref{ParabolicOperator} and \(\Omega=\mathcal{O}\times\mathbb{R}\), where \(\mathcal{O}\) is a Lipschitz domain. Then \((R)^L_p\) implies \((R)^L_q\) for \(1< q<p\).
\end{prop}

An important connection between the Dirichlet and the Regularity problem is the following proposition from \cite{dindos_parabolic_2019}.

\begin{prop}\label{prop:ParabolicRPimpliesDP}
    Let \(L\) be a parabolic operator like in \eqref{ParabolicOperator} and \(\Omega=\mathcal{O}\times\mathbb{R}\), where \(\mathcal{O}\) is a Lipschitz domain. If \((R)^L_p\) is solvable, then \((D)^{L^*}_{p'}\) is solvable, where \(\frac{1}{p}+\frac{1}{p'}=1\). 
\end{prop}

Furthermore, let us denote the parabolic measure to the operator \(L\) as \(\omega_L\), the Muckenhoupt spaces as \(A_p(\sigma)\), and the reverse Hölder spaces as \(B_q(\sigma)\). Their definitions and main properties go back to \cite{coifman_weighted_1974}. In the parabolic setting specifically, we can also refer to chapter 6 of \cite{nystrom_dirichlet_1997}. For us relevant will only be the following three properties:
    \begin{enumerate}[(i)]
        \item It holds \(A_\infty(\sigma)=\bigcup_{p>1}B_p(\sigma)\);
        \item Let \(\omega\in A_\infty(\sigma)\). Then for each \(\EPS>0\) there exists \(\delta>0\) such that for all \(E\subset\Delta\subset\tilde{\Delta}\)
        \begin{align}\frac{\omega^{A(\tilde{\Delta})}(E)}{\omega^{A(\tilde{\Delta})}(\Delta)}<\delta\qquad\textrm{implies}\qquad \frac{\sigma(E)}{\sigma(\Delta)}<\EPS.\label{eq:A_inftyProperty}\end{align}
        \item Let 
        \[M_\mu(f)(Q,t):=\sup_{r>0}\frac{1}{\mu(\Delta(Q,t,r))}\int_{\Delta(Q,t,r)}|f|d\mu\]
        for a boundary point \((Q,t)\in\partial\Omega\). Then \(\omega_L\in B_q(\sigma)\) if and only if for all boundary balls \(\tilde{\Delta}\subset\partial\Omega\)
        \begin{align}\int_{\tilde{\Delta}}|M_{\omega^{A(\tilde{\Delta})}}(f)|^{q'} d\sigma\leq C\int_{\tilde{\Delta}}|f|^{q'}d\sigma.\label{eq:BoundMaxRevHölder}\end{align}
    \end{enumerate}
The important connection of the Muckenhoupt spaces to the Dirichlet problem is the same as in the elliptic setting. 
\begin{prop}[\cite{nystrom_dirichlet_1997}, Theorem 6.2]\label{prop:DPequivalentB_p}
    For a parabolic operator \(L\) and its corresponding parabolic measure \(\omega\) we have that \(\omega\in B_p(\sigma)\) if and only if \((D)_{p'}\) is solvable.
\end{prop}

For the parabolic Dirichlet boundary value problem we have the following perturbation results.

\begin{prop}\label{prop:ParabolicPerturbationTheoryForDP}
Let \(\Omega\) be a parabolic cylinder \(\Omega=\mathcal{O}\times\mathbb{R}\) where \(\mathcal{O}\) is Lipschitz. Let \(L_0\) and \(L_1\) be two parabolic operators like in \eqref{ParabolicOperator} and assume \eqref{eq:CarlesonWithSupNorm}.

Then the following small and large Carleson norm results hold:
    \begin{itemize}
        \item[(S)] There exists a sufficiently small \(\EPS=\EPS(p,\Omega,n, \lambda_0)>0\), such that if \(\omega_{L_0}\in B_p(\sigma)\) and \(\Vert \mu\Vert_{\mathcal{C}}<\EPS\), then \(\omega_{L_1}\in B_p(\sigma)\);
        \item[(L)] If \(\omega_{L_0}\in A_\infty(\sigma)\) and \(\Vert \mu\Vert_{\mathcal{C}}<\infty\), then \(\omega_{L_1}\in A_\infty(\sigma)\).
    \end{itemize}
\end{prop}

For Lip(1,1/2) domains this proposition can be found in \cite{nystrom_dirichlet_1997} (Theorem 6.6 and Theorem 6.5). The small Carleson norm perturbation has also independently been done in \cite{sweezy_bq_1998} using the same techniques that where inspired by the first Dirichlet perturbation paper in the elliptic setting \cite{fefferman_theory_1991}. It is also noteworthy that if \(\mathcal{O}=\mathbb{R}^n_+\) is the upper half space, then \cite{hofmann_dirichlet_2001} shows the large Carleson perturbation result for operators with drift term in which setting one does not have the doubling property of the parabolic measure which complicates the proof and requires adaptations.

\subsection{Properties of solutions and the Green's function}\label{subsection:PropertiesOfSolutions}

Here, we list properties of solutions \(u\) to \eqref{eq:StandardPDE} and the Green's function \(G\) on parabolic cylinder domains of form \(\Omega=\mathcal{O}\times\mathbb{R}\). The tools we need are considered to be fairly standard and are well established if \(\mathcal{O}\) is Lipschitz. However, if \(\mathcal{O}\) is uniform, these results are fairly new in the literature, and we would like to provide some references.

First, let us recall the inner properties of a solution \(u\) which are independent of the geometry of the boundary: Parabolic Cacciopolli inequality (Lemma 3.3 of Chapter I in \cite{hofmann_dirichlet_2001}), and Parabolic Harnack inequality (Lemma 3.5 of Chapter I in \cite{hofmann_dirichlet_2001}). Next, \cite{cho_greens_2008} shows the existence of the Green's function in a setting even more general than ours and proves several important properties. We are going to write the Green's function as \(G(X,t,Y,s)\) where its pole lies in the point \((X,t)\). One of the important properties, that was not shown in \cite{cho_greens_2008}, is the following growth estimate.  

\begin{lemma}[Growth estimate, Thm. 3.1 in \cite{kim_greens_2021}]\label{lemma:ParabolicGrowthEstimateForG}
    We have an growth estimate of the Green's function of the form
    \[G(X,t,Z,\tau)\lesssim \frac{1}{|t-\tau|^{n/2}}\exp\Big(-c\frac{|X-Z|^2}{|t-\tau|}\Big).\]
\end{lemma}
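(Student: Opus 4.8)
The plan is to reduce the asserted bound to the classical Nash--Aronson Gaussian estimate for the fundamental solution on the whole space $\R^{n+1}$ by a domain-monotonicity (maximum principle) argument; in particular the chord-arc / uniform structure of $\mathcal{O}$ plays no role here. The point is that $G$ is the Dirichlet Green's function of $L$ on the subdomain $\Omega\subset\R^{n+1}$, so it is dominated by the Green's function of the ambient space, which has no lateral boundary and is therefore just a fundamental solution for which Gaussian upper bounds are classical.

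First I would extend the matrix $A$ to a bounded elliptic matrix $\tilde A$ on $\R^n\times\R$ (say $\tilde A=A$ on $\Omega$ and $\tilde A=\lambda\,\mathrm{Id}$ elsewhere, keeping the ellipticity and sup bounds under control), and let $\Gamma$ denote the fundamental solution of $\tilde L:=\partial_t-\Div(\tilde A\nabla\cdot)$ on $\R^{n+1}$. By Aronson's theorem, $\Gamma\ge 0$ and
\[\Gamma(X,t,Z,\tau)\lesssim \frac{1}{|t-\tau|^{n/2}}\exp\Big(-c\frac{|X-Z|^2}{|t-\tau|}\Big),\]
with constants depending only on $n$, $\lambda$ and $\|A\|_\infty$. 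Next I would show $0\le G\le \Gamma$ on $\Omega$. Fix the pole $(X,t)\in\Omega$ and set $w(Z,\tau):=\Gamma(X,t,Z,\tau)-G(X,t,Z,\tau)$. As functions of $(Z,\tau)$, both $\Gamma(X,t,\cdot)$ and $G(X,t,\cdot)$ solve the same equation with a unit source at $(X,t)$ (note $\tilde L=L$ inside $\Omega$, and the Green's function of $\tilde L$ restricted to $\Omega$ coincides with $G$ by uniqueness), so the leading singularities cancel and $w$ extends across $(X,t)$ as a weak solution of the homogeneous equation in all of $\Omega$. On the lateral boundary $\partial\Omega=\partial\mathcal{O}\times\R$ we have $G(X,t,\cdot)=0$ by \refprop{prop:GreensFctProperties} while $\Gamma\ge 0$, hence $w\ge 0$ there; and both $\Gamma(X,t,\cdot)$ and $G(X,t,\cdot)$ tend to $0$ as $|Z|+|\tau|\to\infty$ (for $\Gamma$ by the Aronson bound; for $G$ by \refprop{prop:GreensFctProperties}(5) in the interior together with \reflemma{lemma:BoundaryHölder} near $\partial\Omega$ and \reflemma{lemma:Cacc}). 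Applying the parabolic maximum principle to $w$ on an exhaustion of $\Omega$ by bounded subcylinders — the lateral boundary contributions being nonnegative, the top/bottom ones vanishing in the limit — gives $w\ge 0$, i.e.\ $G\le\Gamma$, and combining with the displayed estimate finishes the proof.

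The main obstacle is making the maximum principle rigorous on the unbounded domain $\Omega$: one needs the decay of $G(X,t,\cdot)$ and $\Gamma$ at spatial and temporal infinity to be uniform enough to pass to the limit in the exhaustion, which is exactly where \refprop{prop:GreensFctProperties}(5), \reflemma{lemma:BoundaryHölder} and \reflemma{lemma:Cacc} are used; one also leans on the (classical but nontrivial) Aronson bound for the constant-domain fundamental solution.

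If a self-contained argument is preferred, one can avoid citing Aronson: the on-diagonal bound $G(X,t,Z,\tau)\lesssim|t-\tau|^{-n/2}$ follows from the Nash inequality on $\mathcal{O}$ — valid with the $\R^n$ constant by extension by zero — applied to the evolution family generated by $L$ on $L^2(\mathcal{O})$ (whose kernel is $G$), and the Gaussian factor then comes from Davies' exponential-perturbation method: conjugating the propagator by $e^{\psi}$ for a bounded Lipschitz $\psi$ and running the energy estimate yields $|G(X,t,Z,\tau)|\le C\,e^{\psi(X)-\psi(Z)}|t-\tau|^{-n/2}e^{C\|\nabla\psi\|_\infty^2|t-\tau|}$, and optimizing over $\psi$ (a Lipschitz truncation of $-\lambda|\cdot-Z|$ with $\lambda\sim|X-Z|/|t-\tau|$) produces $\exp(-c|X-Z|^2/|t-\tau|)$. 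A third route, entirely within the tools above, is to extract the exponential directly from \reflemma{lemma:HarnackInequality}, whose constant already carries the factor $\exp(c|Y-Z|^2/|s-\tau|)$, by chaining $G(X,t,Z,\tau)$ to the value of $G(X,t,\cdot)$ at a point above the pole at a later time and then using the on-diagonal bound there; boundary effects make this version more delicate, so I would implement the comparison approach.
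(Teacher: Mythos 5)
The paper offers no proof of this lemma---it simply cites Theorem~3.1 of the reference---so there is no in-paper argument to compare against; the question is only whether your proof works. Your comparison argument is the standard route to such bounds and is correct in outline: dominate the Dirichlet Green's function $G$ on $\Omega$ by the fundamental solution $\Gamma$ of an elliptic extension $\tilde L$ of $L$ to all of $\R^{n+1}$ via the parabolic maximum principle (the singularities of $\Gamma$ and $G$ cancel in $w=\Gamma-G$), then invoke Aronson's Gaussian upper bound for $\Gamma$. You are also right that the chord-arc/uniform structure of $\mathcal{O}$ plays no role in this estimate.

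Two points need tightening before the proof is airtight. First, the parenthetical ``the Green's function of $\tilde L$ restricted to $\Omega$ coincides with $G$ by uniqueness'' is confusingly phrased; what you actually need is only that $\Gamma(X,t,\cdot)$ and $G(X,t,\cdot)$ satisfy the same weak identity $\int_\Omega A\nabla(\cdot)\cdot\nabla\Phi-(\cdot)\partial_s\Phi\,dY\,ds=\Phi(X,t)$ for all $\Phi\in C_c^\infty(\Omega)$, so that $w$ is a weak solution of $Lw=0$ in all of $\Omega$. Second, and more substantively, when $\mathcal{O}$ is unbounded the exhaustion by bounded subcylinders introduces an artificial spatial boundary $(\mathcal{O}\cap\partial B(0,R))\times(t,T)$ which is \emph{not} part of $\partial\Omega$ and on which $w$ has no a priori sign; the maximum principle then yields only $w\ge-\sup_{(\mathcal{O}\cap\partial B(0,R))\times(t,T)}|w|$, and you must show this supremum tends to $0$ as $R\to\infty$. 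The tools you cite do not directly furnish this decay: item~(5) of \refprop{prop:GreensFctProperties} is a near-pole estimate, and \reflemma{lemma:BoundaryHölder} controls decay toward $\partial\Omega$, not the tail $|Z|\to\infty$ at interior points. An additional ingredient is required---for instance a Davies-type weighted-energy bound for the propagator, or combining the global integrability of $G(X,t,\cdot)$ away from the pole with the interior Cacciopolli and Harnack estimates. Your second route via the Nash inequality and Davies' exponential perturbation supplies exactly this and is in fact the cleaner, fully self-contained argument; with either supplement the proof is complete.
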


We use this growth estimate and a parabolic version of Boundary Hölder inequality (cf. \cite{fabes_backward_1986} or \cite{nystrom_dirichlet_1997}) to get a refined growth estimate that improves the previous one in certain situations.
\begin{lemma}\label{lemma:ParabolicGrwothEstimateForGwithalpha}
    There exists \(\alpha=\alpha(\lambda, n)\) such that for \((X,t),(Y,s)\in\Omega\) with \(t<s\)
    \begin{align}G(X,t,Y,s)\lesssim \delta(X,t)^\alpha \big(|X-Y|^2+(s-t)\big)^{-\frac{n+\alpha}{2}}.\label{eq:ParabolicGrwothEstimateForGwithalpha}\end{align}
\end{lemma}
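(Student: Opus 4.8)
The goal is to combine the two preceding results: the pointwise growth estimate from Lemma~\ref{lemma:ParabolicGrowthEstimateForG}, namely
\[G(X,t,Z,\tau)\lesssim |t-\tau|^{-n/2}\exp\!\big(-c|X-Z|^2/|t-\tau|\big),\]
and the boundary Hölder decay from Lemma~\ref{lemma:BoundaryHölder}, which says that a nonnegative solution vanishing on a boundary ball gains a factor $(\delta/r)^\alpha$. The plan is to fix $(Y,s)$ and regard $(X,t)\mapsto G(X,t,Y,s)$ as a nonnegative solution of the adjoint equation $L^*$ in $\Omega$ that vanishes continuously on $\partial\Omega$ away from $(Y,s)$ (this is the standard symmetry/property of the Green's function; the vanishing on the boundary is exactly what makes Lemma~\ref{lemma:BoundaryHölder} applicable). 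One then wants to run the boundary Hölder estimate on an appropriate scale and feed in the Gaussian bound as the ``$\max$'' on the right-hand side.

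Concretely, I would set $r:=\tfrac12\big(|X-Y|^2+(s-t)\big)^{1/2}$, the (parabolic) half-distance, and distinguish two regimes. If $\delta(X,t)\ge r$, then $(X,t)$ is an interior point at scale comparable to $r$, and the claimed bound $\delta(X,t)^\alpha(|X-Y|^2+(s-t))^{-(n+\alpha)/2}$ is (up to constants) just $r^{-n}\cdot(\delta/r)^\alpha \gtrsim r^{-n}$ which follows directly from the Gaussian bound of Lemma~\ref{lemma:ParabolicGrowthEstimateForG} since $|t-s|\approx r^2$ there (one only needs $|t-s|\gtrsim r^2$, which holds because $s-t$ and $|X-Y|^2$ together sum to $4r^2$; if $s-t$ itself is small one uses that $u$ is bounded by its values at a nearby interior point via Harnack, Lemma~\ref{lemma:HarnackInequality}, moved to a time level where $s-t\approx r^2$). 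If instead $\delta(X,t)<r$, choose a boundary point $(X^*,t)\in\partial\Omega$ with $|X-X^*|=\delta(X,t)$, and apply Lemma~\ref{lemma:BoundaryHölder} to $G(\cdot,Y,s)$ on $T(\Delta(X^*,t,2r'))$ for a suitable $r'\approx r$: since $(Y,s)$ is at parabolic distance $\approx r$ from $(X,t)$ one can arrange that $(Y,s)\notin T(\Delta(X^*,t,2r'))$, so $G(\cdot,Y,s)$ is a genuine nonnegative solution vanishing on that boundary ball, and Lemma~\ref{lemma:BoundaryHölder} gives
\[G(X,t,Y,s)\lesssim\Big(\frac{\delta(X,t)}{r}\Big)^{\alpha}\max_{T(\Delta(X^*,t,r'))}G(\cdot,Y,s).\]
The maximum is then controlled by $r^{-n}$ using Lemma~\ref{lemma:ParabolicGrowthEstimateForG} (every point $(Z,\tau)\in T(\Delta(X^*,t,r'))$ has $|\tau-s|\approx r^2$ and $|Z-Y|^2\lesssim r^2$, so the Gaussian factor is $O(1)$ and the prefactor is $\lesssim r^{-n}$), which rearranges to exactly $\delta(X,t)^\alpha r^{-n-\alpha}\approx\delta(X,t)^\alpha(|X-Y|^2+(s-t))^{-(n+\alpha)/2}$.

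The main technical obstacle is the bookkeeping around the time variable: Lemma~\ref{lemma:BoundaryHölder} is stated for solutions, and to use it I must know that $(X,t)$ lies in $T(\Delta(X^*,t,r'/2))$ and that $(Y,s)$ lies outside $T(\Delta(X^*,t,2r'))$ so that $G(\cdot,Y,s)$ actually solves the adjoint equation and vanishes there — this forces a careful choice of $r'$ comparable to but not exactly $r$, and one has to check the parabolic (anisotropic) metric balls genuinely separate the two points. A secondary subtlety is the degenerate case $s-t\ll|X-Y|^2$, where the ``forward in time'' corkscrew point $\bar A$ and Harnack's inequality (Lemma~\ref{lemma:HarnackInequality}) are needed to shift to a time slice where the Gaussian bound is not wasteful; this is where the hypothesis $t<s$ is used. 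Once these geometric reductions are in place, the estimate is a direct two-line combination of the two cited lemmas.
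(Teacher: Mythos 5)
Your approach is exactly the one the paper indicates: it explicitly says the lemma is "a combination of Boundary H\"older and the growth estimate" and defers the details to the thesis, and your two-case split at $\delta(X,t)\approx r$ with $r\approx(|X-Y|^2+(s-t))^{1/2}$, applying Lemma~\ref{lemma:BoundaryHölder} to $(X,t)\mapsto G(X,t,Y,s)$ as an adjoint solution vanishing on the boundary and feeding the Gaussian bound of Lemma~\ref{lemma:ParabolicGrowthEstimateForG} into the $\max$ term, is the intended argument. One small imprecision: the appeal to Harnack's inequality to "shift to a time slice where the Gaussian bound is not wasteful" when $s-t\ll|X-Y|^2$ is unnecessary, since the elementary bound $\sup_{a>0}\,a^{-n/2}e^{-cb/a}\lesssim_c b^{-n/2}$ shows the Gaussian estimate already yields $G\lesssim(|X-Y|^2+(s-t))^{-n/2}$ uniformly; the hypothesis $t<s$ is only used to ensure $G\neq 0$. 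This does not affect the validity of the argument.
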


The proof stems from private communications with Martin Dindo\v{s}, Linhan Li, and Jill Pipher and can be found as Lemma 2.8.17 in \cite{ulmer_l_2024} or in \cite{dindos_lp_2025}.
\medskip

At this point we would like to stress that with exception of the Boundary Hölder inequality, none of the above results track the boundary, and hence all of them apply in settings with rougher \(\mathcal{O}\). However, all the following properties have not previously appeared on parabolic cylinders with merely uniform base \(\mathcal{O}\). First, let us mention that the Boundary Hölder inequality has been proven as Lemma 2.3.1 in \cite{cho_boundary_2005} and Lemma 2.7.18 in \cite{ulmer_l_2024}, where the comparison to the function value at a corkscrew point is added. The thesis \cite{ulmer_l_2024} also contain Boundary Cacciopolli inequality (cf. Lemma 2.7.15), Comparison Principle (cf. Lemma 2.8.19), Backwards Harnack inequality for the Green's function (cf. Lemma 2.8.23), and 
Local Comparison Principle (cf. Lemma 2.7.18). All of these results are proved on parabolic cylinders with uniform \(\mathcal{O}\). Since this rougher setting is not the focus of this article, we omit discussing the details of these results and refer the interested reader to \cite{ulmer_l_2024}. However, this supports the claim in the introduction about the possibility to extend the main results of this article to rougher settings.

\section{Proof of \refthm{MainTheorem} for unbounded base \(\mathcal{O}\)}\label{section:UnboundedMainProof}

First, we will restrict to the case where we assume that \(\mathcal{O}\) is unbounded. The bounded base case works similarly and we will discuss its required adaptations in the last section.

\subsection{Structure of the proof of \refthm{MainTheorem}}\label{subsection:UnboundedStructure}

First, let \(f\in \dot{H}^{1/4}_{\partial_t-\Delta}(\partial\Omega)\cap \dot{L}^{q_0}_{1,1/2}(\partial\Omega)\) be the boundary data for the Regularity problem of \(L_0\) and \(L_1\)
and \(u_0\) and \(u_1\) the corresponding solutions. Assume without loss of generality that \(0=(0,0)\in \partial\Omega\).

To proof \refthm{MainTheorem}, we consider a truncated version of the nontangential maximal function first. Using truncated versions first allows us to justify the convergence of all involved integrals and is of purely technical nature.
\begin{defin}
    We define the truncated nontangential maximal function
    \[\tilde{N}_{1,\rho}[g](P,t):=\sup_{(Y,s)\in \Gamma^{\rho}(P,t)}\fint_{B(Y,s,\delta(Y,s)/2)}|g(X,\tau)|dXd\tau, \]
    where 
    \[\Gamma^{\rho}(P,t):=\Gamma(P,t)\cap Q(0,0,\frac{1}{\rho})\cap \{(X,s):\delta(X,s)>\rho\}.\]
\end{defin}

We set \(\EPS:=A_0-A_1\). We also set \(F:=u_1-u_0\) as the difference function with its explicit representation given in the following lemma.
\begin{lemma}\label{lemma:defF}
    We set \(F:=u_1-u_0\). Then
    \begin{align*}F(X,t)&=\int_\Omega \EPS(Y,s)\nabla u_0(Y,s)\cdot\nabla_Y G_1^*(X,t,Y,s) dYds
    \\
    &=\int_\Omega \EPS(Y,s)\nabla u_0(Y,s)\cdot\nabla_Y G_1(Y,s,X,t) dYds.
    \end{align*}
\end{lemma}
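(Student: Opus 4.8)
The plan is to derive the integral representation for $F = u_1 - u_0$ by testing the equations for $u_0$ and $u_1$ against the Green's function $G_1$ (or its adjoint $G_1^*$), exploiting that the two solutions share the same boundary data $f$ and that the difference of the operators is captured by the discrepancy $\EPS = A_0 - A_1$.

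First I would record that both $u_0, u_1 \in V$ are weak solutions of $L_i u_i = 0$ with $u_i = f$ on $\partial\Omega$, so $F = u_1 - u_0$ vanishes on $\partial\Omega$ and satisfies, in the weak sense,
\[
L_1 F = L_1 u_1 - L_1 u_0 = -L_1 u_0 = -(L_1 - L_0)u_0 = \Div\big((A_1 - A_0)\nabla u_0\big) = -\Div(\EPS\nabla u_0),
\]
using that $L_0 u_0 = 0$. In other words $F$ is a weak solution of $L_1 F = \Div F$ with $F = -\EPS\nabla u_0$ in the notation of the weak formulation (slight abuse of the letter $F$). Then, formally, the Green's function representation gives
\[
F(X,t) = -\int_\Omega (-\EPS(Y,s)\nabla u_0(Y,s))\cdot \nabla_Y G_1(Y,s,X,t)\, dYds = \int_\Omega \EPS\nabla u_0 \cdot \nabla_Y G_1(Y,s,X,t)\, dYds,
\]
and the two displayed forms in the lemma are then identified via the duality relation $G_1(Y,s,X,t) = G_1^*(X,t,Y,s)$ between the Green's function of $L_1$ and that of its adjoint $L_1^*$ (recall $\nabla$ always means the spatial gradient, so $\nabla_Y$ makes sense in both expressions).

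The key steps, in order: (1) justify that $F$ vanishes continuously on $\partial\Omega$ and lies in the right space so that it can legitimately be paired with $(1-\eta)G_1(X,t,\cdot)$ — here I would invoke \refprop{prop:VimpliesEandV_2} to upgrade the $V$-membership and the boundary Hölder/Cacciopolli machinery (\reflemma{lemma:BoundaryHölder}, \reflemma{lemma:BoundaryCacciopolli}) for decay near $\partial\Omega$; (2) use the defining property of $G_1$ from \refprop{prop:GreensFctProperties}, namely $\int_\Omega A_1 \nabla G_1(X,t,\cdot)\cdot\nabla\Phi - G_1(X,t,\cdot)\partial_s\Phi = \Phi(X,t)$, with $\Phi$ replaced (after a density/cutoff argument) by $F$; (3) on the other side, use the weak formulation of $L_1 F = -\Div(\EPS\nabla u_0)$ tested against $G_1(X,t,\cdot)$; (4) subtract, observing the symmetric bilinear terms $\int A_1\nabla G_1\cdot\nabla F - G_1\partial_s F$ and $\int A_1\nabla F\cdot\nabla G_1 - F\partial_s G_1$ — wait, these are not literally equal, so one must integrate the time term by parts, picking up no boundary contribution in $s$ because $G_1(X,t,Y,s)=0$ for $s<t$ and $F, G_1$ decay, turning $-\int G_1 \partial_s F$ into $+\int F\partial_s G_1$; the spatial parts match because $A_1$ is the same operator on both sides. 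What survives is exactly $F(X,t) = \int_\Omega \EPS\nabla u_0 \cdot \nabla_Y G_1(Y,s,X,t)\,dYds$.

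The main obstacle I anticipate is the rigorous justification of steps (1)–(3): $G_1(X,t,\cdot)$ is not an admissible test function (it is not in $C_0^\infty(\Omega)$, and it is singular at $(X,t)$), so one must run the standard approximation — remove a small parabolic cube $Q(X,t,r)$ around the pole using the cutoff $\eta$ from \refprop{prop:GreensFctProperties}(3), multiply by a large-scale spatial/temporal cutoff, pass each term to the limit, and control the error over the small cube using the $L^p$ bounds on $G_1$ near its pole (\refprop{prop:GreensFctProperties}(4)) together with interior regularity of $u_0$ (\reflemma{lemma:Cacc}) to see $\nabla u_0$ is locally bounded there. One also needs the integral $\int_\Omega |\EPS||\nabla u_0||\nabla_Y G_1|$ to converge, which is where the Carleson hypothesis on $\mu$ and the growth estimates (\reflemma{lemma:ParabolicGrowthEstimateForG}, \reflemma{lemma:ParabolicGrwothEstimateForGwithalpha}) for $\nabla G_1$ near $\partial\Omega$ enter; this finiteness is plausibly deferred to or folded into the later estimates, but at minimum local integrability plus decay at infinity must be checked so that the representation is not merely formal. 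I would treat the duality identity $G_1(Y,s,X,t) = G_1^*(X,t,Y,s)$ as standard (it follows from testing the defining relations of $G_1$ and $G_1^*$ against each other).
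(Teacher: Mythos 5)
The overall strategy — compute the PDE satisfied by $F = u_1 - u_0$, then invoke a Green's function representation — is the standard approach (and the paper itself defers the rigorous details to Lemma~4.2.3 of \cite{ulmer_l_2024}). Your algebra showing $L_1F = -\Div(\EPS\nabla u_0)$ is correct, as is your informal statement of the resulting formula.

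However, step (4) of your plan contains a genuine error. You propose to pair $F$ with $G_1(X,t,\cdot)$, the Green's function of $L_1$, and to cancel the resulting bilinear forms. But the two spatial terms $\int A_1\nabla G_1\cdot\nabla F$ and $\int A_1\nabla F\cdot\nabla G_1$ do \emph{not} "match because $A_1$ is the same operator": $\int A_1\nabla G_1\cdot\nabla F = \int \sum_{ij}(A_1)_{ij}\partial_j G_1\,\partial_i F$ while $\int A_1\nabla F\cdot\nabla G_1 = \int\sum_{ij}(A_1)_{ij}\partial_j F\,\partial_i G_1$; these differ by the transpose of $A_1$, and the paper makes no symmetry assumption on $A_1$. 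Moreover, even if $A_1$ were symmetric, the time terms acquire \emph{opposite} signs after your integration by parts ($+\int F\partial_s G_1$ from the $G_1$-defining relation versus $-\int F\partial_s G_1$ from the weak formulation of $L_1F$), so the two sides still would not coincide and the representation would not drop out. The correct test function is $G_1^*(X,t,\cdot)$, the Green's function of the \emph{adjoint} $L_1^*$ with pole at $(X,t)$ (which by duality equals $G_1(\cdot,\cdot,X,t)$, matching the second displayed form in the lemma). Its defining relation reads $\int A_1^*\nabla_Y G_1^*(X,t,\cdot)\cdot\nabla_Y\Phi + G_1^*(X,t,\cdot)\partial_s\Phi = \Phi(X,t)$, with a \emph{plus} sign on the time term; taking $\Phi = F$, integrating by parts in $s$, and using the identity $A_1^*\nabla G_1^*\cdot\nabla F = A_1\nabla F\cdot\nabla G_1^*$ (true for \emph{any} $A_1$ by transposition, no symmetry needed), you obtain $\int A_1\nabla F\cdot\nabla G_1^* - F\partial_s G_1^* = F(X,t)$. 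This is term-for-term identical to the weak formulation of $L_1 F = \Div(-\EPS\nabla u_0)$ tested against $\Phi = G_1^*(X,t,\cdot)$, and the representation follows. Once you swap $G_1$ for $G_1^*$ in steps (2)–(4), the rest of your plan (cutting off near the pole, controlling the error via \refprop{prop:GreensFctProperties}(4) and interior Caccioppoli, checking global integrability) is the right program.
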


Apart from minor adjustments the proof of this result is a fairly standard result that is well known in the elliptic and parabolic setting (cf. \cite{kenig_neumann_1995}, \cite{fefferman_theory_1991}, \cite{milakis_harmonic_2013}, \cite{hofmann_dirichlet_2001}, \cite{dindos_perturbation_2023} and more). In our specific case the rigorous proof can also be found as Lemma 4.2.3 in \cite{ulmer_l_2024}.

\smallskip

Next, we need the following lemma whose proof we are also going to omit. In our parabolic setting, the proof can be found as Lemma 4.2.4 in \cite{ulmer_l_2024}, but it does follow almost verbatim the proof in the elliptic setting presented in \cite{kenig_neumann_1995}.

\begin{lemma}\label{lemma:defh}
    There are quasi-dualising functions \(h=h^\rho\in L^\infty(\Omega)\) such that
    \begin{align*}
        \Vert\tilde{N}_{1,\rho}[\nabla F]\Vert_{L^q(\partial\Omega)}&\lesssim \int_\Omega \nabla F\cdot h dXdt,
    \end{align*}
    where 
    \begin{align*}
        h(Z,\tau):&=\int_{\partial\Omega}g(P,t)\Big(\int_{\Gamma^{\varepsilon}(x,t)} \beta(P,t,Y,s)\frac{\alpha(Y,s,Z,\tau)}{\delta(Y,s)^{n+2}}\chi_{B(Y,s,\delta(Y,s)/2)}(Z,\tau)dYds\Big)dPdt,
    \end{align*}
    
    with 
    \begin{itemize}
        \item \(\Vert\alpha(Y,s,\cdot)\Vert_{L^\infty(B(Y,s,\delta(Y,s)/2))}=1\),
        \item \(\Vert\beta(P,t,\cdot)\Vert_{L^1(\Gamma_\rho(P,t))}=1\),
        \item \(\Vert g\Vert_{L^{q'}(\partial\Omega)}=1\).
    \end{itemize}

    It follows that \(\mathrm{supp}(h^\rho)\subset Q(0, \frac{1}{\rho})\cap\{(X,s): \delta(X,s)>\rho\}.\) 
\end{lemma}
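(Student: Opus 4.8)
The plan is a two-stage duality argument followed by a Fubini step. Stage one dualises the $L^q$-norm over $\partial\Omega$. Since $\tilde N_{1,\varepsilon}[\nabla F]$ is truncated it is bounded and compactly supported: only $(P,t)$ with $\Gamma^\varepsilon(P,t)\neq\emptyset$ contribute, and for such $(P,t)$ every Whitney ball $B(Y,s,\delta(Y,s)/2)$ with $(Y,s)\in\Gamma^\varepsilon(P,t)$ lies in a fixed compact subset of $\Omega$ on which $\nabla F\in L^2$ (recall $F=u_1-u_0\in V$), while $|B(Y,s,\delta(Y,s)/2)|\approx\delta(Y,s)^{n+2}\gtrsim\varepsilon^{n+2}$. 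Hence $\tilde N_{1,\varepsilon}[\nabla F]\in L^q(\partial\Omega)$, and I fix $0\le g\in L^{q'}(\partial\Omega)$ with $\Vert g\Vert_{L^{q'}}=1$, supported where $\tilde N_{1,\varepsilon}[\nabla F]>0$, such that $\Vert\tilde N_{1,\varepsilon}[\nabla F]\Vert_{L^q(\partial\Omega)}=\int_{\partial\Omega}g\,\tilde N_{1,\varepsilon}[\nabla F]\,dPdt$.

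For stage two I dualise, at each boundary point, both the supremum and the average. Fix $(P,t)$ with $\Gamma^\varepsilon(P,t)\neq\emptyset$ and write $\Phi(Y,s):=\fint_{B(Y,s,\delta(Y,s)/2)}|\nabla F|\,dXd\tau$; by dominated convergence $\Phi$ is continuous on $\{\delta\ge\varepsilon\}$ (the Whitney balls vary continuously and $\nabla F\in L^1_{\mathrm{loc}}$). I then select a small ball $B^{\ast}(P,t)\subset\Gamma^\varepsilon(P,t)$ on which $\Phi\ge\tfrac12\sup_{\Gamma^\varepsilon(P,t)}\Phi=\tfrac12\tilde N_{1,\varepsilon}[\nabla F](P,t)$ and put $\beta(P,t,\cdot):=|B^{\ast}(P,t)|^{-1}\chi_{B^{\ast}(P,t)}$, so that $\Vert\beta(P,t,\cdot)\Vert_{L^1(\Gamma^\varepsilon(P,t))}=1$ and $\int\beta(P,t,\cdot)\,\Phi\ge\tfrac12\tilde N_{1,\varepsilon}[\nabla F](P,t)$. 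Simultaneously, for $(Y,s)$ with $\delta(Y,s)>\varepsilon$ I put $\alpha(Y,s,X,\tau):=\nabla F(X,\tau)/|\nabla F(X,\tau)|$ on $B(Y,s,\delta(Y,s)/2)$ (and $0$ where $\nabla F=0$), so $\Vert\alpha(Y,s,\cdot)\Vert_{L^\infty(B(Y,s,\delta(Y,s)/2))}\le1$ and, since $|B(Y,s,\delta(Y,s)/2)|\approx\delta(Y,s)^{n+2}$,
\[
\Phi(Y,s)\approx\delta(Y,s)^{-n-2}\int_\Omega\nabla F(X,\tau)\cdot\alpha(Y,s,X,\tau)\,\chi_{B(Y,s,\delta(Y,s)/2)}(X,\tau)\,dXd\tau.
\]

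Chaining the two dualisations and applying Fubini (all integrands are bounded with compact support) gives
\[
\Vert\tilde N_{1,\varepsilon}[\nabla F]\Vert_{L^q(\partial\Omega)}\le2\int_{\partial\Omega}g(P,t)\int_{\Gamma^\varepsilon(P,t)}\beta(P,t,Y,s)\,\Phi(Y,s)\,dYds\,dPdt\lesssim\int_\Omega\nabla F\cdot h\,dXdt,
\]
with $h$ exactly the function in the statement. It then remains only to verify $h\in L^\infty(\Omega)$ and the support inclusion. The support claim is a direct geometric computation: $(X,\tau)\in B(Y,s,\delta(Y,s)/2)$ with $(Y,s)\in\Gamma^\varepsilon(P,t)$ forces $\delta(X,\tau)\ge\tfrac12\delta(Y,s)>\tfrac\varepsilon2$ and places $(X,\tau)$ within parabolic distance $\lesssim\varepsilon^{-1}$ of the origin. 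For the $L^\infty$ bound, fix $(X,\tau)$ in the support; only $(Y,s)$ in a set of volume $\lesssim\delta(X,\tau)^{n+2}$ and $(P,t)$ in a set of $\sigma$-measure $\lesssim\delta(X,\tau)^{n+1}$ contribute, so combining $\int\beta\le1$, $\Vert\alpha\Vert_\infty\le1$, $\delta(X,\tau)\gtrsim\varepsilon$ with H\"older's inequality yields $|h(X,\tau)|\lesssim_\varepsilon\Vert g\Vert_{L^{q'}}=1$.

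I expect the one genuinely delicate point to be the measurability of $(P,t)\mapsto\beta(P,t,\cdot)$, i.e. of the selected near-maximizing Whitney ball $B^{\ast}(P,t)$, since this is exactly what legitimises the Fubini step. I would handle it by a measurable selection argument: cover the relevant compact region by finitely many small balls and, for each $(P,t)$, take the one on which $\Phi$ is nearly maximal over $\Gamma^\varepsilon(P,t)$, a Borel-measurable prescription. Everything else is bookkeeping.
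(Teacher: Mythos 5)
Your proposal is essentially the same duality argument that the paper defers to (Lemma~4.2.4 of \cite{ulmer_l_2024}, following the elliptic argument in \cite{kenig_neumann_1995}): dualise the $L^q$ norm to produce $g$, dualise the supremum in the cone by a normalised $\beta$ concentrated near a point where $\Phi$ is within a factor of $2$ of its supremum, and dualise the $L^1$ average over the Whitney ball by the unit vector field $\alpha=\nabla F/|\nabla F|$, so that $\nabla F\cdot\alpha=|\nabla F|$; Tonelli (all factors nonnegative with your choices) then assembles $h$ in the stated form, and the $L^\infty$ bound and support containment follow from the geometry of the truncated cone exactly as you describe. The only point that deserves a touch more care than your sketch gives it is the measurable selection for $\beta$: rather than covering by finitely many balls and selecting one, it is cleaner to take $\beta(P,t,\cdot)$ to be the normalised indicator of the open set $\{(Y,s)\in\Gamma^\varepsilon(P,t):\Phi(Y,s)>\tfrac12\sup_{\Gamma^\varepsilon(P,t)}\Phi\}$, which is nonempty and open by continuity of $\Phi$ on the compact closure of $\Gamma^\varepsilon(P,t)$ and depends on $(P,t)$ in a manifestly measurable way; this avoids the uniform-continuity/small-ball issue in your version when $\sup_{\Gamma^\varepsilon(P,t)}\Phi$ is close to zero. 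Also, the stated numerical constant $Q(0,\tfrac{2}{\EPS})$ in the support inclusion does not hold literally for all dimensions with your (correct) estimate $|(Z,\tau)-(0,0)|_p\lesssim\varepsilon^{-1}$, but that is an imprecision of the statement, not of your argument.
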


We set \(\omega^\rho\) as the parabolic measure of \(L_1^*\) with pole in the backward in time corkscrew point \(\underline{A}(8\Delta_\rho)\) where \(\Delta_\rho:=\Delta(0, \frac{1}{\rho})\). We will need the following function \(v\) that solves \eqref{vDefinedByDivh}.

\begin{lemma}\label{lemma:defv}
    Let \(v\in \dot{E}(\Omega)\) solve the inhomogeneous parabolic PDE
    \begin{align}\begin{cases}
        L_1^* v=\Div_X(h) & \textrm{ in }\Omega, \\
        v=0 & \textrm{ on }\partial\Omega.
    \end{cases}\label{vDefinedByDivh}\end{align}
     Then, for all \(1<q<\infty\), we have 
     \[\Vert \tilde{N}_{1,\rho}(\nabla F)\Vert_{L^q(\partial\Omega)}\lesssim \Vert \mu\Vert_{\mathcal{C}}\Vert\tilde{N}_{1,\rho}(\nabla u_0)\Vert_{L^q(\partial\Omega)}\Vert S(v)\Vert_{L^{q'}(\partial\Omega)}.\]
\end{lemma}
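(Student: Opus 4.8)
The starting point is to combine Lemma \ref{lemma:defF} and Lemma \ref{lemma:defh}. By Lemma \ref{lemma:defh},
\[
\|\tilde N_{1,\varepsilon}[\nabla F]\|_{L^q(\partial\Omega)}
\lesssim \int_\Omega \nabla F\cdot h\, dXdt,
\]
and the key is to rewrite the right-hand side using that $v$ solves the dual problem \eqref{vDefinedByDivh}. Since $v$ vanishes on $\partial\Omega$ and $L_1^*v=\Div_X(h)$, testing the weak formulation against $F$ (which also vanishes on $\partial\Omega$, being a difference of two solutions with the same boundary data) and integrating by parts gives
\[
\int_\Omega \nabla F\cdot h\, dXdt
= -\int_\Omega A_1\nabla v\cdot\nabla F\, dXdt + (\text{time term that cancels}),
\]
and then using $L_1 F = -\Div_X(\varepsilon\nabla u_0)$ (which follows from Lemma \ref{lemma:defF} or directly from $L_1u_1=L_0u_0=0$), another integration by parts moves everything onto $u_0$:
\[
\int_\Omega \nabla F\cdot h\, dXdt
= \int_\Omega \varepsilon(Y,s)\,\nabla u_0\cdot\nabla v\, dYds.
\]
So the whole estimate reduces to bounding $\big|\int_\Omega \varepsilon\,\nabla u_0\cdot\nabla v\big|$.

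The remaining step is a Carleson-measure / square-function argument. Write, using $|\varepsilon(Y,s)|^2 = \delta(Y,s)\, \tfrac{d\mu}{dYds}$,
\[
\Big|\int_\Omega \varepsilon\,\nabla u_0\cdot\nabla v\, dYds\Big|
\le \int_\Omega |\nabla u_0|\,|\nabla v|\,\frac{|\varepsilon|}{\sqrt{\delta}}\sqrt{\delta}\, dYds
\le \Big(\int_\Omega |\nabla u_0|^2\, d\tilde\mu\Big)^{1/2}\Big(\int_\Omega |\nabla v|^2\,\delta\, dYds\Big)^{1/2},
\]
where one distributes the weight $\tfrac{|\varepsilon|^2}{\delta}\,dYds = d\mu$ appropriately; more precisely I would split $|\varepsilon|^2/\delta\, dYds$ as the Carleson measure $d\mu$ multiplied against $|\nabla u_0|^2$, and keep a factor $\delta\, dYds$ against $|\nabla v|^2$. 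For the first factor, a standard Carleson-measure argument (Carleson's inequality, e.g. in the parabolic form underlying Proposition \ref{prop:ParabolicPerturbationTheoryForDP}) together with the fact that $\nabla u_0$ at an interior point is controlled by its average, which is $\le \tilde N_{1,\varepsilon}(\nabla u_0)$ on the associated boundary patch, yields
\[
\int_\Omega |\nabla u_0|^2\, d\mu \lesssim \|\mu\|_{\mathcal C}\,\|\tilde N_{1,\varepsilon}(\nabla u_0)\|_{L^q(\partial\Omega)}\,\|\cdots\|_{L^{q'}},
\]
and here the $L^q$–$L^{q'}$ splitting with exponents $q, q'$ comes in: one runs the Carleson estimate in the off-diagonal $L^q$ form so that the dual exponent lands on the square function of $v$. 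For the second factor, $\big(\int_\Omega|\nabla v|^2\delta\big)^{1/2}$ is exactly (a multiple of) $\|S(v)\|_{L^2}$, but since we need $L^{q'}$ we instead keep $|\nabla v|^2\delta\,dYds$ paired pointwise (via Fubini over cones) against the boundary integrand and recognize it as $\|S(v)\|_{L^{q'}(\partial\Omega)}$ tested against the $L^q$ function produced above. Concretely I would interchange the spatial integral with the integral over $\partial\Omega$ defining the cones, bound the interior average of $|\nabla u_0|$ by $\tilde N_{1,\varepsilon}(\nabla u_0)(P,t)$ and the remaining $\delta$-weighted integral of $|\nabla v|^2$ over the cone by $S(v)(P,t)^2$, then apply Hölder on $\partial\Omega$ with exponents $q$ and $q'$.

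The main obstacle is carefully justifying the two integrations by parts (the boundary terms vanish, and the time/half-derivative terms cancel because $L_1^*$ is the honest adjoint of $L_1$), plus bookkeeping the truncation: because of the $\tilde N_{1,\varepsilon}$ truncation, $h$ is supported in $Q(0,2/\varepsilon)\cap\{\delta>\varepsilon/2\}$ (Lemma \ref{lemma:defh}), so $v$ is a genuine $V$-solution with compactly supported data and all manipulations are legitimate; this is also what lets us use the interior regularity of $u_0$ without worrying about behaviour at $\partial\Omega$ or at infinity. This is, incidentally, the one lemma whose proof differs between hypotheses \ref{eq:CarlesonWithSupNorm} and \ref{eq:CarlesonWithoutSupNorm}: under \ref{eq:CarlesonWithSupNorm} one must pass from the pointwise factor $|\varepsilon(Y,s)|$ to $\sup_{Q(Y,s,\Delta/2)}|A_0-A_1|$ inside the Carleson average (absorbing the sup into $d\mu$), whereas under \ref{eq:CarlesonWithoutSupNorm} the extra hypothesis $\delta|\nabla A|\le C$ lets one control oscillation of $\varepsilon$ on Whitney boxes directly. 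Once $\big|\int\varepsilon\nabla u_0\cdot\nabla v\big|\lesssim \|\mu\|_{\mathcal C}\|\tilde N_{1,\varepsilon}(\nabla u_0)\|_{L^q}\|S(v)\|_{L^{q'}}$ is established, the lemma follows.
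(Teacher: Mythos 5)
Your high-level structure matches the paper's: reduce to the bilinear form $\int_\Omega \varepsilon\,\nabla u_0\cdot\nabla v$ via the weak formulations of $L_1 F=-\Div(\varepsilon\nabla u_0)$ and $L_1^*v=\Div h$ (this is the paper's \eqref{eq:proofLEmma2.19/1}, established by an approximation argument), and then extract the Carleson norm to land on $\|\tilde N(\nabla u_0)\|_{L^q}\|S(v)\|_{L^{q'}}$. However, the way you propose to carry out the second step does not work. After Fubini onto cones and Cauchy--Schwarz, you are left with a factor $\big(\int_{\Gamma(P,t)}|\varepsilon|^2\delta^{-n-2}\big)^{1/2} = \big(\int_{\Gamma(P,t)}\delta^{-n-1}\,d\mu\big)^{1/2}$, and this cone integral of a Carleson measure is \emph{not} pointwise controlled by $\|\mu\|_{\mathcal C}^{1/2}$: Carleson gives control of mass over tents $T(\Delta)$, not over cones, and the sum over dyadic scales in a cone need not converge geometrically. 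Similarly, the "standard Carleson inequality" $\int|\nabla u_0|^2\,d\mu\lesssim\|\mu\|_{\mathcal C}\|\tilde N(\nabla u_0)\|_{L^2}^2$ you invoke is an $L^2$ statement; to get the $L^q$--$L^{q'}$ bilinear form with the Carleson norm sitting in front, the paper uses a genuine stopping-time decomposition. Concretely, one sets $\mathcal O_j:=\{\tilde N(\nabla u)\,S(v)>2^j\}$ and its maximal-function enlargement $\tilde{\mathcal O}_j$, decomposes $\Omega$ into "staircase" regions $F_j$ living in the tents over $\tilde{\mathcal O}_j$, does Cauchy--Schwarz on each piece, and only then invokes Carleson on the tent $\tilde\Omega(2\tilde{\mathcal O}_j)$ to pick up $\|\mu\|_{\mathcal C}\,\sigma(\tilde{\mathcal O}_j)$; summing the geometric series in $j$ recovers $\int\tilde N(\nabla u)S(v)\,d\sigma$ and Hölder finishes. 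This level-set argument is the real content of the lemma and is entirely absent from your sketch.

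A smaller point: your explanation of why the two hypotheses \ref{eq:CarlesonWithSupNorm} and \ref{eq:CarlesonWithoutSupNorm} require different proofs is not quite right. Under \ref{eq:CarlesonWithoutSupNorm}, the extra condition $\delta|\nabla A|\le C$ does not control the oscillation of $\varepsilon$ on Whitney boxes; rather, it is used (via \reflemma{lemma:nablaUpointwiseBound}, a parabolic analogue of Feneuil--Mayboroda's pointwise bound $|\nabla u|\lesssim u/\delta$) to show that $\nabla u$ is comparable to its Whitney-box average, which lets one separate $\int_{I^+}|\nabla u|^2|\varepsilon|^2\delta^{-n-2}$ as $\big(\fint_{\tilde I^+}|\nabla u|^2\big)\big(\int_{I^+}|\varepsilon|^2\delta^{-n-2}\big)$ without pulling out $\sup_{I^+}|\varepsilon|^2$. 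Under \ref{eq:CarlesonWithSupNorm} one instead pulls out the sup of $|\varepsilon|^2$ directly, which is why the Carleson measure in that case carries the $\sup_{Q(X,t,\delta/2)}$.
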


This proof works by approximation and stopping time arguments and is presented in a later subsection. In fact, in proving this lemma we need to distinguish between whether we assume \eqref{eq:CarlesonWithSupNorm} or \eqref{eq:CarlesonWithoutSupNorm}, but both assumptions lead to the same result.

\smallskip

We introduce the following function that resembles a discretized tent space function (cf. \cite{Coifman_tentspace_85} for more about tent spaces).
\begin{defin} For \(1\leq p<\infty\) we define for \((P,t)\in\partial\Omega\)
    \[T_p(h)(P,t):=\sum_{I\in\mathcal{W}(P,t)}l(I)\Big(\fint_{I^+}|h|^{2p}\Big)^{1/{2p}}.\]
\end{defin}

Up to this point the modifications to the parabolic setting are minor and mostly of technical nature. However, the following two lemmas are key lemmas in the elliptic proof of \cite{kenig_neumann_1995} (or also in \cite{dai_carleson_2022}). Despite that the results look very similar, the parabolic setting complicates the two proofs significantly. The main difficulties are that we have to move the pole of the Green's function with more care in the parabolic setting, and that the decay of the Green's function and \(v\) are different in time than they are in space. Another idea, that appears in \cite{dai_carleson_2022} in the elliptic setting at this point, is to use a Green's function with pole in infinity. However, this Green's function has not been constructed in the parabolic setting, and we avoid its use by essentially moving the pole to a point far away from the compact support of \(h^\rho\).
\begin{lemma}\label{Lemma2.10}
Let \(v\) be as in \eqref{vDefinedByDivh}. Then for have for \(p>\frac{n}{2}+1\) and \((P,t)\in 3\Delta_\rho\)
\[N(v)(P,t)+\tilde{N}(\delta|\nabla v|)(P,t)\lesssim M_{\omega^\rho}(T_p(h))(P,t).\]
Furthermore, we have for 
\((P,t)\in \partial\Omega \setminus3\Delta_\rho\)
\[N(v)(P,t)+\tilde{N}(\delta|\nabla v|)(P,t)\lesssim \inf_{(Y,s)\in 2\Delta_\rho} M_{\omega^\rho}(T_p(h))(Y,s).\]
\end{lemma}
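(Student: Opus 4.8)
The plan is to exploit the explicit representation of $v$ via the parabolic measure $\omega^\EPS$ of $L_1^*$ together with the Green's function, and then bound the resulting integrals by a maximal operator applied to the tent-space functional $T_p(h)$. First I would write, for a test point $(P,t)$ and a Whitney-type representation,
\[
v(P,t) = \int_\Omega \nabla_Y G_1^*(\underline{A}(\Delta_\EPS),\cdot,Y,s)\cdot h(Y,s)\,dYds
\]
in the spirit of \reflemma{lemma:defF} (with $h$ playing the role of $\EPS\nabla u_0$), or more precisely solve \eqref{vDefinedByDivh} and represent $v$ and $\delta\nabla v$ against $G_1^*$. Using that $\mathrm{supp}(h^\EPS)\subset Q(0,\tfrac{2}{\EPS})\cap\{\delta>\tfrac{\EPS}{2}\}$ from \reflemma{lemma:defh}, I would decompose $\Omega$ into the Whitney cubes $I^+$, $I\in\mathcal W(\cdot)$, apply interior estimates (\reflemma{lemma:Cacc}) to pass from $|\nabla v|$ and $|v|$ to $L^2$ averages on slightly enlarged cubes, and on each $I^+$ use Cauchy–Schwarz / Hölder in the form
\[
\int_{I^+}|\nabla_Y G_1^*||h|\,dYds \lesssim l(I)^{?}\Big(\fint_{\tilde I^+}|\nabla_Y G_1^*|^{(2p)'}\Big)^{1/(2p)'}\cdot l(I)\Big(\fint_{I^+}|h|^{2p}\Big)^{1/2p},
\]
so that the factor $l(I)\big(\fint_{I^+}|h|^{2p}\big)^{1/2p}$ assembles exactly into $T_p(h)$ after summing over $I$.

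The key analytic input is then controlling the Green's function side: I would use \reflemma{lemma:ComparisonPrinciple} (and \refprop{prop:LocalComparisonPrinciple}) to compare $G_1^*(\underline{A}(\Delta_\EPS),\cdot,Y,s)$ on a Whitney cube $I^+$ with $\omega^\EPS(\Delta_I)/\sigma(\Delta_I)$ where $\Delta_I$ is the boundary ball associated to $I$ — this is where $p>\tfrac n2+1$ enters, via the $L^p$ bound $\Vert G_1^*(\cdot)\Vert_{L^{p}(Q(\cdot,r))}\lesssim r^{-n-1+\frac{n+2}{p}}$ from \refprop{prop:GreensFctProperties}(4) (dualising with $2p$ forces the exponent condition). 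After this comparison, the sum over Whitney cubes over a fixed cone $\Gamma(P,t)$ becomes a sum of $\omega^\EPS(\Delta_I)$-weighted averages of $T_p(h)$ over boundary balls $\Delta_I$, i.e. precisely a parabolic Hardy–Littlewood maximal function $M_{\omega^\EPS}(T_p(h))(P,t)$; the telescoping/finite-overlap structure of $\mathcal W(P,t)$ makes the geometric sum in scales converge. For the nontangential bounds one takes the supremum over $(Y,s)\in\Gamma(P,t)$ of these cube sums, which only enlarges the relevant family of boundary balls by a bounded factor, still controlled by $M_{\omega^\EPS}(T_p(h))(P,t)$.

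For the far-away estimate, when $(P,t)\in\partial\Omega\setminus\Delta(0,\tfrac3\EPS)$, the cone $\Gamma(P,t)$ only meets $\mathrm{supp}(h)\subset Q(0,\tfrac2\EPS)$ at large scales, so every relevant Whitney cube $I^+$ sits at a definite distance from $(P,t)$ and has comparable ``view'' of the support; the pole $\underline A(\Delta_\EPS)$ and the support are both inside $\Delta(0,\tfrac2\EPS)$, so by the Harnack chain condition and \reflemma{lemma:HarnackInequality}/comparison, all the relevant quantities are comparable to their values at an arbitrary $(Y,s)\in\Delta(0,\tfrac2\EPS)$, giving the $\inf$ on the right-hand side. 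The main obstacle I anticipate is the bookkeeping in the comparison step: correctly matching the $L^{(2p)'}$ Green's function bound on Whitney cubes with the $L^{2p}$ average of $h$, getting the powers of $l(I)$ to close, and handling the mild non-doubling/geometry issues near $\partial\Omega$ for the chord arc base $\mathcal O$ — but since all of \reflemma{lemma:ComparisonPrinciple}, \refprop{prop:LocalComparisonPrinciple} and the Green's function estimates are already available on our domains, this is a (long but) routine Whitney-decomposition argument rather than a genuinely new difficulty.
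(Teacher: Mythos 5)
Your high-level plan — represent $v$ via $\int \nabla_Y G\cdot h$, Whitney-decompose, use Cacciopolli/Harnack to convert $\nabla G$ to $G$, then the comparison principle to convert $G$ to $\omega^\EPS$ so that the sum assembles into $M_{\omega^\EPS}(T_p(h))$ — matches the skeleton of the paper's proof, and you correctly identified where $p>\tfrac n2+1$ enters (the $L^{(2p)'}$ Green's function estimate, needed only for the \emph{local} piece near the singularity of $G(X,t,\cdot)$). But there are two genuine gaps.

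First, the convergence of the sum over scales is not ``a routine Whitney argument'': the boundary balls $\Delta_j=\Delta(X^*,t,2^{j-1}\delta(X,t))$ that appear in the comparison-principle step are \emph{nested}, not a finite-overlap cover, so summing $\omega^\EPS(\Delta_j)^{-1}\int_{\Delta_j}T_p(h)\,d\omega^\EPS$ over $j$ would, without further decay, produce an unbounded multiple of the maximal function. The paper closes this by proving a geometric decay $G(X,t,Y,s)\lesssim 2^{-j\alpha}\cdot\frac{1}{\omega^\EPS(\Delta_j)}G^\EPS(Y,s)$ on the $j$-th annulus $R_j$, obtained by combining Boundary H\"older continuity (\reflemma{lemma:BoundaryHölder}) with the pole-shifting chain (Harnack $\to$ local comparison $\to$ backwards Harnack $\to$ comparison), and the splitting $R_j=V_j\cup W_j$ is there precisely because the order in which H\"older and Harnack are applied must be switched depending on whether $\delta(Y,s)$ is small or large relative to $2^j\delta(X,t)$. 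Your proposal mentions none of this; ``the telescoping/finite-overlap structure ... makes the geometric sum converge'' is exactly the claim that requires the $2^{-j\alpha}$ factor to be true.

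Second, you treat $\tilde N(\delta|\nabla v|)$ as if it could be handled by the same representation (``represent $v$ and $\delta\nabla v$ against $G_1^*$''), but there is no usable integral formula for $\nabla v$ with a kernel you control. The paper instead proves a separate pointwise inequality
$\tilde N(\delta|\nabla v|)^2\lesssim N^{1,5/4}(v)^2+T_1(h)N^{1,5/4}(v)+T_1(h)^2$
by testing the equation $L_1^*v=\Div_X h$ against $v\eta^2$ on a Whitney box and absorbing gradient terms — a Cacciopolli-with-source argument, not a Green's-function estimate. Without this step, the lemma's statement about $\tilde N(\delta|\nabla v|)$ is not proved. (A smaller issue: in your display, the Green's function in the representation of $v(X,t)$ has pole at $(X,t)$, not at $\underline A(\Delta_\EPS)$; the pole $\underline A(\Delta_\EPS)$ belongs to $G^\EPS$ and $\omega^\EPS$, which only appear \emph{after} the comparison-principle step. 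This is probably just a slip of notation but it matters: the pole-shift from $(X,t)$ to $\underline A(\Delta_\EPS)$ is the technical heart of the argument.)
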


\begin{lemma}\label{Lemma2.11}
    For every \(\gamma>0\) there exist \(\eta>0\) such that
    \[\sigma(\{S(v)>2\lambda; N(v)+\tilde{N}(\delta|\nabla v|) + T_2(h))
    \leq \eta\lambda\}\leq \gamma\sigma(\{S(v)>\lambda\}).\]
\end{lemma}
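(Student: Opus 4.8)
The plan is to prove \refclaim{Lemma2.11} as a \emph{good-}\(\lambda\) \emph{inequality} relating the square function \(S[v]\) to the nontangential maximal functions and the tent-space functional \(T_2(h)\). Set \(E_\lambda:=\{S[v]>\lambda\}\), which is open, and perform a Whitney–Calderón–Zygmund decomposition of \(E_\lambda\) into balls \(\{\Delta_k\}\subset\partial\Omega\) with bounded overlap and \(\Diam(\Delta_k)\approx\Dist(\Delta_k,\partial\Omega\setminus E_\lambda)\). It suffices to show, for each fixed \(k\), that
\[
\sigma\bigl(\{S[v]>2\lambda;\ N[v]+\tilde N[\delta|\nabla v|]+T_2(h)\le \eta\lambda\}\cap\Delta_k\bigr)\le \gamma\,\sigma(\Delta_k),
\]
since summing over \(k\) and using finite overlap gives the claim with \(\gamma\) possibly enlarged by the overlap constant. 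Fix \(k\) and assume the ``good'' set \(G_k:=\{N[v]+\tilde N[\delta|\nabla v|]+T_2(h)\le\eta\lambda\}\cap\Delta_k\) is nonempty (otherwise there is nothing to prove), so there is a point \((P_k,t_k)\in G_k\).

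Next I would split the square function at the scale of \(\Delta_k\): write \(S[v]=S_{\mathrm{loc}}[v]+S_{\mathrm{far}}[v]\), where \(S_{\mathrm{far}}\) integrates over the part of the cone above scale \(\ell_k:=\Diam(\Delta_k)\) and \(S_{\mathrm{loc}}\) over the part below. The far part is controlled pointwise on \(\Delta_k\) by \(S[v]\) evaluated at a point of \(\partial\Omega\setminus E_\lambda\) (a consequence of the CZ geometry and the fact that far-away cone pieces overlap for nearby boundary points), hence \(S_{\mathrm{far}}[v]\le\lambda\) on \(\Delta_k\); so on the exceptional set within \(\Delta_k\) we must have \(S_{\mathrm{loc}}[v]>\lambda\). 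Then I estimate \(\sigma(\{S_{\mathrm{loc}}[v]>\lambda\}\cap\Delta_k)\) by Chebyshev after bounding \(\int_{\Delta_k}S_{\mathrm{loc}}[v]^2\,d\sigma\lesssim\int_{T(\hat\Delta_k)}\delta|\nabla v|^2\,dXdt\) (Fubini, as usual for square functions). The local Caccioppoli-type estimate, together with the equation \(L_1^*v=\Div_X(h)\), lets me replace \(\int_{T(\hat\Delta_k)}\delta|\nabla v|^2\) by a sum of (i) a boundary term \(\lesssim \ell_k^{n}\,(\sup_{T(\hat\Delta_k)\cap\{\delta\approx\ell_k\}} v)\cdot(\text{flux})\) controlled by \(N[v](P_k,t_k)\le\eta\lambda\) and \(\tilde N[\delta|\nabla v|](P_k,t_k)\le\eta\lambda\), and (ii) an inhomogeneous term \(\lesssim\int_{T(\hat\Delta_k)}|h|^2\delta^{-1}\cdot\delta^2\approx\) something comparable to \(\sigma(\Delta_k)\,T_2(h)(P_k,t_k)^2\le\sigma(\Delta_k)(\eta\lambda)^2\), using the definition of \(T_2(h)\) as a discretized tent-space norm over the Whitney cubes in \(\mathcal W(P_k,t_k)\). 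Combining, \(\int_{T(\hat\Delta_k)}\delta|\nabla v|^2\lesssim (\eta\lambda)^2\sigma(\Delta_k)\), whence by Chebyshev \(\sigma(\{S_{\mathrm{loc}}[v]>\lambda\}\cap\Delta_k)\lesssim \eta^2\sigma(\Delta_k)\). Choosing \(\eta\) small makes \(\gamma:=C\eta^2\) as small as desired, which is more than enough.

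The main obstacle I anticipate is the \emph{localization of the square function together with the integration-by-parts identity that introduces the inhomogeneity \(h\)}: one has to carefully choose a cutoff adapted to \(T(\hat\Delta_k)\), track the boundary terms produced when integrating \(\int \delta|\nabla v|^2\) against \(L_1^*v=\Div_X h\), and verify that the ``flux'' boundary term is genuinely controlled by the \emph{nontangential} quantities \(N[v]\) and \(\tilde N[\delta|\nabla v|]\) evaluated at the single good point \((P_k,t_k)\) — this uses \(v=0\) on \(\partial\Omega\), boundary Caccioppoli (\reflemma{lemma:BoundaryCacciopolli}), and boundary Hölder continuity (\reflemma{lemma:BoundaryHölder}) to pass from averages to pointwise cone bounds. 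A secondary technical point is matching the discretized functional \(T_2(h)\) to the continuous integral \(\int_{T(\hat\Delta_k)}|h|^2\delta\), which follows from the finite-overlap property of \(\mathcal W(P,t)\) and Hölder's inequality (since \(p\ge 1\) gives \(L^{2p}\) averages dominating \(L^2\) averages). Once these are in place the good-\(\lambda\) bound is routine, and the parameters \(\eta,\gamma\) come out exactly as in the classical Dahlberg–Jerison–Kenig and Nyström arguments for parabolic square functions.
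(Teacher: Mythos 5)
Your proposal identifies the right object (a good-\(\lambda\) inequality) and the right initial step (Whitney decomposition of \(\{S[v]>\lambda\}\) and truncation of the square function at the Whitney scale, which is Dahlberg's lemma), but then it diverges from what the paper does in a way that introduces a genuine gap: you run the Chebyshev argument against the \emph{surface measure} \(\sigma\) and try to bound \(\int_{\Delta_k} S_{\mathrm{loc}}[v]^2\, d\sigma\) by Caccioppoli plus the PDE. The paper instead proves the good-\(\lambda\) inequality \emph{first for the parabolic measure} \(\tilde\omega=\omega_1^*(\underline{A}(\Delta),\cdot)\), i.e.\ \(\tilde\omega(E)\le\gamma^2\tilde\omega(\Delta)\), and only afterwards converts this to \(\sigma(E)\le\epsilon(\gamma)\sigma(\Delta)\) via \(\tilde\omega\in A_\infty(\sigma)\) (which is available because \((DP)^{L_1^*}_{q_1'}\) is solvable, by the Dirichlet perturbation theory). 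This is not a cosmetic change. After Chebyshev and Fubini with respect to \(\tilde\omega\), the Comparison Principle replaces \(\tilde\omega(\Delta(Y,s,\delta(Y,s)))\) by the Green's function \(\tilde G(Y,s)\), so the quantity to estimate becomes \(\int_{\tilde\Omega}|\nabla v|^2\,\tilde G\). Two features of \(\tilde G\) are then essential in the integration by parts: (i) \(\tilde G\) solves the forward equation away from its pole, so the term \(\int A^*\nabla\tilde G\cdot\nabla(v^2\eta)+\tilde G\,\partial_t(v^2\eta)\) collapses, leaving only cutoff/lateral terms; and (ii) boundary H\"older continuity gives \(\tilde G\lesssim\delta^\alpha\), which makes the lateral terms \(\int_{\hat\Omega\setminus\tilde\Omega}\tilde G\,\delta^{-2}\) summable over scales via the annular decomposition \(\Ann_i\), eventually yielding \(\lesssim\int_{5\Delta} M[\tilde k\chi_{5\Delta}]\,d\sigma\lesssim\tilde\omega(\Delta)\) by the \(B_p\) property of the Poisson kernel.

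In your version there is no \(\tilde G\). After the Fubini the weight is simply \(\sigma(\Delta(Y,s,\delta(Y,s)))\approx\delta^{n+1}\), leaving \(\int_{\tilde\Omega}|\nabla v|^2\,dYds\) (not \(\int\delta|\nabla v|^2\) as you wrote), and the corresponding lateral cutoff terms are of the form \(\int_{\hat\Omega\setminus\tilde\Omega}\delta^{-2}\,dYds\). Without the \(\delta^\alpha\) decay provided by the Green's function there is no mechanism to sum these over scales; they generically diverge logarithmically as \(\delta\to 0\), so the Caccioppoli/IBP argument does not close. Likewise, your attempt to match the discretized tent functional \(T_2(h)\) to the continuous quantity \(\int_{T(\hat\Delta_k)}|h|^2\,\delta\) is not dimensionally consistent and is not how \(T_2(h)\) enters: in the paper the good-set hypothesis is used \emph{pointwise} inside the boundary integral (after \(\nabla_Y\tilde G\) is compared to \(\tilde G/\ell(I)\) by Caccioppoli), giving factors \(T_2(h)\,\tilde N(\delta|\nabla v|)\lesssim(\eta\lambda)^2\) which then produce the small constant. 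In short, the missing idea is the change of measure to \(\tilde\omega\) and the resulting Green's function weight \(\tilde G\), which is the Dahlberg--Jerison--Kenig / Fefferman--Kenig--Pipher device that makes the integration by parts and the boundary estimates work; the \(A_\infty\) property is then what transfers the conclusion back to \(\sigma\).
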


Both of these important key lemmas will be proved in the following sections. A corollary of \reflemma{Lemma2.10} and \reflemma{Lemma2.11} is the following.

\begin{cor}\label{cor:S(v)lesssimT_p(h)}
We have for \(r>1\)
\[\int_{\partial\Omega} S(v)^r d\sigma \leq C\int_{8\Delta_\rho} M_{\omega^\rho}(T_p(h))^r d\sigma. \]
\end{cor}

Although this corollary seems to be a direct consequence of the good \(\lambda\)-inequality, we need to be a bit careful with restricting the integration domain on the right side. For that, we need good enough decay estimates for the square function far away from the support of \(h\), which are harder to establish in the parabolic setting, since the decay in time direction is different from decay in spatial directions. However, this restriction of the integration domain is necessary, so that we can bound the right side of the previous corollary in some specific \(L^r\) norm. This is the next lemma.

\begin{lemma}\label{lemma:BoundednessOfS(v)}
    There exists \(1<q_1\leq q_0\) depending on \(\Vert\mu\Vert_\mathcal{C}\) such that
    \[\int_{8\Delta_\rho} M_{\omega^\rho}(T_p(h))^{q_1'} d\sigma\leq C<\infty.\]
\end{lemma}

\begin{proof}
    Since \((R)^{L_0}_{q_0}\) is solvable, we have by \refprop{prop:ParabolicRPimpliesDP} that \((D)^{L_0^*}_{q_0'}\) is solvable, i.e. \(\omega_{L_0^*}\in B_{q_0}(\sigma)\). Hence, \refprop{prop:ParabolicPerturbationTheoryForDP} gives us \(\omega_{L_1^*}\in B_{q_1}(d\sigma)\) or equivalently \((D)^{L_0^*}_{q_1'}\) for \(q_1\leq q_0\). Note here that \(q_1\) might be equal to \(q_0\) if \(\Vert \mu\Vert_{\mathcal{C}}\) is sufficiently small.

    Since \(\omega^\rho\) is the parabolic measure to the adjoint \(L_1^*\) and \(\omega_{L_1^*}\in B_{q_1}(d\sigma)\) we have by \eqref{eq:BoundMaxRevHölder}
    \[\Vert M_{\omega^\rho}(T_p(h))\Vert_{L^{q_1'}(\Delta_\rho)}\lesssim \Vert T_p(h)\Vert_{L^{q_1'}(\partial\Omega)}.\]
    The right side is bounded by the following \reflemma{lemma:BoundednessOfT(h)}.
\end{proof}

\begin{lemma}\label{lemma:BoundednessOfT(h)}
Set \(\tilde{T}(|g|)(P,t)=\int_{\Gamma_3(P,t)}|g|\delta^{-n-1}dYds\) for \((P,t)\in\partial\Omega\). There exists a function \(\gamma\) such that we have for every \((P,t)\in\partial\Omega\) and all \(p\geq 1, 1<q<\infty\)
\begin{itemize}
    \item \(T_p(|h|)(P,t)\lesssim_p\tilde{T}(\gamma)(P,t),\)
    \item \(\Vert \tilde{T}(\gamma)\Vert_{L^{q'}(\partial\Omega)}\leq C<\infty.\)
\end{itemize}
\end{lemma}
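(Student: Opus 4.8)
The plan is to unwind the definition of $h$ from \reflemma{lemma:defh} and exploit the three normalizations there ($\|\alpha\|_\infty = 1$, $\|\beta\|_{L^1}=1$, $\|g\|_{L^{q'}}=1$) together with a Fubini/tent-space duality argument. First I would bound $T_p(|h|)(P,t)$ pointwise. Fix a Whitney cube $I \in \mathcal{W}(P,t)$ with sidelength $l(I)\approx 2^j$; on $I^+$ one has $\delta \approx 2^j$, so by Hölder's inequality and the fact that $h$ is an average of kernels against the fixed probability data, $\big(\fint_{I^+} |h|^{2p}\big)^{1/2p}$ is controlled by a constant (depending on $p$ and the dimension) times a local average of the ``raw'' integrand. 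Writing out $h$, the inner integral is $\int_{\partial\Omega} g(P',t')\big(\int_{\Gamma^\EPS(P',t')} \beta(P',t',Y,s)\,\delta(Y,s)^{-n-2}\alpha(Y,s,Z,\tau)\chi_{B(Y,s,\delta(Y,s)/2)}(Z,\tau)\,dYds\big)dP'dt'$; since $\chi_{B(Y,s,\delta(Y,s)/2)}(Z,\tau)\ne 0$ forces $|(Y,s)-(Z,\tau)|_p\lesssim \delta(Z,\tau)$, i.e. $(Z,\tau)\in \Gamma_3(\text{something})$ and $\delta(Y,s)\approx\delta(Z,\tau)$, the $Y,s$-integration is over a Whitney-sized region and $\delta(Y,s)^{-n-2}$ is comparable to $l(I)^{-n-2}$. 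This lets me define $\gamma$ on $\Omega$ by a suitable average involving $g$ and $\beta$ integrated against the indicator geometry, and collapse the sum over $I\in\mathcal W(P,t)$ (noting $\sum_{I} l(I)\cdot l(I)^{-1} \cdot (\text{localized mass})$ telescopes because of the $\delta^{-n-1}$ weight) into the tent functional $\tilde T(\gamma)(P,t)=\int_{\Gamma_3(P,t)}|\gamma|\delta^{-n-1}\,dYds$. The key bookkeeping is matching the power: $l(I)$ from the definition of $T_p$, $\delta^{-n-2}$ from $h$, and $\delta^{-n-1}$ in $\tilde T$ — the discrepancy of one power of $\delta$ is exactly absorbed by the volume $\delta^{n+2}$ of $I^+$ in the average $\fint_{I^+}$ versus the pointwise kernel; I would track this carefully.

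Second, for the $L^{q'}$ bound on $\tilde T(\gamma)$, I would use Fubini to swap the order of integration over $\partial\Omega$ (the $(P,t)$ variable) and $\Omega$ (the $(Y,s)$ variable inside $\gamma$ and the cone $\Gamma_3$). The cone condition $(Y,s)\in\Gamma_3(P,t)$ is symmetric in the sense that $(P,t)$ then lies in a surface ball of radius $\approx\delta(Y,s)$ around $(Y^*,s)$, whose $\sigma$-measure is $\approx\delta(Y,s)^{n-1}$ (Ahlfors regularity). Carrying this out, $\int_{\partial\Omega}\tilde T(\gamma)^{q'}\,d\sigma$ is, after duality against a test function $\psi\in L^q(\partial\Omega)$ with $\|\psi\|_{L^q}=1$, bounded by an integral over $\Omega$ of $|\gamma|\,\delta^{-n-1}$ times $\big(\fint_{\Delta(Y^*,s,C\delta(Y,s))}\psi\big)$, and the latter is $\lesssim \inf_{\Delta}M_\sigma\psi$ on the relevant ball. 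The remaining mass of $\gamma$ — which is built from $g$ (with $\|g\|_{L^{q'}}=1$) and the probability densities $\beta$ — then gets integrated back against $M_\sigma\psi$, and two applications of the Hardy–Littlewood maximal inequality (in the surface variable) together with the normalizations close the estimate with a constant depending only on $n,\lambda,\mathcal O,q$. Essentially this is the statement that the tent-space map $g\mapsto h\mapsto \tilde T(\gamma)$ is bounded on $L^{q'}$, which is the parabolic analogue of the tent-space duality $T^\infty_2 \leftrightarrow T^1_2$.

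The main obstacle I anticipate is the geometric bookkeeping in the first step: correctly identifying the function $\gamma$ so that the three nested integrals defining $h$ (over $\partial\Omega$ in $(P',t')$, over the cone $\Gamma^\EPS$ in $(Y,s)$, and the average $\fint_{I^+}$ in $(Z,\tau)$ hidden inside $T_p$) reorganize into the single clean cone integral $\int_{\Gamma_3(P,t)}|\gamma|\delta^{-n-1}$, with all the powers of $\delta$ and all the finite-overlap/covering estimates for the Whitney cubes $\{I^+\}$ accounted for. Once $\gamma$ is pinned down, the second bullet is a fairly standard Fubini-plus-maximal-function computation. I would also need to invoke the finite overlap of $\mathcal W(P,t)$ and the containment $\bigcup_{I\in\mathcal W(P,t)}\hat I^+\subset\Gamma_3(P,t)$ recorded in the preliminaries to pass from the discrete sum to the continuous cone integral. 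Since the lemma is only used through these two bullet points (in the proof of \reflemma{lemma:BoundednessOfS(v)}), it suffices to produce \emph{some} admissible $\gamma$; I would not try to optimize it.
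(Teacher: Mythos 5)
The paper itself omits the proof, stating only that it generalizes Lemma~2.13 of \cite{kenig_neumann_1995} (the elliptic Neumann paper) to the parabolic case and referring to Lemma~4.2.12 of \cite{ulmer_l_2024}; your plan is precisely the standard argument one would extract from those references. Concretely: bound $|\alpha|\leq 1$, observe that $\chi_{B(Y,s,\delta(Y,s)/2)}(Z,\tau)\neq 0$ forces $(Y,s)$ into a Whitney-sized neighborhood of $(Z,\tau)$ with $\delta(Y,s)\approx\delta(Z,\tau)$, set $\gamma(Y,s):=\int_{\partial\Omega}|g(P',t')||\beta(P',t',Y,s)|\chi_{\Gamma^\EPS(P',t')}(Y,s)\,dP'dt'$, and match $l(I)\cdot l(I)^{-n-2}=l(I)^{-n-1}$ against the weight in $\tilde T$; then Fubini, the parabolic Ahlfors regularity $\sigma(\Delta(Y^*,s,r))\approx r^{n+1}$, the normalization $\Vert\beta(P',t',\cdot)\Vert_{L^1(\Gamma^\EPS(P',t'))}=1$, and the $L^q$-boundedness of $M_\sigma$ close the second bullet. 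Two minor points: the ``one power of $\delta$ absorbed by the volume $\delta^{n+2}$'' remark is a red herring --- since $|h|$ is essentially constant on $I^+$, the average $\fint_{I^+}$ does not introduce a separate volume factor and the bookkeeping is just $l(I)\cdot\delta^{-n-2}\approx\delta^{-n-1}$; and a single application of the Hardy--Littlewood maximal theorem (in the duality step $\int_{\partial\Omega}|g|\,M_\sigma\psi\,d\sigma\leq\Vert g\Vert_{L^{q'}}\Vert M_\sigma\psi\Vert_{L^q}$) suffices, not two.
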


The proof is a generalisation of Lemma 2.13 in \cite{kenig_neumann_1995} to the parabolic case but works completely analogously. Hence we omit the proof here, and refer the reader for full completeness to Lemma 4.2.12 in \cite{ulmer_l_2024}. Again through the lens of tent spaces, if we used a nondiscretized version of \(T_p\), this lemma would state that \(\delta h\) lies in every tent space \(T_q^p\). 

\smallskip
With all these statements we are finally able to proof the main theorem \refthm{MainTheorem}.

\begin{proof}[Proof of \refthm{MainTheorem}]
According to \refdef{def:DefintionOfRegularityProblem} and \refrem{rem:nontangnetialmeanvaluedL1L2} we need to show
\begin{align}\Vert \tilde{N}_1(\nabla u_1)\Vert_{L^{q_1}(\partial\Omega)}\lesssim \Vert f\Vert_{\dot{L}_{1,1/2}^{q_1}(\partial\Omega)},\label{TimeDerivativeEstimate}\end{align}
to conclude that \((R)^{L_1}_{q_1}\) is solvable. Let \(\rho>0\) and \(q_1\) be given by \reflemma{lemma:BoundednessOfS(v)}. Note that if \(\Vert\mu\Vert\) is sufficiently small, then \(q_1=q_0\), but in general we have only \(1<q_1\leq q_0\). 
First, we see with \reflemma{lemma:defv}, \refcor{cor:S(v)lesssimT_p(h)} and \reflemma{lemma:BoundednessOfS(v)} that
\begin{align}
\Vert \tilde{N}_{1,\rho}(\nabla u_1)\Vert_{L^{q_1}(\partial\Omega)}&\leq \Vert \tilde{N}_{1,\rho}(\nabla F)\Vert_{L^{q_1}(\partial\Omega)}+ \Vert \tilde{N}_{1,\rho}(\nabla u_0)\Vert_{L^{q_1}(\partial\Omega)}\nonumber
\\
&\lesssim \Vert \tilde{N}_{1,\rho}(\nabla u_0)\Vert_{L^{q_1}(\partial\Omega)} \Vert S(v)\Vert_{L^{q_1'}(\partial\Omega)} + \Vert f\Vert_{\dot{L}_{1,1/2}^{q_1}(\partial\Omega)}\nonumber
\\
&\lesssim \Vert \tilde{N}_{1,\rho}(\nabla u_0)\Vert_{L^{q_1}(\partial\Omega)} + \Vert f\Vert_{\dot{L}_{1,1/2}^{q_1}(\partial\Omega)}.\label{eq:Nontangentialnablau1BoundedByNontangnetialu0Andf}
\end{align}
If we are in the large perturbation case, i.e. \(\Vert\mu\Vert_{\mathcal{C}}\) is large, then we need \refprop{prop:ParabolicPR_pimpliesRP_q} to conclude that solvability of \((R)^{L_0}_{q_0}\) implies solvability of \((R)^{L_0}_{q_1}\), while in the small perturbation case, i.e. \(\Vert\mu\Vert_{\mathcal{C}}\) is sufficiently small, we already have \((R)^{L_0}_{q_1}\) without applying \refprop{prop:ParabolicPR_pimpliesRP_q}. Thus, we obtain
\begin{align*}
\Vert \tilde{N}_{1,\rho}(\nabla u_0)\Vert_{L^{q_1}(\partial\Omega)}\lesssim \Vert f\Vert_{\dot{L}_{1,1/2}^{q_1}(\partial\Omega)},
\end{align*}
and from \eqref{eq:Nontangentialnablau1BoundedByNontangnetialu0Andf}
\begin{align*}
\Vert \tilde{N}_{1,\rho}(\nabla u_1)\Vert_{L^{q_1}(\partial\Omega)}\lesssim \Vert f\Vert_{\dot{L}_{1,1/2}^{q_1}(\partial\Omega)}.
\end{align*}

Lastly, let \(\rho\) go to \(0\). Since the constants do not depend on \(\rho\) we obtain the same right side. Since \(\tilde{N}_{1,\rho}(\nabla u_1)\leq \tilde{N}_{1}(\nabla u_1)\) holds pointwise and above inequality also shows uniform boundedness of the quantity \(\Vert \tilde{N}_{1,\rho}(\nabla u_1)\Vert_{L^{q_1}(\Omega)}\), we get by Monotone Convergence Theorem that
\[\Vert \tilde{N}_{1,\rho}(\nabla u_1)\Vert_{L^{q_1}(\Omega)}\to \Vert \tilde{N}_{1}(\nabla u_1)\Vert_{L^{q_1}(\Omega)}.\]
This completes the proof.

\end{proof}

\subsection{Discussion of the role of the geometry of the boundary}\label{subsection:geometryOfBoundary}

The main result \refthm{MainTheorem} is formulated on cylinders \(\Omega=\mathcal{O}\times\mathbb{R}\) where \(\mathcal{O}\) is a Lipschitz domain (see \refdef{def:Lipschitzdomain}). However, we pointed out in Section \ref{section:Prelims} that some of the important ground work for us has been done on domains with rougher boundary. In particular, the article \cite{dindos_regularity_2023} proves that \(\Vert \tilde{N}(D_t^{1/2} u)\Vert_{L^p}\lesssim \Vert \tilde{N}(\nabla u)\Vert_{L^p} + \Vert f\Vert_{\dot{L}^p_{1,1/2}}\) for a solution to \eqref{eq:StandardPDE} with boundary data \(f\in \dot{L}^p_{1,1/2}(\partial\Omega)\) on cylinders with uniform base \(\mathcal{O}\) (see \refdef{def:unifromDomain}). In addition to that, Section \ref{subsection:PropertiesOfSolutions} points out that most of the important properties of solutions that are generally considered to be standard tools in this area have recently been extended to parabolic cylinder domains with base \(\mathcal{O}\) of this rough type (cf. \cite{ulmer_l_2024}).
\smallskip

We will now discuss why our main result \refthm{MainTheorem} cannot be formulated on such domains yet and what the limitations are. We are going to discuss the small and large perturbation case separately.
\smallskip

For the small perturbation result \((S)\) of \refthm{MainTheorem} we use solvability of \((R)_{q_0}^{L_0}\) to deduce that \((D)_{q_0'}^{L_0^*}\) is solvable (\refprop{prop:ParabolicRPimpliesDP}). This conclusion is proved in \cite{dindos_parabolic_2019}, but only under the assumption that the domain \(\Omega\) is a Lip(1,1/2) domain. This type of domain is time varying but each time slice is a Lipschitz domain which does not fit into the rougher setting with uniform base \(\mathcal{O}\). 
In addition, we also use perturbation theory of the parabolic Dirichlet boundary value problem (\refprop{prop:ParabolicPerturbationTheoryForDP}) which is only proved rigorously on parabolic cylinders with Lipschitz base \(\mathcal{O}\). However, since all the tools that were used in these proofs are now also available on rougher domains, the proofs are expected to carry through almost without modification. However, a rigorous proof has not appeared yet. The same observation is expected to be true for \refprop{prop:ParabolicRPimpliesDP}, the main result of \cite{dindos_parabolic_2019}, but this question is also still open. If these two results would be extended, then we would get conclusion \((S)\) for \(\Omega=\mathcal{O}\times\mathbb{R}\) and uniform \(\mathcal{O}\) via the proof that was presented in Section \ref{subsection:UnboundedStructure}.

\begin{rem}[Proof of \refcor{cor:AltMainThm}]
    We framed \refcor{cor:AltMainThm} as alternative version of the (S) conclusion of \refthm{MainTheorem}. Its proof follows exactly the proof of \refthm{MainTheorem}, but because we assume \(\omega_{L_1}^*\in B_{q_1}(\sigma)\), we do not need to use \refprop{prop:ParabolicRPimpliesDP} or \refprop{prop:ParabolicPerturbationTheoryForDP}, the only two results that are not known to hold on domains with uniform base \(\mathcal{O}\). Hence, \refcor{cor:AltMainThm} can be stated on these rougher domains and we don't need to provide a different proof.  
\end{rem}
\smallskip

For the large perturbation result \((L)\) of \refthm{MainTheorem} we also use \refprop{prop:ParabolicRPimpliesDP} or \refprop{prop:ParabolicPerturbationTheoryForDP} as in the small perturbation case above. Additionally, we also use \refprop{prop:ParabolicPR_pimpliesRP_q}, i.e. that
\[(R)_{L_0}^{q_0}\Rightarrow (R)_{L_0}^{p}\qquad\textrm{for all }1<p<q_0,\]
which was proved in \cite{dindos_satterqvist_24}, but only on cylinders with Lipschitz bases \(\mathcal{O}\). The limiting factor in extending this result to rougher domains lies in a gap in understanding the real interpolation spaces on rougher domains.
If these three results could be extended, the proof of conclusion \((L)\) of \refthm{MainTheorem} extends automatically to the case of domains with uniform base \(\mathcal{O}\). In this sense, we can say that the provided proof does not intrinsically need the Lipschitz base \(\mathcal{O}\). We should also note that the notion of a solution would change slightly from an energy solution to a reinforced weak solution that attains the boundary data in a nontangential almost everywhere sense (cf. \cite{dindos_regularity_2023}, see also Section \ref{subsection:EnergySol}). For perturbation questions this more general notion of solutions does not cause any problems, since one of the assumptions is always solvability, so the existence of such a solution for at least one operator.


\subsection{Bound on \(N(v)\) and \(\tilde{N}(\delta|\nabla v|)\) (Proof of \reflemma{Lemma2.10})}
We fix a point \((P,\tau)\in 3\Delta_\rho\) in the boundary and \((X,t)\in I^+\), where \(I^+\in \mathcal{W}(P,\tau)\), and set for \(j\geq 0\) increasing boundary cubes and corresponding Carelson regions as
\begin{align*}
    &\Delta_j\coloneqq \Delta(X^*,t, 2^{j-1}\delta(X,t)),\qquad\textrm{and}\qquad\Omega_j \coloneqq \Omega \cap Q(X^*,t, 2^{j-1}\delta(X,t)),
\end{align*}
with corkscrew points \((X_j,\underline{t_j}):=\underline{A}(\Delta_j)\) and \((X_j,\overline{t_j}):=\bar{A}(\Delta_j)\). Furthermore, let us intorduce annular regions as
\begin{align*}
R_j\coloneqq \Omega_j \setminus (\Omega_{j-1}\cup Q(X,t,\delta(X,t)/2)).
\end{align*}

We consider \(v\) in \((X,t)\) and split 
\begin{align*}
v(X,t)&=\int_\Omega G_1(X,t,Y,s)\Div_Y(h)(Y,s)dYds=\int_\Omega \nabla_Y G_1(X,t,Y,s)h(Y,s)dYds
\\
&=\int_{Q(X,t,\delta(X,t)/2)} \nabla_Y G_1(X,t,Y,s)h(Y,s)dYds 
\\
&\qquad+\int_{\Omega_0} \nabla_Y G_1(X,t,Y,s)h(Y,s)dYds
\\
&\qquad +\sum_{j\geq 1}\int_{R_j} \nabla_YG_1(X,t,Y,s)h(Y,s)dYds,
\\
&=:\tilde{v}(X,t)+v_0(X,t)+\sum_{j\geq 1}v_j(X,t).
\end{align*}
First, we deal with the local part \(\tilde{v}\). Since \(p>\frac{n}{2}+1\) we have as one of the properties of the Green's function (Thm 2.7 in \cite{cho_greens_2008}) for \(q=(2p')\in [1,\frac{n+2}{n+1})\) 
\begin{align*}
    \tilde{v}(X,t)&\leq \Vert \nabla_Y G_1(X,t,\cdot)\Vert_{L^q(Q(X,t,\delta(X,t)/2))}\Big(\int_{Q(X,t,\delta(X,t)/2)}|h|^{2p}dYds\Big)^{\frac{1}{2p}}
    \\
    &\lesssim \delta(X,t)^{-n-1+(n+2)/p}\Big(\int_{Q(X,t,\delta(X,t)/2)}|h|^{2p}dYds\Big)^{\frac{1}{2p}}
    \\
    &= \delta(X,t)\Big(\fint_{Q(X,t,\delta(X,t)/2)}|h|^{2p}dYds\Big)^{\frac{1}{2p}}\leq T_p(h)(P,\tau).
\end{align*}

Next, let us consider \(v_0\). Let \(G^\rho(Y,s):=G_1(\underline{A}(8\Delta_\rho),Y,s)\) be the Green's function of \(L_1\) with pole in \(\underline{A}(8\Delta_\rho)\). We have by Cacciopolli and Harnack inequality
\begin{align*}
    |v_0(X,t)|&\lesssim \sum_{I\in \mathcal{W}(\Delta_0)}\Big(\int_{I^+}|\nabla_Y G_1(X,t,Y,s)|^2dYds\Big)^{1/2}\Big(\int_{I^+}|h(Y,s)|^2dYds\Big)^{1/2}
    \\
    &
    \lesssim \sum_{I\in \mathcal{W}(\Delta_0)}\Big(\frac{1}{l(I)^2}\int_{\tilde{I}^+}|G_1(X,t,Y,s)|^2dYds\Big)^{1/2}\Big(\int_{\tilde{I}^+}|h(Y,s)|^2dYds\Big)^{1/2}
    \\
    &
    \lesssim \sum_{I\subset \mathcal{W}(\Delta_0)}\Big(\frac{1}{l(I)^2}\int_{\tilde{I}^+}|G_1(X_2,\underline{t_2},Y,s)|^2dYds\Big)^{1/2}\Big(\int_{I^+}|h(Y,s)|^2dYds\Big)^{1/2}
    \\
    &
    \lesssim \sum_{I\in \mathcal{W}(\Delta_0)}\Big(\frac{1}{l(I)^2}\int_{\tilde{I}^+}\big|\frac{G_1(X_2,\underline{t_2},Y,s)}{G^\rho(Y,s)}G^\rho(Y,s)\big|^2dYds\Big)^{1/2}\Big(\int_{I^+}|h(Y,s)|^2dYds\Big)^{1/2}.
\end{align*}

Using the Local Comparison Principle, Backwards Harnack inequality and Comparison Principle we obtain
\begin{align*}
    &\int_{\tilde{I}^+}\big|\frac{G(X_2,\underline{t_2},Y,s)}{G^\rho(Y,s)}G^\rho(Y,s)\big|^2dYds\lesssim\int_{\tilde{I}^+}\big|\frac{G(X_2,\underline{t_2},X_0,\overline{t_0})}{G^\rho(X_0,\underline{t_0})}G^\rho(Y,s)\big|^2dYds
    \\
    &\lesssim \int_{\tilde{I}^+}\big|\frac{G(X_2,\underline{t_2},X_0,\underline{t_0})}{G^\rho(X_0,\underline{t_0})}G^\rho(Y,s)\big|^2dYds
    \lesssim \int_{\tilde{I}^+}\big|\frac{(\omega^*)^{(X_2,\underline{t_2})}(\Delta_0)}{\omega^\rho(\Delta_0)}G^\rho(Y,s)\big|^2dYds
    \\
    &\lesssim \frac{1}{\omega^\rho(\Delta_0)^2}\int_{\tilde{I}^+}\frac{\omega^\rho(I)^2}{l(I)^{2n}}dYds,
\end{align*}

and hence in total
\begin{align*}
    |v_0(X,t)|& \lesssim \sum_{I\in \mathcal{W}(\Delta_0)}\frac{1}{\omega^\rho(\Delta_0)}\Big(\frac{1}{l(I)^2}\int_{\tilde{I}^+}\frac{\omega^\rho(I)^2}{l(I)^{2n}}dYds\Big)^{1/2}\Big(\int_{I^+}|h(Y,s)|^2dYds\Big)^{1/2}
    \\
    & \lesssim \frac{1}{\omega^\rho(\Delta_0)}\sum_{I\in \mathcal{W}(\Delta_0)}l(I)\omega^\rho(I)\Big(\fint_{I^+}|h(Y,s)|^2dYds\Big)^{1/2}
    \\
    & \lesssim \frac{1}{\omega^\rho(\Delta_0)}\int_{\Delta_0}\sum_{I\in \mathcal{W}(X,t)}l(I)\Big(\fint_{I^+}|h(Y,s)|^2dYds\Big)^{1/2}d\omega^\rho(X,t)
    \\
    & \lesssim \frac{1}{\omega^\rho(\Delta_0)}\int_{\Delta_0}T_1(h)d\omega^\rho(X,t)
    \\
    &\lesssim M_{\omega^\rho}(T_p(h))(P,\tau).
\end{align*}

Next for \(v_j\), we intend to follow the approach of \(v_0\) with additional decay expected from the fact that these regions are further away from the pole of the Green's function. But first, we split the annular region \(R_j=V_j\cup W_j\) into a region \(V_j\) close to the boundary and a region \(W_j\) away from the boundary. Specifically,
\begin{align*}
    V_j&:=R_j\cap\{(Y,s);\delta(Y,s)<2^{j-3}\delta(X,t)\},\qquad\textrm{and}\qquad
    \\
    W_j&:=R_j\cap\{(Y,s);\delta(Y,s)\geq 2^{j-3}\delta(X,t)\}.
\end{align*}
The calculations for both sets are almost analogous to the case of \(v_0\). However, the switch of pole has to be undertaken more carefully. Note that for \((Y,s)\in I^+\subset V_j\) with Boundary Hölder inequality and Harnack inequality we have
\begin{align}
    G(X,t,Y,s)&\lesssim 2^{-j\alpha}G(X_j,\underline{t_j},Y,s)\lesssim 2^{-j\alpha}G(X_{j+2},\underline{t_{j+2}},Y,s)\label{eq:V_j}
    \\
    &=2^{-j\alpha}\frac{G(X_{j+2},\underline{t_{j+2}},Y,s)}{G^\rho(Y,s)}G^\rho(Y,s).\nonumber
\end{align}
Applying local Comparison Principle, Backwards Harnack inequality, and the Comparison Principle we further obtain
\begin{align*}
    &2^{-j\alpha}\frac{G(X_{j+2},\underline{t_{j+2}},Y,s)}{G^\rho(Y,s)}G^\rho(Y,s)\lesssim 2^{-j\alpha}\frac{G(X_{j+2},\underline{t_{j+2}},X_j,\overline{t_j})}{G^\rho(X_j,\underline{t_j})}G^\rho(Y,s)
    \\
    &\qquad\lesssim 2^{-j\alpha}\frac{G(X_{j+2},\underline{t_{j+2}},X_j,\underline{t_j})}{G^\rho(X_j,\underline{t_j})}G^\rho(Y,s)
    \lesssim 2^{-j\alpha}\frac{(\omega^*)^{(X_{j+2},\underline{t_{j+2}})}(\Delta_j)}{\omega^\rho(\Delta_j)}G^\rho(Y,s) 
    \\
    &\qquad\lesssim 2^{-j\alpha}\frac{1}{\omega^\rho(\Delta_j)}G^\rho(Y,s),
\end{align*}
and hence
\begin{align*}
    &\sum_{I\in \mathcal{W}(\Delta_j), I^+\cap V_j\neq \emptyset}\Big(\int_{I^+}|\nabla_Y G(X,t,Y,s)|^2dYds\Big)^{1/2}\Big(\int_{I^+}|h(Y,s)|^2dYds\Big)^{1/2}
    \\
    &\qquad\lesssim 2^{-j\alpha}\frac{1}{\omega^\rho(\Delta_j)}\int_{\Delta_j}T_1(h)d\omega^\rho(x,t)\lesssim 2^{-j\alpha} M_{\omega^\rho}(T_p(h))(P,\tau).
\end{align*}

For \(W_j\), we can use Harnack's inequality before using Boundary Hölder inequality and obtain for \((Y,s)\in I^+\subset W_j\)
\begin{align*}
    G(X,t,Y,s)&\lesssim G(X,\underline{t_{j+1}},Y,s)\lesssim 2^{-j\alpha}G(X_{j+2},\underline{t_{j+2}},Y,s).
\end{align*}
From here on the proof for \(W_j\) follows verbatim the bound for \(V_j\) from \eqref{eq:V_j} and together we obtain \[v_j(X,t)\lesssim 2^{-j\alpha}M_{\omega^\rho}(T_p(h))(P,\tau).\]

This yields in total
\begin{align*}
    v(X,t)&=\tilde{v}(X,t) + \sum_{j\geq 0} v_j(X,t)\leq M_{\omega^\rho}(T_p(h))(P,\tau) + \sum_{j\geq 0}2^{-j\alpha}M_{\omega^\rho}(T_p(h))(P,\tau)
    \\
    &\lesssim M_{\omega^\rho}(T_p(h))(P,\tau).
\end{align*}

Next let us examine \((P,t)\in \partial\Omega\setminus 3\Delta_\rho\) and \((X,t)\in \Gamma(P,t)\) using the same notation as before. Since the cones lie now away from the support of \(h\) we expect some additional decay. We have \(v_0=\tilde{v}=0\), since \(h\) is not supported in their respective integration domains. But for \(v_j\) we can observe like before that 
\begin{align*}
    v_j(X,t)&\lesssim \sum_{I\in \mathcal{W}(\Delta_j), I^+\cap V_j\neq \emptyset}\Big(\int_{I^+}|\nabla_Y G(X,t,Y,s)|^2dYds\Big)^{1/2}\Big(\int_{I^+}|h(Y,s)|^2dYds\Big)^{1/2}
    \\
    &\qquad\lesssim 2^{-j\alpha}\frac{1}{\omega^\rho(\Delta_j)}\int_{\Delta_j}T_1(h)d\omega^\rho.
\end{align*}
Now this term either vanishes because the support of \(h\) is too far away, so that \(T_1(h)=0\) on \(\Delta_j\), or if \(\Delta_{j+1}\cap 2\Delta_\rho\neq \emptyset\) we can obtain
\begin{align*}
    \frac{1}{\omega^\rho(\Delta_j)}\int_{\Delta_j}T_1(h)d\omega^\rho&
    \lesssim \fint_{\Delta_{j+2}}T_1(h)d\omega^\rho
    \\
    &\lesssim \inf_{(Y,s)\in2\Delta_\rho}M_{\omega^\rho}(T_p(h))(Y,s).
\end{align*}
This completes the pointwise bound for \(N(v)\).

\medskip

To conclude the same bound for \(\tilde{N}(\delta|\nabla v|)\) we proof the pointwise inequality
\begin{align}
    \tilde{N}(\delta|\nabla v|)^2\lesssim N^{1,5/4}(v)^2+ T_1(h)N^{1,5/4}(v) + T_1(h)^2,\label{eq:DeltaN(nablaV)}
\end{align}

which  implies the lemma by above pointwise bound on \(N(v)\). Note here that the above proved bound can also be done for \(N^{1,{5/4}}\) if we just replace every \(I\) by an enlargement \(\tilde{I}\).

To proof \eqref{eq:DeltaN(nablaV)} we use the notation from above and introduce a smooth cut-off function \(\eta\in C^\infty_0(Q(X,t, 5 \delta(X,t)/8))\) with \(|\nabla\eta|\lesssim \frac{1}{\delta(X,t)},|\partial_t\eta|\lesssim \frac{1}{\delta(X,t)^2}\) and \(\eta\equiv 1\) on \(Q(X,t, \delta(X,t)/2)\). We observe that
\begin{align*}
    \fint_{Q(X,t, \delta(X,t)/2)}|\delta\nabla v|^2dYds&\approx \delta(X,t)^2\fint_{Q(X,t, \delta(X,t)/2)}|\nabla v|^2dYds,
\end{align*}
and
\begin{align*}
    &\fint_{Q(X,t, 5\delta(X,t)/8)}|\nabla v|^2\eta^2dYds\lesssim \fint_{Q(X,t, 5\delta(X,t)/8)} A_1^*\nabla v\cdot\nabla v \eta^2 dYds
    \\
    & = \fint_{Q(X,t, 5\delta(X,t)/8)} A_1^*\nabla v^2\eta\cdot\nabla \eta dYds
     + \fint_{Q(X,t, 5\delta(X,t)/8)} A_1^*\nabla v\cdot\nabla (v \eta^2) dYds
    \\
    &=:I_1+I_2.
\end{align*}
By H\"{o}lder's and Young's inequality we have
\begin{align*}
    \delta(X,t)^2I_1&\lesssim \Big(\fint_{Q(X,t, 5 \delta(X,t)/8)}|\delta\nabla v|^2\eta^2 dYds\Big)^{1/2}N^{1,5/4}(v)(P,\tau)
    \\
    &\lesssim \sigma\fint_{Q(X,t, 5 \delta(X,t)/8)}|\delta\nabla v|^2\eta^2 dYds + C_\sigma N^{1,5/4}(v)^2(P,\tau)
\end{align*}
for small \(\sigma>0\), which allows us to hide the first term on the left hand side.

For \(I_2\) we use the PDE \(L_1^*v=\Div_X(h)\) to obtain 
\begin{align*}
    I_2&=-\fint_{Q(X,t, 5\delta(X,t)/8)} v\partial_s(v\eta^2)dYds
     + \fint_{Q(X,t, 5\delta(X,t)/8)} \Div_Y( h) v\eta^2 dYds
     \\
     &=:II_1+II_2.
\end{align*}
For the second term \(\delta(X,t)^2II_2\) we use integration by parts to move the divergence over to \(v\) or \(\eta\) and obtain the bound
\begin{align*}
    &T_1(h)(P,\tau)\Big(\fint_{Q(X,t, 5 \delta(X,t)/8)}|\delta\nabla v|^2\eta^2 dYds\Big)^{1/2} + T_1(h)(P,\tau)N^{1,5/4}(v)(P,\tau)
    \\
    &\lesssim \sigma \Big(\fint_{Q(X,t, 5 \delta(X,t)/8)}|\delta\nabla v|^2\eta^2 dYds\Big)^{1/2} + [C_\sigma T_1(h)^2 + T_1(h)N^{1,5/4}(v)](P,\tau),
\end{align*}
where we again can hide the first term on the left hand side.

Finally, we have also by integration by parts
\begin{align*}
    II_1&=-\fint_{Q(X,t, \delta(X,t)/2)} \partial_s(v^2\eta^2)dYds +\frac{1}{2} \fint_{Q(X,t,5 \delta(X,t)/8)}\partial_s(v^2)\eta^2 dYds
    \\
    &=-\frac{1}{2} \fint_{Q(X,t, 5\delta(X,t)/8)}v^2\partial_s(\eta^2) dYds
    \lesssim \frac{1}{\delta(X,t)^2}N^{1,5/4}(v)^2(P,\tau).
\end{align*}
This finishes the proof of \eqref{eq:DeltaN(nablaV)}.
\qed

\subsection{"Good \(\lambda\)-inequality" (Proof of \reflemma{Lemma2.11})}
Let \(\Delta\) be a Whitney cube of \(\{S(v)>\lambda\}\), i.e. \(l(\Delta)\approx\mathrm{dist}(\Delta,\{S(v)\leq\lambda\})\), and define 
\begin{align}E :=\{(X,t)\in \Delta; S(v)>2\lambda, N(v)+\tilde{N}(\delta|\nabla v|) + T_2(h)\leq \eta\lambda\}.\label{eq:defSetEGoodlambda}\end{align}
We can cover \(\{S(v)>\lambda\}\) by Whitney cubes whose intersection is finite, and hence it is enough to prove \(\sigma(E)\lesssim\gamma\sigma(\Delta)\).

\smallskip
From Lemma 1 in \cite{dahlberg_area_1984} we have that for every \(\tau>0\) there exists a \(\gamma>0\) 
    such that for the truncated square function
\[S_{\tau l(\Delta)}(v)^2(X,t)\coloneq \int_{\Gamma_{^{\tau l(\Delta)}(X,t)}}|\nabla v(Y,s)|^2\delta(Y,s)^{2-n}dYds>\frac{\lambda^2}{4}\]
    holds for all points \((X,t)\in E\), where \(\Gamma^{\tau r}(X,t)\coloneq \Gamma(X,t)\cap Q(X,t,\tau r)\).
We set the sawtooth regions \(\tilde{\Omega}=\bigcup_{(X,t)\in E}\Gamma_\alpha^{\tau l(\Delta)}(X,t)\) and \(\hat{\Omega}=\bigcup_{(X,t)\in E}\Gamma_{2\alpha}^{2\tau l(\Delta)}(X,t)\) 
    and choose  a smooth cut-off \(\psi\) with on \(\tilde{\Omega}\). Specifically, we choose \(\psi\) such that \(\eta\in C_0^\infty(\hat{\Omega}), 0\leq \psi\leq 1\) with \(\psi\equiv 1\) on \(\tilde{\Omega}\) and \(\psi\equiv 0\) outside of  \(\hat{\Omega}\) and \(|\nabla \psi |\delta+| \partial_s \psi|\delta^2\lesssim C\). Furthermore, we split the region between the sawtooth region \(\tilde{\Omega}\) and its enlargement \(\hat{\Omega}\) into 
\begin{align*}
    &E_1:=\hat{\Omega}\setminus\tilde{\Omega}\cap \{(Y,s)\in \Omega;\delta(Y,s)<\tau l(\Delta)\},\qquad\textrm{and}
    \\
    &E_2:=\hat{\Omega}\setminus\tilde{\Omega}\cap \{(Y,s)\in \Omega;\delta(Y,s)\geq\tau l(\Delta)\}.
\end{align*}

Since \(\Delta\) was a Whitney cube of arbitrary size, we have to consider the backwards in time corkscrew point \(\underline{A}(\Delta)\) with respect to this \(\Delta\). We denote the corresponding parabolic measure of \(L_1^*\) to this point by \(\tilde{\omega}=\omega_{L_1^*}^{\underline{A}(\Delta)}\) and the Green's function for \(L_1\) with pole in this point as \(\tilde{G}=G_1(\underline{A}(\Delta),\cdot)\). We want to show that 
\begin{align}\tilde{\omega}(E)\lesssim \eta^2\tilde{\omega}(\Delta).\label{GoodLambdaInequalityomega^*}\end{align} 
If \eqref{GoodLambdaInequalityomega^*} holds, then \(\sigma(E)\lesssim \gamma\sigma(\Delta)\) follows from \(\omega_{L_1^*}\in A_\infty(\sigma)\) (see \eqref{eq:A_inftyProperty}) for a sufficiently small choice of \(\eta\). Hence choosing \(\eta\) small enough finishes the proof.

\smallskip

Now let us proof \eqref{GoodLambdaInequalityomega^*}. We start with the truncated square function, and the Comparison Principle to obtain
\begin{align*}
    \tilde{\omega}(E)&\lesssim \frac{1}{\lambda^2}\int_E S_{\tau l(\Delta)}(v)^2d\tilde{\omega}
    =\frac{1}{\lambda^2}\int_E\int_{\tilde{\Omega}}|\nabla v|^2\delta^{-n-1}\chi_{\Gamma^{\tau l(\Delta)}(X,t)}dYdsd\tilde{\omega}(X,t)
    \\
    &=\frac{1}{\lambda^2}\int_{\tilde{\Omega}}|\nabla v|^2\delta^{-n-1}\Big(\int_E\chi_{\Gamma^{\tau l(\Delta)}(X,t)}d\tilde{\omega}(X,t)\Big)dYds
    \\
    &\lesssim \frac{1}{\lambda^2}\int_{\tilde{\Omega}}|\nabla v|^2\delta^{-n-1}\tilde{\omega}(\Delta(X,t, \delta(X,t)))dYds
    \lesssim \frac{1}{\lambda^2}\int_{\tilde{\Omega}}|\nabla v|^2\tilde{G}dYds.
\end{align*}

By ellipticity, we can bound 
\[\frac{1}{\lambda^2}\int_{\tilde{\Omega}}|\nabla v|^2\tilde{G}dYds\lesssim\frac{1}{\lambda^2}\int_{\hat{\Omega}} A_1^*\nabla v\cdot\nabla v \tilde{G}\psi dYds.\]
The product rule allows us the write \(\nabla v \tilde{G}\psi=-\nabla \tilde{G} v\psi - \nabla \psi \tilde{G}v + \nabla(v\tilde{G}\psi)\) and we can use the PDE for \(v\) on the term with \(\nabla(v\tilde{G}\psi)\) to obtain
\begin{align*}
    &-\frac{1}{2\lambda^2}\int_{\hat{\Omega}} A_1^*\nabla v^2 \cdot\nabla \tilde{G}\psi dYds 
     - \frac{1}{2\lambda^2}\int_{\hat{\Omega}} A_1^*\nabla v^2  \cdot \nabla\psi \tilde{G} dYds
     + \frac{1}{2\lambda^2}\int_{\hat{\Omega}} \partial_t v^2 \tilde{G}\psi dYds
    \\
    &\qquad + \frac{1}{\lambda^2}\int_{\hat{\Omega}} \Div_Y(h) v \tilde{G}\psi dYds =:I_1+I_2+I_3+I_4.
\end{align*}
We group \(I_1\) and \(I_3\) together and use the product rule again to rewrite these expressions so that the PDE for the Green's function can be used on a function that is supported away from its pole. Specifically, we get
\begin{align*}
    I_1+I_3&=-\frac{1}{2\lambda^2}\int_{\hat{\Omega}} A_1\nabla \tilde{G}\cdot \nabla (v^2\psi) dYds + \frac{1}{2\lambda^2}\int_{\hat{\Omega}} A_1\nabla \tilde{G}\cdot \nabla \psi v^2 dYds 
    \\
    & \qquad + \frac{1}{2\lambda^2}\int_{\hat{\Omega}} \tilde{G}\partial_t(v^2\psi) dYds -\frac{1}{2\lambda^2}\int_{\hat{\Omega}} \tilde{G}v^2\partial_t\psi dYds=I'_1+I'_2+I'_3+I'_4.
\end{align*}
The terms \(I'_1+I'_3=0\), since we can use the PDE for the Green's function. The other two terms and \(I_2\) will work very similar and we can note that by using the properties of the cut-off function we obtain
\begin{align*}
I'_2+I'_4+I_2&\lesssim \frac{1}{\lambda^2}\int_{E_1\cup E_2}(|\delta \nabla v| v + v^2)|\tilde{G}| \frac{1}{\delta^2} dYds + \frac{1}{\lambda^2}\int_{E_1\cup E_2}v^2|\nabla\tilde{G}|\frac{1}{\delta} dYds
\\
&\lesssim \eta^2\int_{E_1\cup E_2}|\tilde{G}| \frac{1}{\delta^2} dYds + \eta^2\int_{E_1\cup E_2}|\nabla\tilde{G}|\frac{1}{\delta} dYds=:J_1+J_2.
\end{align*}
In the last line we used that all the expression containing \(v\) or \(\delta\nabla v\) can be bounded by the nontangential maximal function of the respective term, and they are bounded by \(\eta\lambda\) due to the choice of \eqref{eq:defSetEGoodlambda}. To be rigorous here, we would like to point out that the term containing \(\delta|\nabla v|\) can only be bounded by the mean valued nontangential maximal function and we would have a supremum over a Whitney cube on the Green's function. This supremum could be removed by Harnack's inequality under enlarging the sets \(E_1\) and \(E_2\). However, this enlargement does not influence the following argument for \(J_1\) and \(J_2\).
\medskip

Before we are able to calculate \(J_1\) and \(J_2\), we need to make the following observation rigorous: We want to split the sets \(E_1\) and \(E_2\) into Whitney cubes \(I^+\). Each Whitney cube of length \(2^i\) corresponds to at least one boundary cube, which will be called \(\Delta_i\), in the sense that \(l(I^+)\approx 2^i\approx l(\Delta_i)\approx \mathrm{dist}(\Delta_i,I^+)\). Then \(\frac{\tilde{G}}{\delta}(X,t)\approx M[\tilde{k}](Y,s)\) for a point \((X,t)\in I^+\) and \((Y,s)\in \Delta_i\), where \(\tilde{k}=\frac{d\tilde{\omega}}{d\sigma}\). Hence, we can replace integrating \(\frac{\tilde{G}}{\delta}\) over a Whitney cube of \(E_1\) or \(E_2\) by integrating \(M[\tilde{k}]\) over the boundary. Now, let us make this argument rigorous.  

First, we need to introduce new quasi-annuli. We define
\[\Ann_i:=\{(Y,s)\in\partial\Omega; c2^{i-1}\leq\mathrm{dist}_p((Y,s),E)\leq c2^{i}\}\qquad\textrm{ for all }{i\leq N},\]
where \(N\in \mathbb{Z}\) is chosen such that \(\tau l(\Delta)\leq c2^N\leq 2\tau l(\Delta)\).
Next, we cover \(\Ann_i\) by \(N_i\) boundary cubes \((\Delta_i^m)_{1\leq m\leq N_i}\), which have diameter comparable to \(c2^{i}\).
Since \(\tau<1\), we have that \(E\cup\bigcup_{i=-\infty}^N \Ann_i\subset 3\Delta\). Next, we observe that if \(I\in \mathcal{W}(3\Delta)\) with \(l(I)=c2^i\) and \((Y,s)\in I^+\cap E_1\neq \emptyset\) then there exists a vertex of a cone \((P,\tau)\in E\) with
\[\alpha c 2^{i-1}\leq |(Y,s)-(P,\tau)|_p\leq 2\alpha c2^i.\]
Due to the definition of \(\Ann_i\), we know that there exists a \(\Delta_i^m\) such that 
\\ \(\mathrm{dist}((P,\tau),\Delta^m_i)\approx 2^i\). Hence we get combined that
\begin{align}c2^{i-2}\leq\mathrm{dist}(\Delta_i^m,I^+)\approx c2^{i+2}.\label{eq:DistanceDelta^mtoI^+}\end{align}

\smallskip

Hence we showed that for every \(I\in W(5\Delta)\) with \(l(I)=c2^i\) and \(I^+\cap E_1\neq \emptyset\) we have an \(m=m(I^+)\leq N_i\) with \(\mathrm{dist}(\Delta_i^m,I^+)\approx c2^i\). 
Reversely, for every such \(\Delta_i^m\) there is at most a finite number \(\tilde{N}\) of cubes \(I^+\) of above form that satisfy (\ref{eq:DistanceDelta^mtoI^+}), where \(\tilde{N}\) does just depend on the dimension of the ambient space and the geometrical constants of the uniform domain. To see this, we note that the volume of \(\{(Y,s)\in \mathbb{R}^{n+1};\mathrm{dist}(\Delta_i^m,(Y,s))\lesssim 2^i\}\) is comparable to \(2^{i(n+2)}\), and hence can only be covered by at most \(\tilde{N}\) disjoint \(I^+\) that have volume comparable to \(2^{i(n+2)}\).

\smallskip

In total we have found that for every \(I^+\), where \(I\in W(3\Delta)\) with \(l(I)=c2^i\) and \(I^+\cap E_1\neq \emptyset\), there exists a \(\Delta_i^m\) so that \eqref{eq:DistanceDelta^mtoI^+} holds, but there cannot be more than \(\tilde{N}\) of such \(I^+\) for the same \(\Delta_i^m\).

\smallskip

This observation allows the following calcuation for the Green's function evaluated at \((X,t)\in I^+\), where \(l(I)=c2^i\) and \(I^+\cap E_1\neq \emptyset\) and \(\Delta_i^m\) the corresponding cube with \eqref{eq:DistanceDelta^mtoI^+}. We have 
\begin{align*}
    &\mathrm{dist}((X^*,t+4\delta(X,t)^2), \Delta_i^m)
    \\&\qquad\lesssim |(X^*,t+4\delta(X,t)^2)-(X,t)|_p + |(X,t)-(Q,\tau)|_p +\mathrm{dist}((Q,\tau), \Delta_i^m)
    \\
    &\qquad\lesssim 2^i,
\end{align*}
and hence with the doubling property of the parabolic measure and the Comparison Principle for all \((Y,s)\in \Delta_i^m\)
\begin{align}
    \frac{\tilde{G}(X,t)}{\delta(X,t)}&\approx \delta(X,t)^{-n-1}\tilde{\omega}(\Delta(X^*,t+4\delta(X,t),\delta(X,t)))\nonumber
    \\
    &\approx \delta(X,t)^{-n-1}\tilde{\omega}(\Delta(Y,s,5\delta(X,t)))\nonumber
    \\
    &=\fint_{\Delta(Y,s,5\delta(X,t))} \tilde{k}(z,\tau)dzd\tau\nonumber
    \\
    &\leq M[\tilde{k}\chi_{5\Delta}](Y,s).\label{eq:G^*overdeltalesssimM[k]}
\end{align}

We can return to the expressions \(J_1\) and \(J_2\) now. As a consequence of \eqref{eq:G^*overdeltalesssimM[k]} we obtain for the part of \(J_1\) that is integrated over \(E_1\)
\begin{align*}
    &\int_{E_1}\frac{|\tilde{G}|}{\delta^2}dYds
    \lesssim \sum_{I\in W(5\Delta)}\int_{E_1\cap I^+}\frac{|\tilde{G}|}{\delta^2}dYds
    \\
    &\lesssim \sum_{I\in W(5\Delta)}\int_{E_1\cap I^+}\frac{\inf_{(\tilde{Y},\tilde{s})\in \Delta_i^{m(I^+)}}M[\tilde{k}\chi_{5\Delta}](\tilde{Y},\tilde{s})}{\delta}dYds
    \\
    &\lesssim \sum_{I\in W(5\Delta)}l(I)^{n+1}\inf_{(Y,s)\in \Delta_i^{m(I^+)}}M[\tilde{k}\chi_{5\Delta}](Y,s)
    \\
    &\lesssim \sum_{i\leq N}\sum_{I_i\in W(5\Delta)}(2^i)^{n+1}\inf_{(Y,s)\in \Delta_i^{m(I_i^+)}}M[\tilde{k}\chi_{5\Delta}](Y,s)
    \\
    &\lesssim \sum_{i\leq N}\sum_{1\leq m\leq N_i}\tilde{N}(2^i)^{n+1}\inf_{(Y,s)\in \Delta_i^{m}}M[\tilde{k}\chi_{5\Delta}](Y,s)
    \\
    &\lesssim \sum_{i\leq N}\sum_{1\leq m\leq N_i}\int_{\Delta_i^{m}}M[\tilde{k}\chi_{5\Delta}](Y,s)d\sigma(Y,s)
    \\
    &\lesssim \sum_{i\leq N}\int_{\Ann_i}M[\tilde{k}\chi_{5\Delta}](Y,s)d\sigma(Y,s)
    \lesssim \int_{5\Delta}M[\tilde{k}\chi_{5\Delta}](Y,s)d\sigma(Y,s).
\end{align*}

Similarly, but simpler since we do not need to sum over different scales \(i\), we obtain for the integral of \(J_1\) over \(E_2\)
\begin{align*}
    &\int_{E_2}\frac{|\tilde{G}|}{\delta^2}dYds\lesssim \int_{5\Delta}M[\tilde{k}\chi_{5\Delta}](Y,s)d\sigma(Y,s).
\end{align*}
This gives that \(J_1\lesssim \eta^2\int_{5\Delta}M[\tilde{k}\chi_{5\Delta}](Y,s)d\sigma(Y,s)\). Let us show that \(J_2\) and \(I_4\) can be bounded by the same right hand side.
For \(J_2\) we proceed similar after using Cacciopolli inequality. We have

\begin{align*}
    J_2&\lesssim \eta^2\sum_{I\in W(\Delta)}\int_{(E_1\cup E_2)\cap I^+}\frac{|\nabla \tilde{G}|}{\delta}dYds
    \\
    &\lesssim \eta^2\sum_{I\in W(\Delta)}\frac{|(E_1\cup E_2)\cap I^+|^{1/2}}{l(I)}\big(\int_{(E_1\cup E_2)\cap I^+}|\nabla \tilde{G}|^2dYds\big)^{1/2}
    \\
    &\lesssim \eta^2\sum_{I\in W(\Delta)}\frac{|(E_1\cup E_2)\cap I^+|^{1/2}}{l(I)}\big(\int_{(E_1\cup E_2)\cap \tilde{I}^+}\big|\frac{\tilde{G}}{\delta}\big|^2dYds\big)^{1/2}
    \\
    &\lesssim\eta^2\sum_{I\in W(5\Delta)}\frac{|(E_1\cup E_2)\cap I^+|^{1/2}}{l(I)}\Big(\int_{(E_1\cup E_2)\cap \tilde{I}^+}|\inf_{(\tilde{Y},\tilde{s})\in \Delta_i^{m(I^+)}}M[\tilde{k}\chi_{5\Delta}](\tilde{Y},\tilde{s})|^2dYds\Big)^{1/2}
    \\
    &\lesssim \eta^2\sum_{I\in W(5\Delta), I^+\cap (E_1\cup E_2)\neq \emptyset}l(I)^{n+1}\inf_{(Y,s)\in \Delta_i^{m(I^+)}}M[\tilde{k}\chi_{5\Delta}](Y,s)
    \\
    &\lesssim \eta^2\int_{5\Delta}M[\tilde{k}\chi_{5\Delta}](Y,s)d\sigma(Y,s),
\end{align*}
where the last inequality follows as for \(J_1\).

\smallskip

Lastly, for \(I_4\), we integrate by parts and get by converting a full integral back into a double integral over the boundary and cones
\begin{align*}
    -I_4&\lesssim \frac{1}{\lambda^2}\int_{\hat{\Omega}} h \cdot\nabla v \tilde{G}\psi + h \cdot\nabla \tilde{G} v \psi + h \cdot\nabla\psi v \tilde{G} dYds
    \\
    &\lesssim\frac{1}{\lambda^2}\int_{E}\int_{\Gamma^{\tau l(\Delta)}(P,\tau)} \delta^{-n-1}h \cdot\nabla v \tilde{G}\psi + \delta^{-n-1}h \cdot\nabla \tilde{G} v \psi + \delta^{-n-1}h \cdot\nabla\psi v \tilde{G} dYds d\sigma(P,\tau)
    \\
    &=:H_1+H_2+H_3.
\end{align*}

For the first term, we use Cauchy-Schwarz inequality, \eqref{eq:G^*overdeltalesssimM[k]}, and properties of the set \(E\) in \eqref{eq:defSetEGoodlambda} to get
\begin{align*}
    H_1&\lesssim \frac{1}{\lambda^2} \int_E\tilde{N}(\delta|\nabla v|)\sum_{\substack{I\in W(5\Delta),\\ I^+\cap \Gamma^{\tau l(\Delta)}(Q,\tau)\neq \emptyset}}\frac{\sup_{(Y,s)\in I^+}\tilde{G}(Y,s)}{l(I)}l(I)\Big(\fint_{I^+}|h|^2dYds\Big)^{1/2} d\sigma(P,\tau)
    \\
    &\lesssim \frac{1}{\lambda^2}\int_E M[\tilde{k}\chi_{5\Delta}]T_
    2(h)\tilde{N}(\delta|\nabla v|) d\sigma(P,\tau)\lesssim \eta^2\int_{3\Delta} M[\tilde{k}\chi_{3\Delta}] d\sigma(P,\tau).
\end{align*}
For \(H_2\), we first apply Cacciopolli inequality and proceed then similarly to \(H_1\):
\begin{align*}
    H_2&\lesssim \frac{1}{\lambda^2} \int_E\sum_{\substack{I\in W(5\Delta),\\ I^+\cap \Gamma^{\tau l(\Delta)}(P,\tau)\neq \emptyset}}l(I)\Big(\fint_{I^+}|h|^2dYds\Big)^{1/2}\Big(\fint_{I^+}|\nabla \tilde{G}|^2dYds\Big)^{1/2}N(v)d\sigma(P,\tau)
    \\
    &\lesssim \frac{1}{\lambda^2} \int_E\sum_{\substack{I\in W(5\Delta),\\ I^+\cap \Gamma^{\tau l(\Delta)}(P,\tau)\neq \emptyset}}l(I)\Big(\fint_{I^+}|h|^2dYds\Big)^{1/2}\Big(\fint_{I^+}\frac{|\tilde{G}|^2}{l(I)^2}dYds\Big)^{1/2}N(v)d\sigma(P,\tau)
    \\
    &\lesssim \frac{1}{\lambda^2}\int_E T_2(h)M[\tilde{k}\chi_{3\Delta}]N(v) d\sigma(P,\tau)\lesssim \eta^2\int_{5\Delta} M[\tilde{k}\chi_{5\Delta}] d\sigma(P,\tau).
\end{align*}
The last term \(H_3\) is completely analogous to \(H_1\) with \(N(v)\) replacing \(\tilde{N}(\delta|\nabla v|)\).
\smallskip

Hence it remains to bound \(\int_{3\Delta} M[\tilde{k}\chi_{3\Delta}] d\sigma(P,\tau)\). Since \(\omega^*\in B_{q_1}(d\sigma)\) for some \({q_1}>1\) we obtain by \eqref{eq:BoundMaxRevHölder} the reverse Hölder inequality that defines the reverse Hölder space \(B_{q_1}(\sigma)\) and the doubling property of the parabolic measure
\begin{align*}
    &\int_{3\Delta} M[\tilde{k}\chi_{3\Delta}] d\sigma(x,t)\lesssim \sigma(\Delta)\Big(\fint_{3\Delta} M[\tilde{k}\chi_{3\Delta}]^{q_1} d\sigma(x,t)\Big)^{1/{q_1}}
    \\
    &\quad\lesssim \sigma(\Delta)\Big(\fint_{3\Delta} (\tilde{k})^{q_1} d\sigma(x,t)\Big)^{1/{q_1}}
    \lesssim \sigma(\Delta)\fint_{3\Delta} \tilde{k} d\sigma(x,t) = \tilde{\omega}(3\Delta)\lesssim \tilde{\omega}(\Delta).
\end{align*}
\qed

\subsection{Bound on \(\tilde{N}_{1,\rho}(\nabla F)\) (Proof of \reflemma{lemma:defv})}\label{subsection:BoundNF}
The proof consists of two parts. First, we need to show 
\begin{align}\Vert \tilde{N}_{1,\rho}(\nabla F)\Vert_{L^q(\partial\Omega)}\lesssim\int_\Omega \EPS(Y,s) \nabla_Y u_1(Y,s)\nabla_Y v(Y,s) dYds,\label{eq:proofLEmma2.19/1}\end{align}
before applying a stopping time argument to prove that
\begin{align}\int_\Omega \EPS(Y,s) \nabla_Y u_1(Y,s)\nabla_Y v(Y,s) dYds\lesssim \Vert \mu\Vert_{\mathcal{C}} \Vert \tilde{N}_{2}(\nabla u_1)\Vert_{L^q(\partial\Omega)}\Vert S(v)\Vert_{L^{q'}(\partial\Omega)}.\label{eq:proofLEmma2.19/2}\end{align}

The difficulty in the first part lies in finding appropriate approximations such that all the calculations can be justified. Since this is fairly standard, we would like to refer for all the details to the proof of Lemma 4.2.5 in \cite{ulmer_l_2024}.

\medskip

For the second part we need to distinguish between the two different assumptions \eqref{eq:CarlesonWithSupNorm} and \eqref{eq:CarlesonWithoutSupNorm}.
First we show \eqref{eq:proofLEmma2.19/2} assuming \eqref{eq:CarlesonWithSupNorm}. We set 
\[\mathcal{O}_j:=\{(P,t)\in\partial\Omega; \tilde{N}(\nabla u)S(v)(P,t)>2^j\}\]
as a super level set of \(\tilde{N}(\nabla u)S(v)\) and its enlargement as
\(\tilde{\mathcal{O}}_j:=\{M(\chi_{\mathcal{O}_j})>\frac{1}{2}\}\). Note that \(\sigma(\tilde{\mathcal{O}}_j)\approx\sigma(\mathcal{O}_j)\). Next, we introduce a decomposition of the domain into
\[F_j:=\bigg\{(X,t)\in\Omega; 
\begin{matrix}
    |\Delta(X^*,t,\delta(X,t))\cap\mathcal{O}_j|&> \frac{1}{2}|\Delta(X^*,t,\delta(X,t))|,
    \\
    |\Delta(X^*,t,\delta(X,t))\cap\mathcal{O}_{j+1}|&\leq \frac{1}{2}|\Delta(X^*,t,\delta(X,t))|\end{matrix}\bigg\}.
\]

We define \(\tilde{\Omega}(\tilde{\mathcal{O}}_j):=\{(X,t)\in\Omega; \Delta(X^*,t,\delta(X,t))\subset\tilde{\mathcal{O}}_j\}\) as the tent over \(\tilde{\mathcal{O}}_j\) and observe that due to construction \(F_j\subset \tilde{\Omega}(\tilde{\mathcal{O}}_j)\).
\smallskip

Now we apply the stopping time argument. We have 
\begin{align*}
 &\int_\Omega \EPS \nabla u_1\nabla v dYds\lesssim \sum_j \int_{\Omega\cap F_j} |\EPS\nabla u_1\nabla v| dYds
 \\
 &\qquad\leq \sum_j \int_{\tilde{\mathcal{O}}_j\setminus\mathcal{O}_j}\int_{\Gamma(P,t)\cap F_j} |\EPS|| \nabla u_1||\nabla v| \delta^{-n-1} dYds dPdt
 \\
 &\qquad\leq \sum_j \int_{\tilde{\mathcal{O}}_j\setminus\mathcal{O}_j}\Big(\int_{\Gamma(P,t)\cap F_j} |\nabla v|^2 \delta^{-n} dYds\Big)^{1/2}\Big(\int_{\Gamma(P,t)\cap F_j} |\nabla u_1|^2|\EPS|^2 \delta^{-n-2} dYds\Big)^{1/2} dPdt,
\end{align*}
where we applied Cauchy-Schwarz in the last line. For the integral involving \(u_1\) continue with
\begin{align}
    &\int_{\Gamma(P,t)\cap F_j} |\nabla u_1|^2|\EPS|^2 \delta^{-n-2} dYds\leq \sum_{I\in \mathcal{W}(P,t)}\int_{I^+\cap F_j} |\nabla u_1|^2|\EPS|^2 \delta^{-n-2} dYds\nonumber
    \\
    &\qquad\leq \sum_{I\in \mathcal{W}(P,t), I^+\cap F_j\neq \emptyset}\sup_{I^+\cap F_j}|\EPS|^2\fint_{I^+} |\nabla u_1|^2  dYds\label{eq:BeginOfNinsteadofTildeN}
    \\
    &\qquad\leq \tilde{N}_2(\nabla u_1)^2(P,\tau)\sum_{I\in \mathcal{W}(P,t)}\sup_{I^+\cap F_j}|\EPS|^2\nonumber
\end{align}
Since 
\[\sum_{I\in \mathcal{W}(P,t)}\sup_{I^+\cap F_j}|\EPS|^2\lesssim \int_{\Gamma_2(P,t)\cap \tilde{\Omega}(\tilde{\mathcal{O}}_{j-1})}\sup_{B(Y,s,\delta(Y,s)/2)}|\EPS|^2\delta^{-n-2}dYds,\]
we get by applying the Carleson measure property of \(\mu\) and Cauchy-Schwarz inequality
\begin{align}
    &\sum_j \int_{\tilde{\mathcal{O}}_j\setminus\mathcal{O}_j}\Big(\int_{\Gamma(P,t)\cap F_j} |\nabla v|^2 \delta^{-n} dYds\Big)^{1/2}\Big(\int_{\Gamma(P,t)\cap F_{j-1}} |\nabla u_1|^2|\EPS|^2 \delta^{-n-2} dYds\Big)^{1/2} dPdt\nonumber
    \\
    &\lesssim\sum_j \int_{\tilde{\mathcal{O}}_j\setminus\mathcal{O}_j}\tilde{N}_2(\nabla u_1)S(v)(P,t)\Big(\int_{\Gamma_2(P,t)\cap \tilde{\Omega}(\tilde{\mathcal{O}}_{j-1})}\sup_{B(Y,s,\delta(Y,s)/2)}|\EPS|^2 \delta^{-n-2} dYds\Big)^{1/2} dPdt\nonumber
    \\
    &\lesssim\sum_j 2^j|\mathcal{O}_j|^{1/2}\Big(\int_{\tilde{\mathcal{O}}_j}\int_{\Gamma_2(P,t)\cap \tilde{\Omega}(\tilde{\mathcal{O}}_j)}\sup_{B(Y,s,\delta(Y,s)/2)}|\EPS|^2 \delta^{-n-2} dYds dPdt\Big)^{1/2}\nonumber
    \\
    &\lesssim\sum_j 2^j|\mathcal{O}_j|^{1/2}\Big(\int_{\tilde{\Omega}(2\tilde{\mathcal{O}}_j)}\sup_{B(Y,s,\delta(Y,s)/2)}|\EPS|^2 \delta^{-1} dYds dPdt\Big)^{1/2}\label{eq:ReplaceNbyTildeN}
    \\
    &\lesssim\Vert \mu\Vert_{\mathcal{C}}\sum_j 2^j|\mathcal{O}_j|\lesssim \Vert \mu\Vert_{\mathcal{C}}\int_{\partial\Omega}\tilde{N}_2(\nabla u_1)S(v)dPdt\lesssim \Vert \mu\Vert_{\mathcal{C}}\Vert \tilde{N}_2(\nabla u_1)\Vert_{L^q(\partial\Omega)}\Vert S(v)\Vert_{L^{q'}(\partial\Omega)}.\nonumber
\end{align}


Let us now turn to the proof of \eqref{eq:proofLEmma2.19/2} assuming \eqref{eq:CarlesonWithoutSupNorm} instead.
If we assume \(\delta|\nabla A|\leq M\) we can follow an idea by Joseph Feneuil and Svitlana Mayboroda. Inspired by Lemma 2.15 in \cite{feneuil_green_2023}, which gives the pointwise bound \(|\nabla u_1(X,t)|\lesssim \frac{u_1(X,t)}{\delta(X,t)}\) for \((X,t)\in \Omega\) for an elliptic operator, we prove a parabolic version thereof in \reflemma{lemma:nablaUpointwiseBound}. The proof is provided in the appendix. Using this lemma, the only difference in the proof of \eqref{eq:proofLEmma2.19/2} lies in the estimate \eqref{eq:BeginOfNinsteadofTildeN}. Instead of putting the \(L^\infty\) norm on the \(\EPS\), we can now put it on \(\nabla u_1\) and hence replace \eqref{eq:BeginOfNinsteadofTildeN} by 
\begin{align*}
&\sum_{I\in \mathcal{W}(P,t), I^+\cap F_j\neq \emptyset}\sup_{I^+}|\nabla u_1|^2 dYds\fint_{I^+\cap F_j} |\EPS|^2 \delta^{-n-2} dYds
\\
&\lesssim \sum_{I\in \mathcal{W}(P,t), I^+\cap F_j\neq \emptyset}\fint_{\tilde{I}^+}|\nabla u_1|^2 dYds\fint_{I^+\cap F_j} |\EPS|^2 \delta^{-n-2} dYds.
\end{align*}
As a consequence, we get the Carleson norm of \(d\mu(X,t)=\frac{|\EPS(X,t)|^2}{\delta(X,t)}\) in \eqref{eq:ReplaceNbyTildeN}, which yields \eqref{eq:proofLEmma2.19/2}. The rest of the proof work completely analogously.

\subsection{Proof of the bound on \(S(v)\) (Proof of \refcor{cor:S(v)lesssimT_p(h)})}
The good \(\lambda\)-inequality in \reflemma{Lemma2.11} allows to conclude via by layer cake representation of the integral
    \begin{align*}
        \int_{\partial\Omega} S(v)^r d\sigma&= \gamma\int_{\partial\Omega} S(v)^r d\sigma +C_\eta  \int_{\partial\Omega} N(v)^r d\sigma +  C_\eta\int_{\partial\Omega} \tilde{N}(\delta|\nabla v|)^r d\sigma
        \\
        &\qquad + C_\eta  \int_{\partial\Omega} T_2(h)^r d\sigma.
    \end{align*}
    Since the first term can be hidden on the left hand side for a small choice of \(\gamma\), we obtain
    \begin{align*}
        \int_{\partial\Omega} S(v)^r d\sigma&\lesssim \int_{\partial\mathcal{O}\times\mathbb{R}} N(v)^r d\sigma + \int_{\partial\mathcal{O}\times\mathbb{R}} \tilde{N}(\delta|\nabla v|)^r d\sigma + \int_{8\Delta_\rho} T_2(h)^r d\sigma.
    \end{align*}

    Now we would like to make use of the pointwise bounds on the nontangential maximal functions from \reflemma{Lemma2.10}, and claim that
    \begin{align}
        \int_{\partial\mathcal{O}\times\mathbb{R}} N(v)^r d\sigma + \int_{\partial\mathcal{O}\times\mathbb{R}} \tilde{N}(\delta|\nabla v|)^r d\sigma\lesssim \int_{8\Delta_\rho} M_{\omega^\rho}(T_p(h))^r d\sigma,\label{eq:CorN(v)lesssimT_p}
    \end{align}
    which completes the proof. However, the restriction of the integration domain on the right hand side makes this is a nontrivial statement.
    
    \medskip
    To show \eqref{eq:CorN(v)lesssimT_p}, we observe that \(\tilde{N}(\delta|\nabla v|)\) is pointwise bounded by \(N(v)\) like shown in the proof of \reflemma{Lemma2.10} and hence it is enough to show the bound
    \begin{align}\int_{\partial\mathcal{O}\times\mathbb{R}} N(v)^r d\sigma\lesssim \int_{8\Delta_\rho} M_{\omega^\rho}(T_p(h))^r d\sigma.\label{eq:N(v)boundedbyDelta8}\end{align}

    We start by defining parabolic cylinder subdomains. Set a region around the support of \(h\), a flattened Carleson box, as
    \[\tilde{U}:=T(6\Delta_\rho)\setminus\{(X,t)\in \Omega; \delta(X,t)\geq \frac{4}{\rho}\},\]
    and divide \(\Omega\) in \(\tilde{U}\) and the three time pieces
    \[U_0:=\mathcal{O}\times(\frac{16}{\rho^2},\infty), U_1:=\mathcal{O}\times[-\frac{16}{\rho^2},\frac{16}{\rho^2})\setminus \tilde{U},\textrm{ and }U_2:=\mathcal{O}\times(-\infty,-\frac{16}{\rho^2}).\] 
   
    Now, we define
    \begin{align}w(X,t):=\frac{\inf_{2\Delta_\rho}M_{\omega^\rho}(T_p(h))}{\rho^n}G^*_1(\bar{A}(8\Delta_\rho), X,t),\label{eq:defOfuForMaxPrinc}\end{align}
    and claim that 
    \begin{align}v(X,t)\lesssim w(X,t)\textrm{ for all }(X,t)\in U_1\cup U_2.\label{eq:Claimvlesssimu}\end{align}
    Since \(v\equiv 0\) on \(U_0\), we do not incorporate this case into the claim. The advantage of replacing estimating \(N(v)\) by \(N(w)\) in \eqref{eq:N(v)boundedbyDelta8} is that \(w\) is basically a Green's function with explicit growth bounds which we will utilize. Let us postpone the proof of the claim until the end.

    \smallskip
    We begin by partitioning \(\partial\Omega=\partial\mathcal{O}\times\mathbb{R}\). Therefore, we use \((X,t)=\bar{A}(8\Delta_\rho)\) and its projection \(X^*\in \partial\mathcal{O}\) into the boundary of the spatial domain. We define a decomposition of \(\partial\mathcal{O}\) with respect to this point \(X^*\). Set
    \(\Ann^l:=\partial\mathcal{O}\cap (Q(X,\frac{1}{\rho}(l+1))\setminus Q(X,\frac{1}{\rho}l))\) for \(l\geq 1\), and \(\Ann^0:=\partial\mathcal{O}\cap Q(X,\frac{1}{\rho}l)\). Due to \(\mathcal{O}\) being a uniform domain we know that \(\sigma(\Ann^l)\approx \big(\frac{1}{\rho}\big)^{n-1}((l+1)^{n-1}-l^{n-1})\approx \big(\frac{1}{\rho}\big)^{n-1}l^{n-2}\). 

    Next, we can construct in time translated sets
    \[\Delta_k^l:=\Ann^l\times (t+\frac{1}{\rho^2}k, t+\frac{1}{\rho^2}(k+1)) \qquad \textrm{ for }k\in \mathbb{Z},\]
    which have volume \(|\Delta_k^l|\approx \big(\frac{1}{\rho}\big)^{n+1}l^{n-2}\). This gives a decomposition of \(\partial\Omega\), i.e.
    \[\partial\Omega=\bigcup_{k\in \mathbb{Z},l\geq 0}\Delta_k^l.\]
    Now, we are going to bound \(N(v)\) separately on each \(\Delta_k^l\) by two competing bounds \eqref{eq:BoundonN(v)inProof1} and \eqref{eq:BoundonN(v)inProof2}. First, assume that \(k\geq 0\) and that \((P,\tau)\in \Delta_k^l\setminus 8\Delta_\rho\). For a sufficiently small aperture less than \(\frac{1}{3}\) (which we call \(\tilde{c}\) here to avoid confusion with the \(\alpha\) from Boundary Hölder) we have that \(\Gamma_{\tilde{c}}(P,\tau)\subset U_0\cup U_1\cup U_2\), and hence \(N(v)\lesssim N(w)\). Furthermore, let us define \(\bar{c}:=\sqrt{(1+\tilde{c})^2-1}\). Since \(v(Y,s)=0\) for \(s>t-\frac{4}{\rho^2}\), we obtain by the growth estimate of the Green's function (\reflemma{lemma:ParabolicGrwothEstimateForGwithalpha})
    \begin{align*}
        N(v)(P,\tau)&\lesssim \sup_{(Y,s)\in \Gamma_{\tilde{c}}(P,\tau)\cap\{s\leq \frac{4}{\rho^2}\}}\frac{\inf_{2\Delta_\rho}M_{\omega^\rho}(T_p(h))}{\rho^n}\delta(X,t)^\alpha\Big(|X-Y|^2+(t-s)\Big)^{-\frac{n+\alpha}{2}}
        \\
        &\lesssim \sup_{(Y,s)\in \Gamma_{\tilde{c}}(P,\tau)}\frac{\inf_{2\Delta_\rho}M_{\omega^\rho}(T_p(h))}{\rho^n}\big(\frac{1}{\rho}\big)^{\alpha}
        \\
        &\qquad\cdot\Big(\Big|\sqrt{\big(\frac{l}{\rho}-\bar{c}\delta(Y,s)\big)^2+\delta(Y,s)^2}\Big|^2+\Big|\frac{1+k}{\rho^2}-\bar{c}^2\delta(Y,s)^2\Big|\Big)^{-\frac{n+\alpha}{2}}
        \\
        &\lesssim \sup_{\delta\geq 0}\inf_{2\Delta_\rho}M_{\omega^\rho}(T_p(h))\Big((l-\bar{c}\delta)^2+\delta^2+|(k+1)-\bar{c}^2\delta^2|\Big)^{-\frac{n+\alpha}{2}}
        \\
        &\lesssim \sup_{\delta\geq 0}\inf_{2\Delta_\rho}M_{\omega^\rho}(T_p(h))\Big(|l|^2+(k+1)+\delta^2\Big)^{-\frac{n+\alpha}{2}}.
    \end{align*}
    Hence, we obtain
    \begin{align}
        N(v)(P,\tau)\lesssim \inf_{2\Delta_\rho}M_{\omega^\rho}(T_p(h))\Big(|l|^2+(k+1)\Big)^{-\frac{n+\alpha}{2}}.\label{eq:BoundonN(v)inProof1}
    \end{align}
    If we have \((P,\tau)\in \Delta_k^l\) for a negative time step \(k<0\), then we obtain the same with \(|k|\) replacing \(k\). 
\smallskip
    
    Similarly, let us obtain a different bound on \(\Delta_k^l\) for \(k\in\mathbb{Z}, l\geq 0\). Using the other growth estimate of the Green's function (\reflemma{lemma:ParabolicGrowthEstimateForG}) we have
    \begin{align*}
        &N(v)(P,\tau)\lesssim \sup_{(Y,s)\in \Gamma_{\tilde{c}}(P,\tau)}\frac{\inf_{2\Delta_\rho}M_{\omega^\rho}(T_p(h))}{\rho^n}(t-s)^{-\frac{n}{2}}e^{-C\frac{|X-Y|^2}{t-s}}
        \\
        &\quad\lesssim \sup_{(Y,s)\in \Gamma_{\tilde{c}}(P,\tau)\cap\mathcal{O}\times(-\infty, \frac{16}{\rho^2}]}\frac{\inf_{2\Delta_\rho}M_{\omega^\rho}(T_p(h))}{\rho^n}\big(\frac{1}{\rho}\big)^{n}(\rho^{-2}(k+1)-\bar{c}^2\delta(Y,s)^2)^{-\frac{n}{2}}
        \\
        &\hspace{80mm}\cdot e^{-C\frac{(l-\bar{c}\rho\delta(Y,s))^2+\rho^2\delta(Y,s)^2}{k+1-\bar{c}^2\rho^2\delta(Y,s)^2}}
        \\
        &\quad\lesssim \sup_{\frac{\sqrt{k}}{\bar{c}}\geq \delta\geq 0}\inf_{2\Delta_\rho}M_{\omega^\rho}(T_p(h))(k+1-\bar{c}^2\delta^2)^{-\frac{n}{2}}e^{-C\frac{l^2+\delta^2}{k+1-\bar{c}^2\delta^2}}
        \\
        &\quad\lesssim \sup_{\frac{\sqrt{k}}{\bar{c}}\geq \delta\geq 0}\inf_{2\Delta_\rho}M_{\omega^\rho}(T_p(h))(k+1-\bar{c}^2\delta^2)^{-\frac{n}{2}}e^{-C\frac{l^2}{k+1-\bar{c}^2\delta^2}}
    \end{align*}

    Since the function \(u\mapsto u^{-n/2}e^{-c\frac{l^2}{u}}\) for \(u\geq 0\) is maximized in \(u=\sqrt{c}l^2\), we have that for \(l>\frac{1}{\sqrt{C}}\sqrt{k+1}\)
    \[(k+1-\bar{c}^2\delta^2)^{-\frac{n}{2}}e^{-C\frac{l^2}{k+1-\bar{c}^2\delta^2}}\lesssim (k+1)^{-\frac{n}{2}}e^{-C\frac{l^2}{k+1}}.\]
    Hence there exists \(\tilde{C}>0\), such that if \((P,\tau)\in \Delta_k^l\) with \(l>\tilde{C}\sqrt{k}\), we also obtain
    \begin{align}
        N(v)(P,\tau)\lesssim \inf_{2\Delta_\rho}M_{\omega^\rho}(T_p(h))(k+1)^{-\frac{n}{2}}e^{-C\frac{l^2}{k+1}}.\label{eq:BoundonN(v)inProof2}
    \end{align}

    Now, we combine the two different estimates \eqref{eq:BoundonN(v)inProof1} and \eqref{eq:BoundonN(v)inProof2}.
    We take \(\vartheta>0\) to be a small constant chosen later and have
    \begin{align*}
        &\int_{\partial\mathcal{O}\times\mathbb{R}} N(v)^r d\sigma\leq \int_{8\Delta_\rho} N(v)^r d\sigma + \sum_{k\in \mathbb{Z}, l\geq 0}\int_{\Delta_k^l}N(v)^r d\sigma
        \\
        &\qquad\leq\int_{8\Delta_\rho} N(v)^r d\sigma + \sum_{k\in \mathbb{Z}}\Big(\sum_{l\leq \tilde{C}k^{\frac{1}{2}+\vartheta}}\int_{\Delta_k^l}N(v)^r d\sigma + \sum_{l>\tilde{C}k^{\frac{1}{2}+\vartheta}}\int_{\Delta_k^l}N(v)^r d\sigma\Big).
    \end{align*}

    First, let us estimate the sum over small \(l\) by using \eqref{eq:BoundonN(v)inProof1} and observing that
    \begin{align*}
        &\sum_{l\leq \tilde{C}k^{\frac{1}{2}+\vartheta}}|\Delta_k^l|\Big(|l|^2+|k|+1\Big)^{-r\frac{n+\alpha}{2}}
        \lesssim \Big(\frac{1}{\rho}\Big)^{n+1}\sum_{l\leq \tilde{C}k^{\frac{1}{2}+\vartheta}}l^{n-2}\Big(|l|^2+|k|+1\Big)^{-r\frac{n+\alpha}{2}}
        \\
        &\qquad\lesssim \Big(\frac{1}{\rho}\Big)^{n+1}\sum_{l\leq \tilde{C}k^{\frac{1}{2}+\vartheta}}(k^{\frac{1}{2}+\vartheta})^{n-2}\Big(|k|+1\Big)^{-r\frac{n+\alpha}{2}}
        \lesssim \Big(\frac{1}{\rho}\Big)^{n+1} k^{-\frac{n}{2}(r-1)-r\frac{\alpha}{2}-1 +(n-1)\vartheta}.
    \end{align*}
    Choosing \(\vartheta\) sufficiently small only depending on \(n\), such that \(\frac{\alpha}{2}>(n-1)\vartheta\), we obtain 
    \[k^{-\frac{n}{2}(r-1)-r\frac{\alpha}{2}-1 +(n-1)\vartheta}\leq k^{-1-\frac{n}{2}(r-1)}.\]
    Hence, we obtain for such a choice of \(\vartheta\)
    \begin{align*}
        &\sum_{l\leq \tilde{C}k^{\frac{1}{2}+\vartheta}}\int_{\Delta_k^l}N(v)^r d\sigma\lesssim\Big(\frac{1}{\rho}\Big)^{n+1}\inf_{2\Delta_\rho}M_{\omega^\rho}(T_p(h))^r k^{-1-\frac{n}{2}(r-1)}.
    \end{align*}

    Next, we have for the sum over large spacial displacements \(l\) under the use of \eqref{eq:BoundonN(v)inProof2} that
    \begin{align*}
         &\sum_{l> \tilde{C}k^{\frac{1}{2}+\vartheta}}|\Delta_k^l|(k+1)^{-r\frac{n}{2}}e^{-rC\frac{l^2}{k+1}}
         \lesssim\Big(\frac{1}{\rho}\Big)^{n+1}\sum_{l>\tilde{C}k^{\frac{1}{2}+\vartheta}}l^{n-2}e^{-rC\frac{l^2}{k+1}}
         \\
         &\qquad\lesssim\Big(\frac{1}{\rho}\Big)^{n+1}(k+1)^{-r\frac{n}{2}}\int_{\tilde{C}k^{\frac{1}{2}+\vartheta}}^\infty l^{n-2}e^{-rC\frac{l^2}{k+1}}dl
         \\
         &\qquad\lesssim\Big(\frac{1}{\rho}\Big)^{n+1}(k+1)^{-r\frac{n}{2}}k^{\frac{n-1}{2}}\int_{k^{\vartheta}}^\infty u^{n-2}e^{-rCu^2}du.
    \end{align*}
    If \(n\) is odd, then the antiderivative of \(u\mapsto u^{n-2}e^{-rCu^2}\) is given by \(P(u)e^{-u^2}\), where \(P\) is a polynomial in \(u\) of degree \(n-3\). To see this, we can use the substitution \(\tilde{u}=u^2\) to get \(\tilde{u}^{\frac{n}{2}-1}e^{-\tilde{u}}\), and then apply integration by parts iteratively until the monomial \(u^{\frac{n}{2}-1}\) vanishes. However, if \(n\) is even, we can just bound the integrand by the same integrand with \(n+1\) instead of \(n\) to obtain the odd \(n\) case. Overall we get for some polynomial \(P\) with degree depending only on \(n\)
    \[\int_{k^{\vartheta}}^\infty u^{n-2}e^{-rCu^2}d\rho\lesssim P(k^{\vartheta})e^{-rCk^{2\vartheta}}.\]
    Hence, we get for the sum over large \(l\)
    
    \begin{align*}
         &\sum_{l> \tilde{C}k^{\frac{1}{2}+\vartheta}}\int_{\Delta_k^l}N(v)^r d\sigma      \lesssim\Big(\frac{1}{\rho}\Big)^{n+1}\inf_{2\Delta_\rho}M_{\omega^\rho}(T_p(h))^r\frac{P(k^{\vartheta})}{k^{\frac{n}{2}(r-1)+\frac{1}{2}}}e^{-rCk^{2\vartheta}}.
    \end{align*}
    
    Hence in total, we obtain
    \begin{align*}
        &\int_{\partial\mathcal{O}\times\mathbb{R}} N(v)^r d\sigma\leq\int_{8\Delta_\rho} N(v)^r d\sigma + \sum_{k\in \mathbb{Z}}\Big(\sum_{l\leq \tilde{C}k^{\frac{1}{2}+\vartheta}}\int_{\Delta_k^l}N(v)^r d\sigma + \sum_{l>\tilde{C}k^{\frac{1}{2}+\vartheta}}\int_{\Delta_k^l}N(v)^r d\sigma\Big)
        \\
        &\qquad\lesssim \int_{8\Delta_\rho} N(v)^r d\sigma + \sum_{k\in \mathbb{Z}} \inf_{2\Delta_\rho}M_{\omega^\rho}(T_p(h))^r\Big(\frac{1}{\rho}\Big)^{n+1}\Big(k^{-1-\frac{n}{2}(r-1)} + \frac{P(k^{\vartheta})}{k^{\frac{n}{2}(r-1)+\frac{1}{2}}}e^{-rCk^{2\vartheta}}\Big)
        \\
        &\qquad\lesssim \int_{8\Delta_\rho} N(v)^r d\sigma + \int_{2\Delta_\rho}M_{\omega^\rho}(T_p(h))^r\sum_{k\in \mathbb{Z}} \Big(k^{-1-\frac{n}{2}(r-1)} + \frac{P(k^{\vartheta})}{k^{\frac{n}{2}(r-1)+\frac{1}{2}}}e^{-rCk^{2\vartheta}}\Big)
        \\
        &\qquad\lesssim \int_{8\Delta_\rho} M_{\omega^\rho}(T_p(h))^rd\sigma,
    \end{align*}
    which completes the proof of \eqref{eq:CorN(v)lesssimT_p}.
    \smallskip

    \medskip
    To complete the proof, we need to address the claim \eqref{eq:Claimvlesssimu}. Recall the definition of \(w\) in \eqref{eq:defOfuForMaxPrinc}. Instead of proving \eqref{eq:Claimvlesssimu} directly, we will apply the maximum principle on in space truncated versions \(U_1^R:=\mathcal{O}^R\times[-\frac{16}{\rho^2},\frac{16}{\rho^2})\setminus \tilde{U}\) and \(U_2^R:=\mathcal{O}^R\times(-\infty,-\frac{16}{\rho^2})\) where \(\mathcal{O}^R:=\mathcal{O}\cap T(\Delta(0, M_R))\). Here \(M_R\) is chosen such that \(v(X,t)\leq \frac{1}{R}\) for points \((X,t)\in U_1\setminus U_1^R\) and \((X,t)\in U_2\setminus U_2^R\).

    We will now show that there are uniform implicit constants such that
    \begin{align}
        v(Y,s)\lesssim w(Y,s) + \frac{1}{R} \qquad \textrm{for all }(Y,s)\in \partial U_1^R,\label{eq:CorMaxPrinciple1}
    \end{align}
    and 
    \begin{align}
        v(Y,s)\lesssim w(Y,s) + \frac{1}{R} \qquad \textrm{for all }(Y,s)\in \partial U_2^R.\label{eq:CorMaxPrinciple2}
    \end{align}
    Then, by maximum principle and letting \(R\) go to infinity, we obtain \(v(Y,s)\lesssim w(Y,s)\) on \(U_1\cup U_2\).
    \smallskip
    
    To prove \eqref{eq:CorMaxPrinciple1} and \eqref{eq:CorMaxPrinciple2}, we first observe that if \(t=\frac{16}{\rho^2}\), then
    \[v(Y,s)=0\leq w(Y,s).\]
    If \((Y,s)\in \partial \tilde{U}\) with \(\delta(Y,s)\approx \frac{1}{\rho}\), then
    \[G^*_1(\bar{A}(8\Delta_\rho), Y,s)\approx \rho^n\omega_{L_1}(\bar{A}(8\Delta_\rho), \Delta(Y^*,s,\frac{1}{\rho}))\geq c\rho^n,\]
    where the last inequality follows from Bourgain's estimate (Theorem 1.6 in \cite{genschaw_weak_2020}). Hence, \reflemma{Lemma2.10} yields
    \[v(Y,s)\lesssim \frac{\inf_{2\Delta_\rho}M_{\omega^\rho}(T_p(h))}{\rho^n}\rho^n\lesssim w(Y,s).\]
    If \((Y,s)\in \partial \tilde{U}\) with \(\delta(Y,s)\leq \frac{1}{\rho}\), then we split \(v=v^+-v^-\) into its positive and negative part in the sense that
    \begin{align*}
        &\begin{cases}
        L_1^*v^+=\mathfrak{h}^+:=\max \{0, \mathrm{div}_X(h)\} & \textrm{ in }\Omega,
        \\
        v^+=0&\textrm{ on }\partial\Omega,
        \end{cases} \qquad \textrm{and}
    \\ 
        &\begin{cases}
        L_1^*v^-=\mathfrak{h}^-:=\max \{0, -\mathrm{div}_X(h)\} & \textrm{ in }\Omega,
        \\
        v^-=0&\textrm{ on }\partial\Omega.
    \end{cases}
    \end{align*}
    From \(v^\pm(X,t)=\int_\Omega G(X,t,Y,s)\mathfrak{h}^\pm(Y,s) dYds\) it is clear that the integrand is nonnegative and hence \(v^\pm\) are nonnegative solutions on \(\tilde{U}\). This allows us to apply the local Comparison Principle which yields
    \begin{align*}
        \frac{v^\pm(Y,s)}{G^*_1(\bar{A}(8\Delta_\rho),Y,s)}&\lesssim \frac{v^\pm(\underline{A}_{1/\rho}(Y^*,s))}{G^*_1(\bar{A}(8\Delta_\rho),\bar{A}_{1/\rho}(Y^*,s))}\lesssim \frac{1}{\rho^n}v^\pm(\underline{A}_{1/\rho}(Y^*,s))
        \\
        &\lesssim \frac{1}{\rho^n}\inf_{2\Delta_\rho}M_{\omega^\rho}(T_p(h)),
    \end{align*}
    and hence \(v(X,t)\lesssim w(X,t)\).
    Since we chose \(M_R\) such that for all \((X,t)\in \Omega\) with \(|(X,t)-0|\geq M_R\) we obtain \(v(X,t)\leq \frac{1}{R}\leq w(X,t)\), we completed the proof of \eqref{eq:CorMaxPrinciple1} and \eqref{eq:CorMaxPrinciple2}.
    \qed


\section{Proof of \refthm{MainTheorem} for bounded base \(\mathcal{O}\)}\label{section:BoundedMainTheorem}
As mentioned before, the proof of \refthm{MainTheorem} for \(\Omega=\mathcal{O}\times\mathbb{R}\) with bounded \(\mathcal{O}\) is very similar to the above given proof for unbounded \(\mathcal{O}\). In this subsection, we would like to point out the differences.

Here we consider an only in time and close to the boundary truncated version of the nontangential maximal function.
\begin{defin}
    We define the truncated nontangential maximal function
    \[\tilde{N}_{1,\rho}[g](P,t):=\sup_{(Y,s)\in \Gamma^{\rho}(P,t)}\fint_{B(Y,s,\delta(Y,s)/2)}|g(X,\tau)|dXd\tau, \]
    where 
    \[\Gamma^{\rho}(P,t):=\Gamma(P,t)\cap \mathcal{O}\times [-\frac{1}{\rho^2},\frac{1}{\rho^2})\cap \{(X,s):\delta(X,s)>\rho\}.\]
\end{defin}

We obtain the same representation of the difference function \(F\) as in \reflemma{lemma:defF} and the quasi-dualising function \(h=h^\rho\), that satisfies
\begin{align*}
    \Vert\tilde{N}_{1,\rho}[\nabla F]\Vert_{L^q(\partial\Omega)}&\lesssim \int_\Omega \nabla F\cdot h dXdt,
\end{align*}
is also obtained in exactly the same way as in \reflemma{lemma:defh}. We define \(v\) exactly as in \eqref{vDefinedByDivh} and obtain 
\[\Vert \tilde{N}_{1,\rho}(\nabla F)\Vert_{L^q(\partial\Omega)}\lesssim \Vert\tilde{N}_{1,\rho}(\nabla u_0)\Vert_{L^q(\partial\Omega)}\Vert S(v)\Vert_{L^{q'}(\partial\Omega)}\]
like in \reflemma{lemma:defv}. Up to this point there were no real adjustments that differ from the case of unbounded base \(\mathcal{O}\). 
\smallskip

The reason why the core lemmas \reflemma{Lemma2.10}, \reflemma{Lemma2.11} and \refcor{cor:S(v)lesssimT_p(h)} need to be adjusted is that if \(\rho\) is sufficiently small, then there is no corkscrew point of \(\partial\Omega\cap Q(0,0,\frac{1}{\rho})\) in \(\Omega\). Hence, we do need to adapt these proofs, and partition the boundary of the domain into smaller boundary balls \(\Delta_k\), whose corresponding corkscrew points lie in the domain. Then the arguments of \reflemma{Lemma2.10}, \reflemma{Lemma2.11} and \refcor{cor:S(v)lesssimT_p(h)} work on thee smaller balls similarly to before, while we only need to demonstrate that the results on each \(\Delta_k\) can be reassembled to a global bound.
\smallskip

Specifically, let us introduce a countable collection of boundary balls \(\Delta_k\) such that \(\partial\mathcal{O}\times \mathbb{R}\subset \bigcup_k \Delta_k\) and the \(\Delta_k\) have finite overlap. Furthermore, we ask that \(\mathrm{diam}(\mathcal{O})\approx\mathrm{diam}(\Delta_k)\) such that there exists a backwards in time corkscrew point \(A^k=\underline{A}(2\Delta_k)\in \Omega\). We call \(\omega^k\) the parabolic measure of \(L_1^*\) with pole \(A^k\).  
\smallskip

Now we can replace \reflemma{Lemma2.10} with a for this setting modified version.

\begin{lemma}\label{lemma:ParabolicLemma2.10BoundedBase}
We have for \(p>\frac{n}{2}+1\) and \((P,t)\in \Delta^k\)
\[N(v)(P,t)+\tilde{N}(\delta|\nabla v|)(P,t)\lesssim M_{\omega^k}(T_p(h))(P,t).\]
\end{lemma}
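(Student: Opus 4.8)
The plan is to run the proof of \reflemma{Lemma2.10} essentially verbatim, replacing the single reference measure $\omega^\EPS$ throughout by the local measure $\omega^k$ attached to the pole $A^k=\underline{A}(2\Delta_k)$, where $\Delta_k$ is the ball of the covering containing the boundary point under consideration. First I would fix $(P,\tau)\in\Delta_k$ and $(X,t)\in I^+$ with $I^+\in\mathcal{W}(P,\tau)$ and introduce the same dyadic objects as there: $\Delta_j:=\Delta(X^*,t,2^{j-1}\delta(X,t))$, $\Omega_j$, the annuli $R_j:=\Omega_j\setminus(\Omega_{j-1}\cup Q(X,t,\delta(X,t)/2))$, and the corkscrew points $(X_j,t_j):=\underline{A}(\Delta_j)$, $(X_j,\overline{t_j}):=\bar{A}(\Delta_j)$. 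Since $\mathcal{O}$ is bounded there is an index $j_0\approx\log_2(\mathrm{diam}(\mathcal{O})/\delta(X,t))$ past which $\Delta_j$ no longer grows in the spatial variable; for $j\le j_0$ the geometry is identical to the unbounded case. Using $v(X,t)=\int_\Omega\nabla_Y G_1^*(X,t,Y,s)\,h(Y,s)\,dY\,ds$, I would split $v=\tilde v+v_0+\sum_{1\le j\le j_0}v_j+v_{\mathrm{top}}$, with $\tilde v$ the integral over $Q(X,t,\delta(X,t)/2)$, $v_j$ over $R_j$, and $v_{\mathrm{top}}$ over $\Omega\setminus\Omega_{j_0}$.

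For $\tilde v$, $v_0$ and each $v_j$ with $j\le j_0$ I would copy the argument of \reflemma{Lemma2.10} line by line: item (4) of \refprop{prop:GreensFctProperties} gives $\tilde v\lesssim T_p(h)(P,\tau)$, and the chain Caccioppoli (\reflemma{lemma:Cacc}) $\to$ Harnack (\reflemma{lemma:HarnackInequality}) $\to$ Boundary Hölder (\reflemma{lemma:BoundaryHölder}) $\to$ local comparison (\refprop{prop:LocalComparisonPrinciple}) $\to$ Backwards Harnack (\reflemma{lemma:BackwardsHarnack}) $\to$ comparison principle (\reflemma{lemma:ComparisonPrinciple}) reduces each $v_j$ to $2^{-j\alpha}\fint_{\Delta_j}T_2(h)\,d\omega^k$, the only change being that the auxiliary Green's function is $G_1^*(A^k,\cdot)$ in place of $G_1^*(\underline{A}(Q(0,2/\EPS)),\cdot)$; since $(P,\tau)\in\Delta_k$ and $2\Delta_k$ has diameter $\approx\mathrm{diam}(\mathcal{O})$, the pole $A^k$ plays precisely the role of $\underline{A}(\Delta_\EPS)$ there. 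Summing the geometric series gives $v_0+\sum_{j\le j_0}v_j\lesssim M_{\omega^k}(T_p(h))(P,\tau)$.

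The one genuinely new point is $v_{\mathrm{top}}$: on $\Omega\setminus\Omega_{j_0}$ the spatial separation $|X-Y|$ is already $\lesssim\mathrm{diam}(\mathcal{O})$ while the time separation $t-s$ still ranges over dyadic scales $2^{2j}$, $j>j_0$, for which \reflemma{lemma:ComparisonPrinciple} is out of range (it needs $|t-s|\approx\delta(Y,s)^2$). I would replace it by the polynomial-in-time decay $G_1^*(X,t,Y,s)\lesssim\delta^\alpha\big(|X-Y|^2+(t-s)\big)^{-(n+\alpha)/2}$ of \reflemma{lemma:ParabolicGrwothEstimateForGwithalpha}, together with a Caccioppoli on a Whitney cube to pass to $\nabla_Y$; because $\mathrm{supp}(h)$ is compact in time only finitely many dyadic time-slabs contribute, and on each the mass of $|h|$ is, after Cauchy--Schwarz and a switch of pole to $A^k$ performed near the pole (where time separations are bounded and \reflemma{lemma:ComparisonPrinciple} applies), controlled by an $\omega^k$-average of $T_2(h)$. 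The factor $2^{-j\alpha}$ coming from the growth estimate makes the sum a geometric series with $\EPS$-independent total, so $v_{\mathrm{top}}(X,t)\lesssim M_{\omega^k}(T_p(h))(P,\tau)$.

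Adding the four pieces yields $N(v)(P,\tau)\lesssim M_{\omega^k}(T_p(h))(P,\tau)$. The bound on $\tilde N(\delta|\nabla v|)$ then follows exactly as in \reflemma{Lemma2.10}: the pointwise inequality \eqref{eq:DeltaN(nablaV)}, $\tilde N(\delta|\nabla v|)^2\lesssim N^{1,5/4}(v)^2+T_1(h)N^{1,5/4}(v)+T_1(h)^2$, is purely local --- its proof uses only $L_1v=\Div_X(h)$ inside a fixed Whitney cube and a cutoff, and is blind to whether $\mathcal{O}$ is bounded --- so it combines with the bound on $N^{1,5/4}(v)$ just obtained (run with the enlarged cubes $\tilde I$) to finish. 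The hard part, and essentially the only place real work is needed, is the bookkeeping in $v_{\mathrm{top}}$: arranging the dyadic time-slabs beyond $j_0$ so that every Green's-function ratio collapses to a single $\omega^k(\Delta_j)^{-1}$ and the resulting series telescopes to $M_{\omega^k}(T_p(h))$ with constants independent of $\EPS$; the rest is a transcription of the unbounded argument with $\mathrm{diam}(\mathcal{O})$ absorbed into the implicit constants.
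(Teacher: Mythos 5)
Your plan agrees with the paper up to parabolic distance $\mathrm{diam}(\mathcal{O})$ from $(X,t)$, and your observation that the pointwise inequality \eqref{eq:DeltaN(nablaV)} is local and hence portable to the bounded case is exactly what the paper implicitly relies on (it only writes out the $N(v)$ bound). But for the far‑past contribution — your $v_{\mathrm{top}}$ — you diverge from the paper and there is a gap in your proposed workaround. The paper does not keep the parabolic dyadic decomposition $\Delta_j$ past the scale of $\mathrm{diam}(\mathcal{O})$; it instead splits $v$ over dyadic \emph{time slabs} $U_i := (t-2^iR^2,\, t-2^{i-1}R^2]\times\mathcal{O}$ with $R := \mathrm{diam}(\mathcal{O})$. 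On each slab it still runs the Caccioppoli $\to$ Harnack $\to$ local comparison chain, and crucially the \emph{local} comparison principle (\refprop{prop:LocalComparisonPrinciple}) compares two pole‑free, boundary‑vanishing solutions at a single location and scale, so it remains valid arbitrarily far in the past; it collapses $G(X,t,\cdot)/G^k(\cdot)$ to the single ratio $(\omega^*)^{(X,t)}(S_i)/\omega^k(\Delta_{i+2})$. The paper's genuinely new ingredient — with no analogue in the unbounded proof — is the \emph{exponential} decay $(\omega^*)^{(X,t)}(S_i)\lesssim e^{-\alpha 2^i}$ of the parabolic measure of the far‑past lateral slab, a bounded‑cross‑section phenomenon obtained by iterating the maximum principle across time slices of thickness $R^2$ together with the Gaussian bound of \reflemma{lemma:ParabolicGrowthEstimateForG}. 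That decay absorbs whatever polynomial losses occur elsewhere and makes the series telescope to $M_{\omega^k}(T_p(h))(P,\tau)$.

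Your proposal instead invokes the polynomial decay $G\lesssim\delta^\alpha(|X-Y|^2+(t-s))^{-(n+\alpha)/2}$ of \reflemma{lemma:ParabolicGrwothEstimateForGwithalpha}. That bound controls $G(X,t,\cdot)$ on its own, but what the argument actually requires is a bound of the form $G(X,t,Y,s)\lesssim(\text{decay}_j)\cdot\omega^k(\Delta_j)^{-1}G^k(Y,s)$, so that after Cauchy--Schwarz, Caccioppoli, and a further comparison on $G^k$, the Whitney‑cube sum produces $\fint_{\Delta_j}T_2(h)\,d\omega^k$ and hence $M_{\omega^k}(T_p(h))(P,\tau)$. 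You acknowledge that \reflemma{lemma:ComparisonPrinciple} is out of range when $|t-s|\gg\delta(Y,s)^2$, but your remedy — "a switch of pole to $A^k$ performed near the pole where time separations are bounded" — does not explain how a near‑pole comparison controls an $\omega^k$‑average over a boundary piece far in the past, which is precisely the step the growth estimate alone cannot supply. Concretely: the growth estimate gives you a number, not a ratio against $G^k$, and without the ratio you never see $\omega^k$. To repair this you would have to reinstate the $G/G^k$ ratio via local comparison on the far‑past slabs (which is legitimate) and then control the resulting $(\omega^*)^{(X,t)}$‑factor — at which point you are back to needing the paper's decay estimate, or at minimum some quantitative decay of $(\omega^*)^{(X,t)}$ on far‑past sets. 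So the two routes are genuinely different, and yours is incomplete as written at the point where $\omega^k$ must appear.
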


\begin{proof}
Fix \(k\), set \(R:=\mathrm{diam}(\mathcal{O})\), and let \((P,\tau)\in \Delta_k\) with \((X,t)\in \Gamma(P,\tau)\). We define \(\Delta_j,\Omega_j,R_j,(X_j,t_j),(X_j,\bar{t}_j)\) for \(0\leq j\leq N\) as in the case for unbounded \(\mathcal{O}\) in the proof of \reflemma{Lemma2.10}. We choose \(N\) such that \(4R\leq 2^{N-1}\delta(X)\leq 8R\) and define  thick times slices \(U_i:=\mathcal{O}\times(t-2^iR^2,t-2^{i-1}R^2]\) for \(i\geq 1\) with \(U_0:=\mathcal{O}\times (t-R^2,t+R^2]\). Note that \((X,t)\in U_0\).

Now, we can split \(v\) up into different time piece integration domains
\begin{align*}
    v(X,t)&=\int_\Omega G_1(X,t,Y,s)\Div_Y(h)(Y,s)dYds=\int_\Omega \nabla_Y G_1(X,t,Y,s)h(Y,s)dYds
    \\
    &=\sum_{i\geq 0}\int_{U_i} \nabla_Y G_1(X,t,Y,s)h(Y,s)dYds
    =:\sum_{i\geq 0} I_i.
\end{align*}

Since \(U_0\subset\bigcup_{j=1}^\infty R_j\cup\Omega_0\cup Q(X,t,\delta(X,t)/2)\), we can do the same calculations as in the unbounded \(\Omega\) case (proof of \reflemma{Lemma2.10}) to obtain
\[I_0\lesssim M_{\omega^\EPS}(T_p(h))(P,\tau).\]

Next, let us consider \(I_i\). We call \(S_i\) the lateral boundary of \(U_i\), and denote by \(\bar{A}(S_i)=(a_x,t-(2^{i-1}-1)R^2)\) and \(\underline{A}(S_i)=(a_x,t-(2^{i}+1)R^2)\) a "forward or backwards in time corkscrew point" of \(S_i\). Here, \(a_x\in \mathcal{O}\), the center point of \(\mathcal{O}\), is chosen such that \(\mathrm{dist}(a_x,\partial\mathcal{O})\approx R\). The points \(\bar{A}(S_i)\) and \(\underline{A}(S_i)\) are not corkscrew points in the classical sense, since \(S_i\) does not have any that lie in the domain \(\Omega\), but they play the same role here. Furthermore, we set \(G^k(Y,s):=G(A^k,Y,s)\). Completely analogous to the proof of \(v_0\) in the unbounded \(\mathcal{O}\) case, we obtain by Cacciopolli and Harnack inequality

\begin{align*}
    |I_i|&\lesssim \sum_{I\in \mathcal{W}(S_i)}\Big(\int_{I^+}|\nabla_Y G(X,t,Y,s)|^2dYds\Big)^{1/2}\Big(\int_{I^+}|h(Y,s)|^2dYds\Big)^{1/2}
    \\
    &
    \lesssim \sum_{I\in \mathcal{W}(S_i)}\Big(\frac{1}{l(I)^2}\int_{\tilde{I}^+}|G(X,t,Y,s)|^2dYds\Big)^{1/2}\Big(\int_{I^+}|h(Y,s)|^2dYds\Big)^{1/2}
    \\
    &
    \lesssim \sum_{I\in \mathcal{W}(S_i)}\Big(\frac{1}{l(I)^2}\int_{\tilde{I}^+}\big|\frac{G(X,t,Y,s)}{G^k(Y,s)}G^k(Y,s)\big|^2dYds\Big)^{1/2}\Big(\int_{I^+}|h(Y,s)|^2dYds\Big)^{1/2}.
\end{align*}

Further, we use the local Comparison Principle, Backwards Harnack inequality, and the Comparison Principle to get
\begin{align*}
    \int_{\tilde{I}^+}\big|\frac{G(X,t,Y,s)}{G^k(Y,s)}G^k(Y,s)\big|^2dYds&\lesssim\int_{\tilde{I}^+}\big|\frac{G(X,t,\overline{A}(S_i))}{G^k(\underline{A}(S_i))}G^k(Y,s)\big|^2dYds
    \\
    &\lesssim \int_{\tilde{I}^+}\big|\frac{G(X,t, \underline{A}(S_i))}{G^k(\underline{A}(S_i))}G^k(Y,s)\big|^2dYds
    \\
    &\lesssim \int_{\tilde{I}^+}\big|\frac{R^nG(X,t, \underline{A}(S_i))}{\omega^k(\Delta_{i+2})}G^k(Y,s)\big|^2dYds
    \\
    &\lesssim \frac{R^{2n}G(X,t, \underline{A}(S_i))^2}{\omega^k(\Delta_{i+2})^2}\int_{\tilde{I}^+}\frac{\omega^k(I)^2}{l(I)^{2n}}dYds.
\end{align*}
By the growth estimate of the Green's function (\reflemma{lemma:ParabolicGrowthEstimateForG}) we observe
\[R^{n}G(X,t, \underline{A}(S_i))\lesssim R^n\frac{1}{2^{n(i-1)}R^n}\lesssim \frac{1}{2^{in}},\]
whence
\begin{align*}
    |I_i|& \lesssim \sum_{I\in \mathcal{W}(S_i)}\frac{2^{-in}}{\omega^k(\Delta_{i+2})}\Big(\frac{1}{l(I)^2}\int_{\tilde{I}^+}\frac{\omega^k(I)^2}{l(I)^{2n}}dYds\Big)^{1/2}\Big(\int_{I^+}|h(Y,s)|^2dYds\Big)^{1/2}
    \\
    &\lesssim 2^{-in} M_{\omega^k}(T_p(h))(P,\tau).
\end{align*}
Here the last inequality followed as in the proof for \(v_0\) in the unbounded \(\Omega\) case. As a consequence, we can finish the proof by summing over all \(i\)
\[|v(X,t)|\lesssim \sum_{i=0}^\infty I_i\lesssim \sum_{i=0}^\infty 2^{-in} M_{\omega^k}(T_p(h))(P,\tau)\lesssim M_{\omega^k}(T_p(h))(P,\tau).\]
\end{proof}

The good \(\lambda\)-inequality (\reflemma{Lemma2.11}) also holds verbatim. However, the proof needs to be adapted for larger Whitney cubes.

\begin{proof}[Proof adjustments for \reflemma{Lemma2.11}]
Analogously to the unbounded \(\mathcal{O}\) case, we take a Whitney cube \(\Delta\) of \(\{S[v]>\lambda\}\). We distinguish two cases here. If the Whitney cube \(\Delta\) is small, i.e. \(l(\Delta)<\frac{1}{2}\mathrm{diam}(\mathcal{O})\), then we can proceed completely analogously to the unbounded case without modifications. 

\medskip

If \(\Delta\) is a large Whitney cube, i.e. \(l(\Delta)\geq\frac{1}{2}\mathrm{diam}(\mathcal{O})\), then we set \(E_k:=E\cap \Delta_k\), where \(E\) is as before given by \eqref{eq:defSetEGoodlambda}. Now, we notice that each \(\Delta_k\) is a small cube in above sense, but no Whitney cube of the original set \(\{S(v)>\lambda\}\). However, we only used this fact to deduce the existence of \(\tau\) such that the truncated square function \(S_{\tau l(\Delta)}(v)>\frac{\lambda}{2}\) on \(E\). This argument is not necessary in this case, and we can replace every truncated square function \(S_{\tau l(\Delta)}(v)\) by the usual nontruncated square function \(S(v)\) and notice that still \(S(v)>\lambda\) on \(E_k\). Hence, we can reproduce the argument from the unbounded \(\mathcal{O}\) case and we obtain
\[\omega^k(E_k)\lesssim \eta^2\omega^k(\Delta_k).\]
Here we denote by \(\omega^k\) the parabolic measure to the operator \(L_1^*\) with pole in \(A^k\), the backwards in time corkscrew point of \(\Delta_k\).
Since \(\omega_{L_1^*}\in A_\infty(\sigma)\) there exists a sufficiently small \(\eta>0\) such that
\[\sigma(E_k)\leq \frac{\gamma}{C}\sigma(\Delta_k),\]
where \(C\) is the doubling constant of \(\omega_{L_1^*}\). Thus, by our construction and the doubling property we have
\[\sigma(E)\leq \sum_{k}\sigma(E_k)\leq \frac{\gamma}{C}\sum_{k}\sigma(\Delta_k)\lesssim \frac{\gamma}{C}\sigma(2\Delta)\leq \gamma\sigma(\Delta).\]
\end{proof}

The consequence of the previous two lemmas is the following modified corollary, which replaces \refcor{cor:S(v)lesssimT_p(h)}.
\begin{cor}\label{cor:S(v)lesssimT_p(h)Bounded}
We have for \(r>1\)
\[\int_{\partial\Omega} S(v)^r d\sigma \leq C\sum_k\int_{\Delta_k} M_{\omega^k}(T_p(h))^r d\sigma. \]
\end{cor}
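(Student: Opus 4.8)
The plan is to mimic the proof of Corollary \ref{cor:S(v)lesssimT_p(h)} for unbounded base, i.e. to combine the good-$\lambda$ inequality (Lemma \ref{Lemma2.11}, which the preceding paragraph asserts holds verbatim in the bounded case) with the pointwise bound of Lemma \ref{lemma:ParabolicLemma2.10BoundedBase}, the only change being that the ``far field'' part of the argument is now trivial because $\partial\Omega = \bigcup_k \Delta_k$ is covered by finitely many boundary balls of diameter comparable to $\operatorname{diam}(\mathcal{O})$.

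First I would run the good-$\lambda$ distributional argument exactly as in the proof of Corollary \ref{cor:S(v)lesssimT_p(h)}: writing $\int_{\partial\Omega} S(v)^r\,d\sigma = \int_0^\infty \sigma(\{S(v)>\lambda\})\lambda^{r-1}\,d\lambda$, splitting according to whether $N[v]+\tilde N[\delta|\nabla v|]+T_2(h) \le \eta\lambda$ or not, applying Lemma \ref{Lemma2.11} to the first piece, hiding $\gamma\int_{\partial\Omega}S(v)^r\,d\sigma$ on the left for $\gamma$ small, and bounding the remainder by $C_\eta\big(\int_{\partial\Omega} N(v)^r\,d\sigma + \int_{\partial\Omega}\tilde N(\delta|\nabla v|)^r\,d\sigma + \int_{\partial\Omega}T_2(h)^r\,d\sigma\big)$. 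As in the unbounded case, $\tilde N(\delta|\nabla v|)$ is pointwise dominated by $N^{1,5/4}(v)$ via \eqref{eq:DeltaN(nablaV)}, so the $\tilde N(\delta|\nabla v|)$ term is absorbed into the $N(v)$ term; and the $T_2(h)$ term is controlled by $M_{\omega^k}(T_p(h))$ on each $\Delta_k$ (since $T_2(h)\le T_p(h)\le M_{\omega^k}(T_p(h))$ pointwise by Lebesgue differentiation). Thus it suffices to show $\int_{\partial\Omega} N(v)^r\,d\sigma \lesssim \sum_k \int_{\Delta_k} M_{\omega^k}(T_p(h))^r\,d\sigma$.

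For this last bound I would use Lemma \ref{lemma:ParabolicLemma2.10BoundedBase}: for $(P,t)\in\Delta_k$ one has $N(v)(P,t)\lesssim M_{\omega^k}(T_p(h))(P,t)$, so
\[
\int_{\partial\Omega} N(v)^r\,d\sigma \le \sum_k \int_{\Delta_k} N(v)^r\,d\sigma \lesssim \sum_k \int_{\Delta_k} M_{\omega^k}(T_p(h))^r\,d\sigma,
\]
the first inequality using that the $\Delta_k$ cover $\partial\Omega$ with bounded overlap (independent of $\EPS$). Feeding this back, together with the $T_2(h)$ estimate, into the good-$\lambda$ chain finishes the proof. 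Unlike the unbounded case there is no need for the delicate quasi-annular decomposition of $\partial\mathcal{O}\times\mathbb{R}$ and the Gaussian-tail bookkeeping via the growth estimate of the Green's function, because the whole boundary is now a finite union of the $\Delta_k$'s; the main thing to check carefully is simply that the overlap of the $\{\Delta_k\}$ and the constants produced by Lemma \ref{lemma:ParabolicLemma2.10BoundedBase} and Lemma \ref{Lemma2.11} are all uniform in $\EPS$, which follows because $\operatorname{diam}(\Delta_k)\approx\operatorname{diam}(\mathcal{O})$ is fixed. The only minor obstacle is the bookkeeping in the good-$\lambda$ step to ensure the right-hand side can be localized to $\bigcup_k \Delta_k$ rather than all of $\partial\Omega$; but this is immediate here since $\partial\Omega=\bigcup_k\Delta_k$.
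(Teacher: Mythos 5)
Your proposal is correct and takes essentially the same route as the paper: the paper states that this corollary follows directly from the good-$\lambda$ distributional argument of Corollary \ref{cor:S(v)lesssimT_p(h)} once Lemma \ref{lemma:ParabolicLemma2.10BoundedBase} replaces Lemma \ref{Lemma2.10}, which is precisely what you carry out — the distributional computation plus the pointwise bound on each $\Delta_k$, with the finite overlap of $\{\Delta_k\}$ replacing the Gaussian far-field bookkeeping. (One small slip: the collection $\{\Delta_k\}$ is countably infinite, not finite, since $\partial\mathcal{O}\times\mathbb{R}$ is unbounded in time; what is actually used is the finite-overlap property, which you do invoke, and the fact that Lemma \ref{lemma:ParabolicLemma2.10BoundedBase} covers every $(P,t)\in\partial\Omega$ with no leftover region.)
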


This time, this corollary is a direct consequence of the good \(\lambda\)-inequality. The right hand side is now different from the right hand side of \reflemma{lemma:BoundednessOfS(v)}. However, we still have a modified version of \reflemma{lemma:BoundednessOfS(v)}.

\begin{lemma}\label{lemma:BoundednessOfS(v)Bounded}
    We have
    \[\sum_k\int_{\Delta_k} M_{\omega^k}(T_p(h))^{q_1'} d\sigma\leq C<\infty.\]
\end{lemma}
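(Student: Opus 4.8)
The plan is to follow the proof of \reflemma{lemma:BoundednessOfS(v)} almost verbatim, the only new feature being that the single reference ball $\Delta_\EPS$ gets replaced by the finitely overlapping family $\{\Delta_k\}$, so an extra and essentially harmless summation over $k$ appears at the end. First I would check that the hypotheses survive passing to a bounded base: $(RP)^{L_0}_{q_0}$ still implies $(DP)^{(L_0)^*}_{q_0'}$ by \refprop{prop:ParabolicRPimpliesDP}, i.e.\ $\omega_0^*\in B_{q_0}(\sigma)$, and \refprop{prop:ParabolicPerturbationTheoryForDP} (valid for cylinders over a CAD, hence over a bounded Lipschitz domain) upgrades this to $\omega_1^*\in B_{q_1}(\sigma)$ for some $1<q_1\leq q_0$, with $q_1=q_0$ when $\Vert\mu\Vert_{\mathcal{C}}$ is small. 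Since each $\omega^k$ is the parabolic measure of $L_1^*$ with pole $A^k=\underline{A}(2\Delta_k)$, using that the $B_{q_1}$ condition for $\omega_1^*$ holds with reference ball $2\Delta_k$ and invoking \refthm{thm:PropertiesOfMuckenhouptSpace}(2) would give, with a fixed dilation constant $C$,
\[\int_{\Delta_k}M_{\omega^k}(T_p(h))^{q_1'}\,d\sigma\lesssim \int_{C\Delta_k}(T_p(h))^{q_1'}\,d\sigma\qquad\text{for every }k.\]

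Proving this last inequality is the one place that needs care, because $M_{\omega^k}$ is a supremum over all radii and so a priori involves $T_p(h)$ on all of $\partial\Omega$. I would exploit that $\mathrm{diam}(\Delta_k)\approx\mathrm{diam}(\mathcal{O})$ is the maximal spatial scale: for $(Q,t)\in\Delta_k$ and radii $r\gtrsim\mathrm{diam}(\mathcal{O})$ one has $\omega^k(\Delta(Q,t,r))\gtrsim\omega^k(\Delta_k)\gtrsim 1$ (the lower bound via Harnack together with Theorem~1.6 of \cite{genschaw_weak_2020}, exactly as in the proof of \eqref{eq:CorMaxPrinciple1}), and moreover $T_p(h)$ is supported in a fixed bounded subset of $\partial\Omega$ since $\mathrm{supp}(h)$ is bounded by \reflemma{lemma:defh}. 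Hence the large-radius part of $M_{\omega^k}(T_p(h))$ on $\Delta_k$ is $\lesssim\int|T_p(h)|\,d\omega^k$, a quantity that decays in $k$ as $\Delta_k$ moves away from $\mathrm{supp}(T_p(h))$ and is therefore summable against $\sigma(\Delta_k)$, while the small-radius part only sees $T_p(h)$ on $C\Delta_k$; this yields the displayed bound once combined with the summation below.

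Finally I would sum over $k$: since $\{\Delta_k\}$, hence $\{C\Delta_k\}$, has overlap bounded by a constant depending only on the domain (and not on $\EPS$), $\sum_k\int_{C\Delta_k}(T_p(h))^{q_1'}\,d\sigma\lesssim\int_{\partial\Omega}(T_p(h))^{q_1'}\,d\sigma$. By \reflemma{lemma:BoundednessOfT(h)} we have $T_p(h)\lesssim_p\tilde{T}(\gamma)$ pointwise and $\Vert\tilde{T}(\gamma)\Vert_{L^{q_1'}(\partial\Omega)}\leq C<\infty$, so the right-hand side is finite, which is the assertion. I expect the localization of $M_{\omega^k}$ to be the only genuine obstacle; all the rest is a transcription of the unbounded argument with $(\omega^\EPS,\Delta_\EPS)$ replaced by $(\omega^k,\Delta_k)$, together with the harmless finite-overlap summation.
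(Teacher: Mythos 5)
Your proposal matches the paper's argument. Both proofs reduce the claim to the maximal-function boundedness encoded in $\omega_1^*\in B_{q_1}(\sigma)$ (obtained via Proposition~\ref{prop:ParabolicRPimpliesDP} and Proposition~\ref{prop:ParabolicPerturbationTheoryForDP} and exploited through Theorem~\ref{thm:PropertiesOfMuckenhouptSpace}(2)), applied with reference ball $\Delta_k$ and pole $A^k$ for each $k$, then summed using the bounded overlap of $\{\Delta_k\}$, and closed with Lemma~\ref{lemma:BoundednessOfT(h)}. The only substantive difference is that you explicitly flag and resolve the localization issue — $M_{\omega^k}$ is a supremum over all radii, so on $\Delta_k$ it sees $T_p(h)$ outside $\Delta_k$ as well — via the replacement $\Delta_k\mapsto C\Delta_k$ on the right and the decay of $\omega^k$ away from $\Delta_k$; the paper passes over this silently (its displayed chain also writes an equality $\sum_k\int_{\Delta_k}=\int_{\partial\Omega}$ that should be $\lesssim$ by finite overlap), so your version is a more careful rendering of the same route rather than a different one.
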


\begin{proof}
By \eqref{eq:BoundMaxRevHölder} we have
\[\sum_k\int_{\Delta_k} M_{\omega^k}(T_p(h))^{q_1'} d\sigma\lesssim \sum_k\int_{\Delta_k} T_p(h)^{q_1'} d\sigma=\int_{\partial\Omega} T_p(h)^{q_1'} d\sigma.\]
Since the proof of \reflemma{lemma:BoundednessOfT(h)} is verbatim the same for a bounded base \(\mathcal{O}\), we have \(\int_{\partial\Omega} T_p(h)^{q_1'} d\sigma\leq C<\infty\) for some constant \(C>0\) which finishes the proof.
\end{proof}

The final proof of \refthm{MainTheorem} for bounded \(\mathcal{O}\) follows as before in Subsection \ref{subsection:UnboundedStructure}, since all the necessary have been established.

\section{Proof of perturbation for Neumann problem (\refthm{thm:MaintheoremNeumann})}\label{section:ProofNeumann}

First, let \(g\in \dot{H}^{-1/4}_{\partial_t-\Delta}(\partial\Omega)\cap L^{q_0}(\Omega)\) be the boundary data for the Neumann problem of \(L_0\) and \(u_0\) the corresponding solutions. 
We set \(\EPS_0:=\Vert\mu\Vert_{\mathcal{C}}\) and \(u_1(X,t):=u_0(X,t)+F(X,t)\) where \(F\) is defined as in \reflemma{lemma:defF}. As in the proof for the perturbation of the Regularity problem, we have that \(L_1u_1=0\). However, in contrast to the Regularity problem, \(F\) does not have zero Neumann boundary data which means that the Neumann boundary data obtained by \(u_1\) is not \(g\). Instead, we can observe that for every test function \(\phi\in C_C^\infty(\mathbb{R}^{n+1})\)
\begin{align}
    &\int_\Omega -u_1(X,t)\partial_t\phi(X,t) + A_1(X,t)\nabla u_1(X,t) \cdot\nabla\phi(X,t)dXdt\nonumber
    \\
    &=\int_{\partial\Omega}g(X,t)\phi(X,t)d\sigma(X,t) + \int_\Omega \EPS(X,t)\nabla u_0(X,t)\cdot \nabla v(X,t) dXdt,\label{eq:Neumannsomething}
\end{align}
where \(v\) solves the Dirichlet problem
\begin{align}\begin{cases}
        L_1^* v=0 & \textrm{ in }\Omega, \\
        v=\phi & \textrm{ on }\partial\Omega.
    \end{cases}\label{vDefinedNeumann}\end{align}
We postpone the proof of \eqref{eq:Neumannsomething} to the appendix \ref{subsection:appendix2}, since it mainly consists of standard approximation arguments. We continue with defining the operator \(T\) on \(L^{q_0}(\partial\Omega)\cap \dot{H}^{-1/4}_{\partial_t-\Delta}(\partial\Omega)\) by setting
\[\int_\Omega T(g)(X,t)\phi(X,t)dXdt=\int_\Omega \EPS(X,t)\nabla u_0(X,t)\cdot \nabla v(x,t) dXdt\]
for every test function \(\phi\in C_C^\infty(\mathbb{R}^{n+1})\).
\smallskip

With this in hand we observe that by a stopping time argument exactly as in the proof of \reflemma{lemma:defv} we obtain 
\[\int_\Omega \EPS(X,t)\nabla u_0(X,t)\cdot \nabla v(x,t) dXdt\lesssim \Vert\mu\Vert_{\mathcal{C}}\Vert \tilde{N}_2(\nabla u_0)\Vert_{L^{q_0}(\partial\Omega)}\Vert S(v)\Vert_{L^{q_0'}(\partial\Omega)}.\]
By assumption of solvability of the \(L^{q_0}\) Neumann problem for \(L_0\) we have \(\Vert\tilde{N}(\nabla u_0)\Vert_{L^{q_0}(\partial\Omega)}\lesssim \Vert g\Vert_{L^{q_0}(\partial\Omega)}\). Since we also assumed solvability of the parabolic Dirichlet problem for \(L_1^*\) we have \(\Vert S(v)\Vert_{L^{q_0'}}\approx \Vert N(v)\Vert_{L^{q_0'}}\lesssim \Vert \phi\Vert_{L^{q_0'}}\) by Theorem 5.1 in \cite{nystrom_dirichlet_1997} (the theorem is only formulated for domains with bounded spatial base \(\mathcal{O}\), but the proof works verbatim for unbounded Lipschitz graph spatial domains \(\mathcal{O}\)). Hence we obtain that \(\Vert T(g)\Vert_{L^{q_0}}\lesssim \EPS_0\Vert g\Vert_{L^{q_0}}\).

On the other hand, we have \[\int_\Omega \EPS(X,t)\nabla u_0(X,t)\cdot \nabla v(x,t) dXdt\lesssim \EPS_0\Vert \nabla u_0\Vert_{L^2(\Omega)}\Vert \nabla v\Vert_{L^2(\Omega)}\lesssim \EPS_0\Vert u_0\Vert_{\dot{E}}\Vert v\Vert_{\dot{E}}.\]
By Section \ref{subsection:EnergySol} we have 
\(\Vert u_0\Vert_{\dot{E}}\lesssim \Vert g\Vert_{\dot{H}^{-1/4}_{\partial_t-\Delta}(\partial\Omega)}\) and \(\Vert v\Vert_{\dot{E}}\lesssim \Vert \phi\Vert_{\dot{H}^{1/4}_{\partial_t-\Delta}(\partial\Omega)}\).
Thus we obtain \(\Vert T(g)\Vert_{\dot{H}^{-1/4}_{\partial_t-\Delta}(\partial\Omega)}\lesssim \EPS_0\Vert g\Vert_{\dot{H}^{-1/4}_{\partial_t-\Delta}(\partial\Omega)}\), and that
\[T: \dot{H}^{-1/4}_{\partial_t-\Delta}(\partial\Omega)\cap L^{q_0}(\Omega)\to \dot{H}^{-1/4}_{\partial_t-\Delta}(\partial\Omega)\cap L^{q_0}(\Omega)\]
is a bounded linear operator.
Furthermore, for sufficiently small \(\EPS_0\)
\[R:\dot{H}^{-1/4}_{\partial_t-\Delta}(\partial\Omega)\cap L^{q_0}(\Omega)\to \dot{H}^{-1/4}_{\partial_t-\Delta}(\partial\Omega)\cap L^{q_0}(\Omega), R(g):=g+T(g)\]
is still a linear operator bounded in the norms of \(L^{q_0}\) and \(\dot{H}^{-1/4}_{\partial_t-\Delta}(\partial\Omega)\), and has a bounded inverse by the open mapping theorem. 

Now given \(f\in \dot{H}^{-1/4}_{-\partial_t-\Delta}(\partial\Omega)\cap L^{q_0}(\Omega)\), there exists \(g=R^{-1}(f)\in \dot{H}^{-1/4}_{-\partial_t-\Delta}(\partial\Omega)\cap L^{q_0}(\Omega)\) and we can define \(u_0\) as the solution to \(L_0u_0=0\) with Neumann data \(g\). Setting \(u_1:=u_0+F\) gives a solution \(u_1\) to \(L_1u_1=0\) that satisfies \eqref{eq:Neumannsomething}. However the right hand side of \eqref{eq:Neumannsomething} is now 
\begin{align*}
    &\int_{\partial\Omega}g(X,t)\phi(X,t)d\sigma(X,t) + \int_{\partial\Omega}T(g)(X,t)\phi(X,t) d\sigma(X,t)
    \\
    &\qquad=\int_{\partial\Omega}R(g)(X,t)\phi(X,t)d\sigma(X,t)=\int_{\partial\Omega}f(X,t)\phi(X,t)d\sigma(X,t).
\end{align*}
Hence \(u_1\) attains the given Neumann data \(f\). Furthermore as in the proof of \refthm{MainTheorem}, we can use the combination of \reflemma{lemma:defv}, \refcor{cor:S(v)lesssimT_p(h)}, \reflemma{lemma:BoundednessOfS(v)} to conclude \(\Vert\tilde{N}(\nabla F)\Vert_{L^{q_0}}\lesssim \Vert \mu\Vert_{\mathcal{C}} \Vert\tilde{N}(\nabla u_0)\Vert_{L^{q_0}}\). Hence we have
\begin{align*}
    \Vert\tilde{N}(\nabla u_1)\Vert_{L^{q_0}}&\leq \Vert\tilde{N}(\nabla u_0)\Vert_{L^{q_0}} + \Vert\tilde{N}(\nabla F)\Vert_{L^{q_0}}\lesssim (1+\EPS_0)\Vert\tilde{N}(\nabla u_0)\Vert_{L^{q_0}}
    \\
    &\lesssim \Vert g\Vert_{L^{q_0}}\lesssim \Vert f\Vert_{L^{q_0}},
\end{align*}
where we used the boundedness of \(R^{-1}\) in \(L^{q_0}\) in the last step. This proves solvability of \((N)^{L_1}_{q_0}\) and hence \refthm{thm:MaintheoremNeumann}.

\appendix
\section{Appendix}

\subsection{Condition \eqref{eq:CarlesonWithoutSupNorm}: Gradient bound for solution}
Inspired by the Lemma 2.15 in (\cite{feneuil_green_2023}) we prove the following parabolic version.
\begin{lemma}\label{lemma:nablaUpointwiseBound}
    Assume that \(L:=\partial_t - \mathrm{div}(A\nabla \cdot)\) and \(|\nabla A(X,t)|\lesssim \frac{1}{\delta(X,t)}\). Let \(u\) be a solution of \(Lu=0\). Then 
    \[\sup_{Q(X,t,\delta(X,t)/4)}|\nabla u|\lesssim  \Big(\fint_{Q(X,t,\delta(X,t)/3)}|\nabla u|^2\Big)^{\frac{1}{2}} \lesssim \frac{\sup_{Q(X,t,\delta(X,t)/2)}|u|}{\delta(X,t)}.\]
\end{lemma}

To prove \reflemma{lemma:nablaUpointwiseBound} we need to establish the following Cacciopolli type lemma first. Therefore, we set \(w_i:=\partial_i u\) for \(i=1,...,n\) and notice that
\[Lw_i=\mathrm{div}(\partial_i A\nabla u).\] Recall also that we use the notation \(Q(X,r)\) for a cube in \(\mathcal{O}\) and \(Q(X,t,r)\) for a cube in \(\Omega\) with radius \(r\).

\begin{lemma}[Cacciopolli type inequality]\label{lemma:CaccioppoliTypeIterationInequality}
Assume that \(L:=\partial_t - \mathrm{div}(A\nabla \cdot)\) and \(|\nabla A(X,t)|\lesssim \frac{1}{\delta(X,t)}\). Let \(0<r_1<r_2<\delta(X,t), \alpha>0\) and \(u\) a solution to \(Lu=0\). Then for \(w_i:=\partial_i u\)
\[\sum_{i=1}^n\int_{Q(X,t,r_1)} |w_i|^\alpha|\nabla w_i|^2dYds\lesssim \sum_{i=1}^n\Big(\frac{1}{(r_2-r_1)^2}+\frac{1}{\delta(X,t)^2}\Big)\int_{Q(X,t,r_2)} |w_i|^{2+\alpha}dYds,\]
and
\begin{align*}&\sum_{i=1}^n\sup_{s\in (-r_1+t,t+r_1)}\int_{Q(X,r_1)} |w_i(Y,s)|^{\alpha+2}dY
\\
&\qquad \lesssim \sum_{i=1}^n\Big(\frac{1}{(r_2-r_1)^2}+\frac{1}{\delta(X,t)^2}\Big)\int_{Q(X,t,r_2)} |w_i|^{2+\alpha}dYds.\end{align*}
\end{lemma}

\begin{proof}
    Let \(0\leq \eta\leq 1\) be a smooth cut-off function with \(\eta\equiv 1\) on \(Q(X,t,r_1)\) and \(\eta\equiv 0\) on \(\Omega\setminus Q(X,t,r_2)\), where we also assume \(|\nabla \eta|(r_2-r_1)+|\partial_t\eta|(r_2-r_1)^2\lesssim 1\). Then we have

    \begin{align*}
        \int_{Q(X,t,r_2)}|w|^\alpha|\nabla w_i|^2\eta^2dYds&\lesssim \int_{Q(X,t,r_2)}A\nabla w_i\nabla (w_i|w_i|^\alpha\eta^2)dYds
        \\
        &\qquad - \int_{Q(X,t,r_2)}A\nabla w_i\nabla \eta^2 |w_i|^\alpha w_idYds.
    \end{align*}

    The second integral can be bounded above via Cauchy-Schwarz by
    \[\Big(\int_{Q(X,t,r_1)} |w_i|^\alpha|\nabla w_i|^2\eta^2dYds\Big)^{\frac{1}{2}}\Big(\int_{Q(X,t,r_1)} |w_i|^{2+\alpha}|\nabla \eta^2|dYds\Big)^{\frac{1}{2}}.\]
    We can apply Young's inequality and hide the first factor on the left hand side. The second factor is what we expect on the right hand side.

    \smallskip
    The first integral becomes via the PDE
    \begin{align*}
        \int_{Q(X,t,r_2)}A\nabla w_i\nabla (w_i|w_i|^\alpha\eta^2)dYds&=\int_{Q(X,t,r_2)}\partial_i A\nabla u\nabla (w_i|w_i|^\alpha\eta^2)dYds
        \\
        &- \int_{Q(X,t,r_2)}\partial_t w_i (w_i|w_i|^\alpha\eta^2)dYds =:I_i+II_i.
    \end{align*}

    For each \(II_i\) we obtain by integration by parts
    \[II_i=\frac{1}{2}\int_{Q(X,t,r_2)}(|w_i|^{2+\alpha}+w_i|w_i|^{1+\alpha})\partial_t\eta^2dYds\lesssim \frac{1}{(r_2-r_1)^2}\int_{Q(X,t,r_2)}|w_i|^{2+\alpha}dYds.\]

    For \(I_i\) we need to sum over all \(i\) and obtain
    \begin{align*}
        \sum_{i=1}^n I_i&\lesssim \sum_{i=1}^n\sup_{Q(X,t,r_2)}|\nabla A|\Big(\int_{Q(X,t,r_2)}|\nabla u|^2|w_i|^\alpha dYds\Big)^{\frac{1}{2}}
        \\
        &\qquad\cdot\Big(\int_{Q(X,t,r_2)}|\nabla w_i\eta^2 + w_i\nabla \eta^2|^2|w_i|^\alpha dYds\Big)^{\frac{1}{2}}
    \end{align*}
    We use the pointwise bounds \(|\nabla A|\lesssim \frac{1}{\delta}\) and \(|\nabla \eta|\lesssim \frac{1}{r_2-r_1}\), that
    \begin{align*}
        \int_{Q(X,t,r_2)}|\nabla u|^2|w_i|^\alpha dYds=\int_{Q(X,t,r_2)}\sum_{j=1}^n |w_j|^2|w_i|^\alpha dYds \lesssim \int_{Q(X,t,r_2)}\sum_{j=1}^n |w_i|^{2+\alpha} dYds,
    \end{align*}
    and Young's inequality to get
    \begin{align*}
        \sum_{i=1}^nI_i
        &\lesssim\rho\sum_{i=1}^n\int_{Q(X,t,r_2)} |w_i|^\alpha|\nabla w_i|^2dYds
        \\
        &\qquad + C_\rho\sum_{i=1}^n\Big(\frac{1}{(r_2-r_1)^2} +\frac{1}{\delta(X,t)^2}\Big)\int_{Q(X,t,r_2)} |w_i|^{2+\alpha}dYds,
    \end{align*}
    where the first sum can be hidden on the left side with a sufficiently small \(\rho\).
    \smallskip

     For the second part of the lemma let \(0\leq \eta\leq 1\) be a smooth cut-off function with \(\eta\equiv 1\) on \(Q(X,r_1)\) and \(\eta\equiv 0\) on \(\Omega\setminus Q(X,\tilde{r})\), where \(\tilde{r}:=r_1+\frac{r_2-r_1}{2}\) and we also assume \(|\nabla \eta|(r_2-r_1)\lesssim 1\). We define also a rough cut-off in time by for \(s\in (t-r_1^2,t+ r_1^2)\)
    \[\phi(\tau)=\begin{cases}
        1& \textrm{if }\tau\in (t-r_1^2,s),\\
        \frac{\tau-t +\tilde{r}^2}{\tilde{r}^2-r_1^2} & \textrm{if }\tau\in(t-\tilde{r}^2, t-r_1^2),\\
        0 & \textrm{else}.
    \end{cases}\]
    Then we have by the Fundamental Theorem of Calculus
    \begin{align*}
        \int_{Q(X, r_1)}|w_i(Y,s)|^{2+\alpha}dY&\leq \int_{Q(X,t, \tilde{r})}\partial_t(|w_i(Y,s)|^{2+\alpha}\eta^2\phi^2) dYd\tau 
        \\
        &\leq \int_{Q(X,t, \tilde{r})}\partial_tw_i w_i|w_i(Y,s)|^{\alpha}\eta^2\phi^2 dYd\tau
        \\
        &\qquad + \frac{1}{(r_2-r_1)^2}\int_{Q(X,t, \tilde{r})}|w_i(Y,s)|^{2+\alpha}\eta^2\phi^2 dYd\tau.
    \end{align*}
    By use of the PDE for \(w_i\) we obtain for the first term
    \begin{align*}
        -\int_{Q(X,t,\tilde{r})} A\nabla w_i \nabla ( w_i|w_i|^\alpha\eta^2\phi^2)dYd\tau + \int_{Q(X,t,\tilde{r})} \partial_iA\nabla u \nabla ( w_i|w_i|^\alpha\eta^2\phi^2)dYd\tau.
    \end{align*}
    The second term can be bound as in \(II_i\) and the first one can be bounded by the left hand side of the first statement of the lemma for \(r_1=\tilde{r}\). Hence applying the already established part of the lemma on the second term yields, together with the other bounds, 
    \begin{align*}&\sum_{i=1}^n\sup_{s\in (-r_1+t,t+r_1)}\int_{Q(X,r_1)} |w_i|^{\alpha+2}dYds
    \\
    &\qquad\lesssim \sum_{i=1}^n\Big(\frac{1}{(r_2-r_1)^2}+\frac{1}{\delta(X,t)^2}\Big)\int_{Q(X,t,r_2)} |w_i|^{2+\alpha}dYds.\end{align*}

\end{proof}

Finally we can prove \reflemma{lemma:nablaUpointwiseBound}.

\begin{proof}[Proof of \reflemma{lemma:nablaUpointwiseBound}]
    We would like to apply a Moser iteration scheme. For that, we set \(\alpha_j=2\kappa^j-2, r_j:=\delta(X,t)/4+2^{-j-1}\delta(X,t)/4\) and \(\kappa:=\frac{2^*}{2}\). Furthermore let \(0\leq \eta_j\leq 1\) be a smooth cut-off function with \(\eta_j\equiv 1\) on \(Q(X,t,r_j)\) and \(\eta_j\equiv 0\) on \(\Omega\setminus Q(X,t,r_{j+1})\), where we also assume \(|\nabla \eta_j|(r_{j+1}-r_j)+|\partial_t\eta|(r_{j+1}-r_j)^2\lesssim 1\).
    
    Then we obtain for \(v_i^j:=|w_i|^{\frac{\alpha_j}{2}}w_i\eta_j\) 
    
    \begin{align*}
        &\sum_{i=1}^n\Big(\fint_{Q(X,t,r_j)} |w_i|^{(\alpha_j+2)\kappa}dYds\Big)^{\frac{2}{2\kappa}}\leq \sum_{i=1}^n\Big(\fint_{Q(X,t,r_{j+1})} |w_i|^{(\alpha_j+2)\kappa}\eta^{2\kappa}dYds\Big)^{\frac{2}{2\kappa}}
        \\
        &\qquad=\sum_{i=1}^n\Big(\fint_{t-r_{j+1}^2}^{t+r_{j+1}^2}\fint_{Q(X,t,r_{j+1})} |v_i^j|^{2^*}dYds\Big)^{\frac{2}{2^*}}
        \\
        &\qquad=\sum_{i=1}^n\Big(\fint_{t-r_{j+1}^2}^{t+r_{j+1}^2}\Big(\fint_{Q(X,r_{j+1})} |v_i^j|^{2^*}dY\Big)^{\frac{2}{2^*}}ds\Big)^{\frac{2}{2^*}} 
        \\
        &\hspace{20mm}\cdot\Big(\sup_{s\in (t-r_j^2,t+r_j^2)}\fint_{Q(X,r_{j+1})} |v_i^j|^{2^*}dY\Big)^{\frac{2}{2^*}(1-\frac{2}{2^*})}.
    \end{align*}
    By \reflemma{lemma:CaccioppoliTypeIterationInequality} and Young's inequality we bound this for a sufficiently small \(\sigma>0\) by
    \begin{align*}
        \sum_{i=1}^n C_\sigma(1+2^{2j})^{\frac{2^*}{2}} \Big(\fint_{t-r_{j+1}^2}^{t+r_{j+1}^2}\Big(\fint_{Q(X,r_{j+1})} |v_i^j|^{2^*}dY\Big)^{\frac{2}{2^*}}ds\Big)
        \\
        +  \sigma\Big(\fint_{t-r_{j+1}^2}^{t+r_{j+1}^2}\fint_{Q(X,r_{j+1})} |v_i^j|^{2^*}dY\Big)^{\frac{2}{2^*}}.
    \end{align*}
    Now, we can hide the second term on the left hand side and continue with the Sobolev inequality on the spatial cube \(Q(X,r_{j+1})\):
    \begin{align*}
        &C_\sigma(1+2^{2j})^{\frac{2^*}{2}} \sum_{i=1}^n  \Big(\fint_{t-r_{j+1}^2}^{t+r_{j+1}^2}\Big(\fint_{Q(X,r_{j+1})} |v_i^j|^{2^*}dY\Big)^{\frac{2}{2^*}}ds\Big)
        \\
        &\qquad\lesssim C_\sigma(1+2^{2j})^{\frac{2^*}{2}}\delta(X,t)^2\sum_{i=1}^n  \Big(\fint_{t-r_{j+1}^2}^{t+r_{j+1}^2}\fint_{Q(X,r_{j+1})} |\nabla v_i|^{2}dYds\Big)
    \end{align*}
    Substituting in the expression for \(v_i^j\) and using \reflemma{lemma:CaccioppoliTypeIterationInequality} yields that
    \begin{align*}
        &\sum_{i=1}^n\fint_{t-r_{j+1}^2}^{t+r_{j+1}^2}\fint_{Q(X,r_{j+1})} |\nabla v_i|^{2}dYds
        \\
        &\lesssim\sum_{i=1}^n\fint_{Q(X,t,r_{j+1})} \big(\frac{\alpha_j}{2}+1\big)^2|w_i|^{\alpha}|\nabla w_i|^2\eta_j^{2} + |w_i|^{(\alpha_j+2)}|\nabla \eta_j|^2dYds
        \\
        &\lesssim\sum_{i=1}^n\fint_{Q(X,t,r_{j+1})} \big(\frac{\alpha_j}{2}+2\big)^2|w_i|^{2+\alpha_j}\big(|\nabla\eta_j|^{2}+\frac{1}{2^{2j}\delta(X,t)^2}+\frac{1}{\delta(X,t)^2}\big)dYds.
    \end{align*} 
    All together, we have now that
     \begin{align*}
        &\sum_{i=1}^n\Big(\fint_{Q(X,t,r_j)} |w_i|^{(2+\alpha_j)\kappa}dYds\Big)^{\frac{2}{2\kappa}}
        \\
        &\leq C_\sigma(1+2^{2j})^{\frac{2^*}{2}} \delta(X,t)^2 \Big(\frac{(\frac{\alpha_j}{2}+2)^2}{(r_{j+1}-r_j)^2} + \frac{(\frac{\alpha_j}{2}+2)^2}{(\delta(X,t))^2}\Big)\sum_{i=1}^n\int_{Q(X,t,r_{j+1})} |w_i|^{2+\alpha_j}dYds.
        \\
        &\qquad\lesssim C_\sigma(1+2^{2j})^{\frac{2^*}{2}+1} (\frac{\alpha_j}{2}+2)^2\sum_{i=1}^n\fint_{Q(X,t,r_{j+1})} |w_i|^{2+\alpha_j}dYds.
    \end{align*}
    
    If we set
    \[W_j^{(i)}:=\sum_{i=1}^n\Big(\fint_{B(X,t,r_j)} |w_i|^{2\kappa^j}dYds\Big)^{\frac{1}{2\kappa^j}},\]
    then the previous observation yields
    \[W_{j+1}^{(i)}\lesssim \Big( C_\sigma(1+2^{2j})^{\frac{2^*}{2}+1} (\frac{\alpha_j}{2}+2)^2 \Big)^{\frac{1}{2\kappa^j}}W_{j}^{(i)}.\]

    Iteration leads to
    \begin{align*}
    \sum_{i=1}^n\sup_{B(X,t,\delta(X,t)/4)} |w_i|\lesssim \sum_{i=1}^n \prod_{j=1}^\infty \Big( C_\sigma(1+2^{2j})^{\frac{2^*}{2}+1} (\frac{\alpha_j}{2}+2)^2 \Big)^{\frac{1}{2\kappa^j}} W_1^{(i)},
    \end{align*}

    where the product rewrites to the exponential function of  
    \[\sum_{j=1}^\infty\frac{1}{2\kappa^j}\ln\Big( C_\sigma(1+2^{2j})^{\frac{2^*}{2}+1} (\frac{2\kappa^j-2}{2}+2)^2 \Big).\]
    Since this sum converges against a fixed value, we obtain with the usual Caccioppoli and Harnack inequality for \(u\)
    \begin{align*}
    &\sup_{B(X,t,\delta(X,t)/4)} |\nabla u|\lesssim \sum_{i=1}^n\sup_{B(X,t,\delta(X,t)/4)} |w_i|\lesssim \sum_{i=1}^n\Big(\fint_{B(X,t,\delta(X,t)/2)} |w_i|^2 dYds\Big)^{1/2} 
    \\
    &\qquad\lesssim (\fint_{B(X,t,\delta(X,t)/2)} |\nabla u|^2 dYds\Big)^{1/2}
    \lesssim \frac{1}{\delta(X,t)}(\fint_{B(X,t,3\delta(X,t)/4)} |u|^2 dYds\Big)^{1/2}
    \\
    &\qquad\lesssim \frac{\sup_{B(X,t,3\delta(X,t)/4)} |u|}{\delta(X,t)},
    \end{align*}

\end{proof}

\subsection{Approximation argument in \eqref{eq:Neumannsomething}}\label{subsection:appendix2}

To make \eqref{eq:Neumannsomething} rigorous, we note that
\begin{align}
    \eqref{eq:Neumannsomething}=&\int_\Omega -u_1\partial_t\phi + A_1\nabla u_1 \cdot\nabla\phi dXdt\nonumber
    \\
    &= \int_\Omega -u_0\partial_t\phi + A_1\nabla u_0 \cdot\nabla\phi dXdt\nonumber
     +  \int_\Omega -F\partial_t\phi+ A_1\nabla F \cdot\nabla\phi dXdt\nonumber
    \\
    &= \int_\Omega -u_0\partial_t\phi + A_0\nabla u_0 \cdot\nabla\phi dXdt\nonumber
     + \int_\Omega \EPS \nabla u_0 \cdot\nabla\phi dXdt\nonumber
    \\
    &\qquad +\int_\Omega -F\partial_t\phi + A_1\nabla F \cdot\nabla\phi dXdt.\label{eq:Neumannequation1}
\end{align}
The first term is the PDE for \(u_0\) and will give us the term containing the Neumann data \(g\). The second term is also what we expect on the right hand side of \eqref{eq:Neumannsomething}. Hence it remains to study the last term in detail. We can find sequences \((A_l)_l, (\EPS_m)_m\subset C^\infty(\Omega)\) and \((u_m)_m\subset C_C^\infty(\Omega)\) such that \(A_l\to A_1\) in \(L^2(\Omega\cap\mathrm{supp}(\phi))\), \(u_m\to u\) in \(\dot{E}\), and \(\EPS_m\nabla u_m\to \EPS\nabla u_0\) for in \(L^2(\Omega)\). With these approximations in hand we will produce error terms, which we will call \(R_1,...,R_6\) and which we will discuss in the end. But first, we have that
\begin{align*}
    &\int_\Omega -F\partial_t\phi + A_1\nabla F \cdot\nabla\phi dXdt
    \\
    &=\int_\Omega -F\partial_t\phi dXdt + \int_\Omega(A_1-A_l)\nabla F \cdot\nabla\phi dXdt
    + \int_\Omega A_l^*\nabla\phi \cdot \nabla F dXdt
    \\
    &=:I + R_1 + J 
\end{align*}
For \(I\), we insert the definition of \(F\) (see \reflemma{lemma:defF}) and obtain by Fubini
\begin{align*}
    I&=-\int_\Omega\Big(\int_\Omega\EPS(Y,s)\nabla u_0(Y,s)\cdot \nabla_Y G_1(Y,s,X,t) dYds\Big)\partial_t\phi(X,t)dXdt
    \\
    &=-\int_\Omega \Big(\int_\Omega\EPS_m(Y,s)\nabla u_m(Y,s)\cdot \nabla_Y G_1(Y,s,X,t) dYds\Big)\partial_t\phi(X,t)dXdt
    \\
    &\qquad -\int_\Omega \Big(\int_\Omega(\EPS\nabla u_0-\EPS_m\nabla u_m)(Y,s)\cdot \nabla_Y G_1(Y,s,X,t) dYds\Big)\partial_t\phi(X,t)dXdt
    \\
    &=-\int_\Omega \EPS_m(Y,s)\nabla u_m(Y,s)\cdot \nabla_Y \Big(\int_\Omega G_1(Y,s,X,t) \partial_t\phi(X,t)dXdt\Big) dYds + R_2.
\end{align*}
We call the first term \(\tilde{I}\).

For \(J\) we have also by Fubini
\begin{align*}
    J&=-\int_\Omega \mathrm{div}_X(A_l^*(X,t)\nabla\phi(X,t)) \Big(\int_\Omega\EPS(Y,s)\nabla u_0(Y,s)\cdot \nabla_Y G_1(Y,s,X,t) dYds\Big) dXdt
    \\
    &=-\int_\Omega \mathrm{div}_X(A_l^*(X,t)\nabla\phi(X,t)) \Big(\int_\Omega\EPS_m(Y,s)\nabla u_m(Y,s)\cdot \nabla_Y G_1(Y,s,X,t) dYds\Big) dXdt
    \\
    &\qquad -\int_\Omega \mathrm{div}_X(A_l^*(X,t)\nabla\phi(X,t)) \Big(\int_\Omega(\EPS\nabla u_0-\EPS_m\nabla u_m)(Y,s)\cdot \nabla_Y G_1(Y,s,X,t) dYds\Big) dXdt
    \\
    &=-\int_\Omega \mathrm{div}_Y(\EPS^m\nabla u_m)(Y,s)\cdot\Big(\int_\Omega A_l^*(X,t)\nabla\phi(X,t)\cdot \nabla_X G_1(Y,s,X,t) dXdt\Big) dYds
    \\
    &\qquad -\int_\Omega \mathrm{div}_X(A_l^*(X,t)\nabla\phi(X,t)) \Big(\int_\Omega(\EPS\nabla u_0-\EPS_m\nabla u_m)(Y,s)\cdot \nabla_Y G_1(Y,s,X,t) dYds\Big) dXdt
    \\
    &=-\int_\Omega \mathrm{div}_Y(\EPS_m\nabla u_m)(Y,s)\cdot\Big(\int_\Omega A_1^*(X,t)\nabla\phi(X,t)\cdot \nabla_X G_1(Y,s,X,t) dXdt\Big) dYds
    \\
    &\qquad -\int_\Omega \mathrm{div}_Y(\EPS_m\nabla u_m)(Y,s)\cdot\Big(\int_\Omega (A_l^*-A_1^*)(X,t)\nabla\phi(X,t)\cdot \nabla_X G_1(Y,s,X,t) dXdt\Big) dYds
    \\
    &\qquad -\int_\Omega \mathrm{div}_X(A_l^*(X,t)\nabla\phi(X,t)) \Big(\int_\Omega(\EPS\nabla u_0-\EPS_m\nabla u_m)(Y,s)\cdot \nabla_Y G_1(Y,s,X,t) dYds\Big) dXdt
    \\
    &=:\tilde{J}+R_3+R_4.
\end{align*}

Now we obtain in total
\begin{align*}
    \tilde{I}+\tilde{J}&=\int_\Omega \EPS_m(Y,s)\nabla u_m(Y,s)\cdot\nabla_Y\Big(\int_\Omega L_1G_1(Y,s,X,t) \phi(X,t) dXdt\Big) dYds
    \\
    &=\int_\Omega \EPS_m(Y,s)\nabla u_m(Y,s)\cdot\nabla_Y\Big(\int_\Omega L_1G_1(Y,s,X,t) (\phi-v)(X,t) dXdt\Big) dYds
    \\
    &\qquad + \int_\Omega \EPS_m(Y,s)\nabla u_m(Y,s)\cdot\nabla_Y\Big(\int_\Omega L_1G_1(Y,s,X,t)  v(X,t) dXdt\Big) dYds
    \\
    &=\int_\Omega \EPS(Y,s)\nabla u_0(Y,s)\cdot\nabla_Y(\phi-v)(Y,s) dYds
    \\
    &\qquad +\int_\Omega (\EPS_m\nabla u_m-\EPS\nabla u_0)(Y,s)\cdot\nabla_Y(\phi-v)(Y,s) dYds
    \\
    &\qquad + \int_\Omega \EPS_m(Y,s)\nabla u_m(Y,s)\cdot\nabla_Y\Big(\int_\Omega L_1G_1(Y,s,X,t)  v(X,t) dXdt\Big) dYds
    \\
    &=\int_\Omega \EPS(Y,s)\nabla u_0(Y,s)\cdot\nabla_Y(\phi-v)(Y,s) dYds + R_5+R_6.
\end{align*}

Plugging this back into \eqref{eq:Neumannequation1} yields by use of the PDE for \(u_0\)
\begin{align*}
    \eqref{eq:Neumannequation1}&= \int_\Omega -u_0\partial_t\phi + A_0\nabla u_0 \cdot\nabla\phi dXdt + \int_\Omega \EPS \nabla u_0 \cdot\nabla\phi dXdt\nonumber
    \\
    &\qquad +\int_\Omega -F\partial_t\phi + A_1\nabla F \cdot\nabla\phi dXdt
    \\
    &=\int_{\partial\Omega} g\phi dXdt +\int\EPS\nabla u_0\cdot v dXdt+\sum_{i=1}^6 R_i.
\end{align*}
Thus, if we can show that \(R_1\) to \(R_6\) can be made arbitrarily small, then we obtain \eqref{eq:Neumannsomething}.
\medskip

We begin with
\[R_1\lesssim \Vert \nabla F\Vert_{L^2(\Omega)}\Vert (A_1-A_l)\nabla\phi\Vert_{L^2(\Omega)}\leq \Vert F\Vert_E \Vert\nabla\phi\Vert_{L^\infty}\Vert A_1-A_l\Vert_{L^2(\Omega\cap\mathrm{supp}(\phi))}\to 0\]
when \(l\) tends to \(\infty\). 
For \(R_2\) we observe that if we set \(w\in \dot{E}(\Omega)\) to be the solution to \(L_1^*w=\partial_t\phi\) with zero Dirichlet boundary data, then
\begin{align*}
    R_2&=-\int_\Omega (\EPS\nabla u_0-\EPS_m\nabla u_m)(Y,s)\cdot \nabla_Y \Big(\int_\Omega G_1(Y,s,X,t) \partial_t\phi(X,t)dXdt\Big) dYds
    \\
    &=-\int_\Omega (\EPS\nabla u_0-\EPS_m\nabla u_m)(Y,s)\cdot \nabla_Y w(Y,s) dYds
    \\
    &\lesssim \Vert \EPS\nabla u_0-\EPS_m\nabla u_m\Vert_{L^2}\Vert \nabla w\Vert_{L^2}.
\end{align*}
Since \(\Vert \nabla w\Vert_{L^2}\lesssim C(\phi)<\infty\), the right hand side converges to zero if we take the limit in \(m\).
Analogously, if we set \(w_l\) to be the solution to \(L_1^*w_l=\mathrm{div}_X(A^*_l\nabla\phi)\) with zero Dirichlet boundary data, then
\begin{align*}
    R_4&\lesssim \Vert \EPS\nabla u_0-\EPS_m\nabla u_m\Vert_{L^2}\Vert \nabla w_l\Vert_{L^2},
\end{align*}
which converges to zero when we fix \(l\) and take the limit in \(m\). Furthermore, we obtain for \(\tilde{w}_m\in \dot{E}(\Omega)\) satisfying \(L_1\tilde{w}_m=\mathrm{div}_X(\EPS_m\nabla u_m)\) with zero Dirichlet boundary data
\begin{align*}
    R_3&=\int_\Omega (A_l^*-A_1^*)(Y,s)\nabla\phi(Y,s)\cdot \nabla_X \Big(\int_\Omega G(Y,s,X,t) \mathrm{div}(\EPS_m\nabla u_m)(Y,s)dYds\Big) dXdt
    \\
    &\lesssim \Vert A_l-A_1\Vert_{L^2}\Vert \nabla\phi\Vert_{L^\infty}\Vert \nabla\tilde{w}_m\Vert_{L^2}\lesssim \Vert A_l-A_1\Vert_{L^2}\Vert \nabla\phi\Vert_{L^\infty}\Vert \EPS_m \nabla u_m\Vert_{L^2}.
\end{align*}
The inequality \(\Vert \nabla\tilde{w}_m\Vert_{L^2}\lesssim\Vert \EPS_m \nabla u_m\Vert_{L^2}\) follows from Section \ref{subsection:EnergySol} by standard arguments using the hidden coercivity of the sesquilinear form.
Since \(\EPS_m\nabla u_m\to\EPS\nabla u_0\) when \(m\) approaches \(\infty\), taking the limit in \(m\) yields a bound of \(C\Vert A_l-A_1\Vert_{L^2}\Vert \EPS \nabla u_0\Vert_{L^2}\lesssim \Vert A_l-A_1\Vert_{L^2}\) which converges to zero if \(l\) approaches \(\infty\).
For \(R_5\) we have the immediate bound
\[R_5\lesssim \Vert \EPS\nabla u_0-\EPS_m\nabla u_m\Vert_{L^2}\Vert \nabla(\phi-v)\Vert_{L^2}\]
which converges to zero when taking the limit in \(m\).
Lastly, for \(R_6\) we note that
\begin{align*}
    R_6&=\int_\Omega \mathrm{div}_Y(\EPS_m\nabla u_m)(Y,s) 
    \\
    &\qquad \cdot\Big(\int_\Omega A_1(X,t) \nabla_X G(Y,s,X,t)\cdot \nabla v(X,t)-G(Y,s,X,t)\partial_tv(X,t) dXdt\Big) dYds
    \\
    &=\int_\Omega A_1(X,t) \nabla_X \Big(\int_\Omega G(Y,s,X,t)\mathrm{div}_Y(\EPS_m\nabla u_m)(Y,s)dYds\Big)\cdot \nabla v(X,t)
    \\
    &\qquad -\Big(\int_\Omega G(Y,s,X,t)\mathrm{div}_Y(\EPS_m\nabla u_m)(Y,s)dYds\Big)\partial_tv(X,t)  dYds.
\end{align*}
Letting \(\hat{w}\in \dot{E}(\Omega)\) be the solution to \(L_1\hat{w}=\mathrm{div}(\EPS_m\nabla u_m)\) with zero Dirichlet boundary data, allows us to rewrite this to
\begin{align*}
    &=\int_\Omega A_1(X,t) \nabla \hat{w}(X,t)\cdot \nabla v(X,t)-\hat{w}(X,t)\partial_tv(X,t) dXdt
    \\
    &=\int_\Omega \hat{w}(X,t)L_1^*v(X,t)dXdt.
\end{align*}
Since \(\hat{w}\) vanishes at the boundary and \(v\) satisfies the PDE \eqref{vDefinedNeumann}, this term vanishes. Overall we obtain that if we first take the limit in \(m\) and then in \(l\) all rest terms \(R_1-R_6\) vanish. This establishes \eqref{eq:Neumannsomething}.

\bibliographystyle{alpha}
\addcontentsline{toc}{section}{References}
\bibliography{references} 
\end{document}